\documentclass[11pt]{article}
\usepackage{amsfonts}
\usepackage{mathrsfs}
\usepackage{amsthm}
\usepackage{graphicx}
\usepackage{multirow}
\usepackage{color}
\usepackage{array}
\usepackage{amsmath}
\usepackage{relsize}
\usepackage{cite}
\usepackage{threeparttable}
\usepackage[capposition=bottom,captionskip=1pt]{floatrow}
\usepackage{setspace}
 \usepackage{diagbox}

\newtheorem{theorem}{Theorem}

\usepackage{subcaption} 
\usepackage{booktabs}

\newcommand{\g}{\lambda}               

\newtheorem{lem}{Lemma}
\newtheorem{pro}{Proposition}
\newtheorem{cor}{Corollary}

\def\bsk{\vskip .5cm\noindent}

\def\hsk{\vskip .2cm\noindent}

\begin{document}
\baselineskip 18pt

\title{
{\Large\bf 
Optimal Control of a Non-Stationary Inventory-Queue,\\
with Applications to EV-Battery Swapping}
}
\author{
Hanyu Cheng\thanks{Department of Industrial Engineering,
Tsinghua University.
hanyucheng2026@gmail.com.}
\; \, and \,
David D.\ Yao\thanks{Department of Industrial Engineering and Operations Research,
Columbia University. yao@columbia.edu.}
}

\vskip 1.5cm
\date{September 2026}

\maketitle

\vskip 2cm

\begin{abstract}  

\vskip 0.8cm

We study a non-stationary queueing system with multiple parallel servers and a finite buffer capacity, over a finite planning horizon.  
Motivated by an operation that is central to an EV (electric vehicle) battery-swapping station --- how to best manage
the charging of batteries swapped off from EVs, we develop a continuous-time MDP (Markov decision process) approach to the problem.
The main technical challenge is to allow both the arrival rate and the service-cost rate to be functions of 
time with jumps at discrete time points, key features of the EV swapping station's operation.     
We derive an exact and explicit threshold structure  that characterizes the optimal control policy.
Moreover, we give conditions under which the thresholds are monotone over time, and with respect to key system parameters such as
service and buffer capacities.   
Extensive numerical experiments are carried out to supplement as well as validate the theoretical results, and also to illustrate the robustness 
of the threshold structure in practical implementations.  

\end{abstract}

\bigskip
\textbf{Key words:} 
continuous-time MDP, inventory replenishment, base-stock control.



\thispagestyle{empty}

\newpage

\baselineskip 18pt

\section{Introduction}

We study a {\it non-stationary} queueing system over a {\it finite} time horizon, $t\in [0,T]$, with $K\ge 1$ 
parallel servers and a buffer capacity of $N\ge K$. 
Jobs requiring service arrive at the system following a non-stationary Poisson process with rate $\g (t)$. Any job that arrives at a time when the system is full (i.e., occupied by $N$ jobs, including both in service and in waiting) will be blocked and lost, with a penalty cost $\pi>0$. 
Service times follow an exponential distribution, i.i.d.\ among the servers; and the service-cost rate $e(t)$ --- per time-unit cost charged to each server's 
time while actively serving --- is also time-dependent.  

Our goal is to find an optimal policy that specifies the number of active servers at any time $t \in [0,T]$, so as to minimize the
expected total cost over the entire horizon $[0,T]$ --- service cost plus penalty cost for any lost demand. 

In terms of applications, we are primarily motivated by optimizing an operation that is central to an EV (electric vehicle) battery-swapping station:
The station keeps a fixed number of $N$ batteries, some of which, at any time $t$, can be depleted (e.g., swapped off from an EV, and waiting to be charged); others can be fully charged (and hence ready to supply the demand of any arriving EV that requires battery swapping), or in the process of being charged. The station is equipped with $K$ chargers (``servers''); hence, can charge up to $K$ batteries at any time. Viewed this way, the station is maintaining an inventory of up to $N$ fully-charged batteries; and the process of charging one or more batteries is effectively the process of inventory replenishment. On the other hand, if/when all $N$ batteries are either depleted or in the process of being charged (and hence, are not ready to supply demand), any arriving EV will leave the station, resulting in a lost demand. This corresponds to the case of ``blocking'' (due to a full buffer) in the queueing context. 

Since the demand arrival rates $\g(t)$ and the charging-cost rate $e(t)$ (the cost rate of electricity) are time-dependent --- both are key features of the EV battery-swapping station's operation, it becomes highly non-trivial what the optimal charging (or, inventory-replenishment) strategy is. 
To find the optimal strategy as stated above, we will pursue a stochastic control approach, more specifically, a continuous-time MDP (Markov decision process). 
     
Our study is also motivated by the work of Yang, Yao and Ye (2020) \cite{yangyaoye}, where the focus is on a broad class of so-called ``inventory-queues'' for which the optimal control, for a {\it long-run average} objective, has a threshold structure. (An inventory-queue is a system where the server(s) can still be active even when the system is empty, so as to produce finished products to supply future demand. Thus, the system operates in a mixed make-to-order and make-to-stock modes.) The paper shows that this threshold structure not only applies to an inventory-queue when it operates as a continuous-time Markov chain, but also to the diffusion limit of the original queue when it is non-Markovian (e.g., with renewal arrivals and non-exponential service times). One of our objectives here is to explore similar threshold structures, in a non-stationary setting along with a finite time horizon, as motivated by the EV application stated above. With the results highlighted below, we believe we have successfully completed this first step.       
     
\subsubsection*{Related Literature}

The related literature on service control/optimization in queues predominantly concerns stationary models, e.g., Stidham and Weber (1989) \cite{stidham}, and George and Harrison (2001) \cite{harrison},
with a focus on monotone optimal policies; refer to Glasserman and Yao (1994) \cite{glassermanyao} (Chapter 6). The problem becomes much more difficult under non-stationarity and known results are limited. For instance, Fu, Marcus and Wang (2000) \cite{fu} establish monotone staffing policies for a transient multi-period queue, but staffing decisions are made only at period boundaries. Miller (2009) \cite{miller} studies access and service-rate control over a finite horizon with non-stationary arrivals, and provides a dynamic-programming characterization and numerical solution procedure, but does not derive any explicit structural characterization of the state-dependent service policy. 

The main technical challenge is that non-stationarity and the terminal (end-of-horizon) effect couple the evolution of the value function and the optimal policy over time, making it difficult to characterize the optimal policy analytically. 
For non-stationary finite-buffer $M_t/M/c/N$ queues,
a substantial body of research develops approximations for performance evaluation, e.g., Jagerman (1975) \cite{jagerman} and Massey and Whitt (1996) \cite{massey}, as analytical results under non-stationary arrivals are generally difficult to obtain due to the transient evolution of the system, see Davis, Massey and Whitt (1995) \cite{davis}.


This challenge clearly carries over to the EV battery-swapping problem.
For example, Qi, Zhang and Zhang (2023) \cite{qi},  and Ai, Cheng and Qi (2026) \cite{ai} integrate steady-state/stationary queueing theory and inventory analysis with long-term station-location and capacity decisions. Yet, in practice demand (EV traffic) certainly varies, over time of the day and days of the week, and so does charging cost. Thus, operational decisions such as when to charge depleted batteries and what the right number of fully charged batteries (``inventory level'') to keep at the swapping station is clearly require non-stationary models, and often over a finite horizon (e.g., a month or a quarter) which may consist of several similar cycles (e.g., weeks).
 Studies in this regard appear rather rare. A notable exception is a recent work: Li {\it et al} (2026) \cite{li} study the optimal charger-activation policy over a finite time horizon, and derive the optimal policy with a two-threshold structure, similar to our study. The differences are, a) theirs is a discrete-time model ($t=1,2,\dots, T$), with a deterministic single-period charging time; b) they consider a dual source of power for charging: from solar panel (locally installed at the station) and from the power grid, which is a feature not considered in our model.

Notably, our basic model (detailed in \S\ref{sec:optcontrol} below) goes beyond the $M_t/M/c/N$ queues in that we allow the service-cost rate to be time-dependent, as well as the arrival rate. Moreover, we allow these time-dependent rates to have jumps at discrete time points over the horizon, so as to capture key features, such as higher rush-hour traffic and lower electricity cost for off-peak (e.g., overnight) charging, that are critical in the applications that motivate our study. Indeed, these discontinuities are among the main technical challenges in our analyses.   

\subsubsection*{Key Contributions} 

\begin{itemize}

\item Applying continuous-time MDP to study the optimal control of a non-stationary inventory-queue over a finite time horizon, we obtain an exact and explicit threshold structure (Theorem \ref{theo:threshold}) that characterizes the optimal control policy, using a combination of  Dynkin's formula and Nagumo's theorem. 
Specifically, the optimal replenishment strategy, for each time-state pair, has a
bang-bang structure: either replenish all idle/depleted units (up to $K$) or no unit at all. Furthermore, the
optimal control exhibits a (stronger) threshold structure, i.e., activating the replenishment process
if and only if the number of depleted units have reached (or exceeded) a threshold $i^{\min}_t$ (which only
depends on time). 

\item We give both necessary and sufficient conditions (in Theorem \ref{theo:threshold_monotonicity}), based on
comparing the charging-cost rate versus the demand rate, for the threshold $i^{\min}_t$ to be increasing (non-decreasing) in $t$.
In addition, we also show (in Theorem \ref{theo:capacity_control_interaction}) the threshold's monotonicity w.r.t.\ system capacities ($K$ and $N$).

\item For non-exponential charging times, we use the phase-type (PH) distribution, which is
known to be dense in the class of distributions of non-negative valued random variables. Since the
PH distribution can be represented as the absorbing time of a Markov chain, the MDP framework
still applies, and numerical experiments demonstrate that the optimal control still exhibits a threshold structure, thus, supporting the robustness of the structure.

\end{itemize}

The rest of the paper is organized as follows. In \S\ref{sec:optcontrol} we first derive the bang-bang structure of the optimal control and then
strengthen it to a threshold structure (as highlighted above). We then continue to reveal in \S\ref{sec:monotonicity} the monotonicity of thresholds 
both over time and in terms of the capacity parameters, the number of servers $K$ and the buffer capacity $N$. 
\S\ref{sec:cyclic_model} focuses on the case when the planning horizon $[0,T]$ consists of cycles (such as peak-to-off-peak), where we reveal certain cyclic monotonicity of the thresholds.
In \S\ref{sec:extension} we extend the exponential charging-time distribution to the general class of phase-type distributions, for which the threshold structure of the optimal control is difficult to pin down analytically, but readily demonstrated numerically in the extensive numerical experiments detailed in \S\ref{sec:numerical}. Concluding remarks are summarized in \S\ref{sec:conclusion}.

\section{Optimal Replenishment Control}
\label{sec:optcontrol}

\subsection{Model and Problem Formulation}
\label{sec:model}

We study an inventory system of $N$ units, where $N\ge 1$ is a given integer. At any time, each of the $N$ units is either in an ``active/ready'' state or in an ``idle/not ready'' state, according to whether or not it can be used to supply a demand. In the earlier motivating example of an EV battery swapping station, $N$ corresponds to the total number of batteries at the station: some are fully charged, ready to supply any incoming EV that requires a swap; others are being charged or waiting to be charged. Hence, the charging process corresponds to inventory replenishment.

Time is continuous: $t\in [0,T]$, with $T>0$ given and finite; and we shall refer to $[0,T]$ as the planning horizon. Let $I(t)$ denote the number of idle/not ready units, which will be treated as a state variable, with state space
$
\mathcal{S}=\{0,1,\ldots,N\}.
$
Specifically, when $I(t)=i$, there are 
$N-i$ units ready to satisfy demand. To the extent that $N$ is the maximal number of ready units, this corresponds to what is traditionally called a ``base-stock'' inventory replenishment system, with $N$ being the base-stock level. Note that any one of the $I(t)$ idle units may or may not be under replenishment, depending on the replenishment strategy.

Demands arrive at the system following a non-stationary Poisson process with time-varying intensity $\lambda(t)$, where $\lambda(t)$ is deterministic, bounded, right-continuous, $\inf_{t\in[0,T]} \lambda(t)>0$, and continuously differentiable except at finitely many time points. Each arriving demand requires one ready unit. If $I(t)=i<N$, the demand is fulfilled immediately, and the state changes from $i$ to $i+1$; if $I(t)=N$, i.e., no ready unit is available, the demand is lost and the state remains unchanged. In the EV example, these correspond to swapping times being negligible (demand fulfilled immediately), and no EV will wait for swapping if there's no ready battery (lost demand when inventory is empty).

The idle units can be replenished simultaneously (i.e., in parallel), up to $K$ at any time, where $K\ge 1$ is a given integer.
Thus, $K$ can represent the number of battery chargers installed at the swapping station.
In this application context, we shall refer to a charger as a ``server'', and an ``active server'' means a charger that is in the process of charging a battery.
In the more general inventory setting, an active server corresponds to a unit of the depleted inventory being replenished (by a ``server'') -- a replenishment order has been issued and the unit is on its way.

 For each idle unit, the replenishment time follows an independent and identically distributed (iid) exponential distribution with rate $\mu$, which is also independent of the demand process.
The replenishment control is to decide the number of active servers from the following feasible set:
\def\ik{{ i\wedge K}}
\begin{equation}
\label{Uset}
\mathcal{U}(i)=\{0,1,\ldots, \ik\},
\end{equation}
where $\wedge$ denotes the $\min$ operator.
A control (or ``action''),  $u(t)=k$, means activating $k\le K$ servers. Under this action, replenishment completions occur at an aggregate rate $k\mu$, and each completion decreases the state from $i$ to $i-1$. By measuring time in units of the mean replenishment time, we normalize $\mu=1$ without loss of generality. Server activation and deactivation are assumed to be instantaneous and incur no additional (switching) cost. A control $u=\{u(t):t\in[0,T]\}$ is admissible if it is non-anticipative (i.e., independent of any future information) and satisfies $u(t)\in\mathcal{U}(I(t))$ at all times.


Under an admissible control, the state process $\{I(t):t\in[0,T]\}$ is a controlled birth-death process, of which the infinitesimal generator $Q(t,u)=[q_{ij}(t,u)]_{i,j=0}^N$ takes the following form. Its (non-zero) off-diagonal elements of the infinitesimal generator are:
\begin{equation}
\label{eq:transition_rates}
q_{i,i+1}(t,u)=\lambda(t),
\qquad
q_{i,i-1}(t,u)=u ;
\end{equation}
and the diagonal elements are:
\[
q_{ii}(t,u)=-\sum_{j\ne i}q_{ij}(t,u).
\]
(All other entries of $Q(t,u)$ are zero). An arrival in state $N$ leaves the state unchanged (hence not affecting $Q$). As this corresponds to a lost demand, it will incur a penalty cost $\pi>0$.

Let $e (t):[0,T]\to(0,\infty)$ denote the operating cost per active server per time unit (the charging cost rate).
Same as the assumptions on the arrival rate  $\lambda (t)$, we assume the function $e (t)$ is deterministic, bounded, right-continuous, $\inf_{t\in[0,T]} e(t)>0$, and continuously differentiable except at finitely many time points.
Thus, when the system is in state $i$ and action $u$ is chosen, the expected instantaneous cost rate is
\begin{equation}
\label{eq:running_cost}
c(t,i,u)=e(t)u+\pi\lambda(t)\mathbf{1}_{\{i=N\}}.
\end{equation}

For an admissible control $u$, denote the (expected) cost-to-go function by
\begin{equation}
\label{eq:policy_cost}
J^u(t,i)
=\mathbb{E}\left[
\int_t^T c\bigl(s,I(s),u(s,I(s))\bigr)\,ds
\right].
\end{equation}
This definition applies to $(t,i)\in[0,T]\times\mathcal{S}$, where the expectation is with respect to the state $I(s)$.
Moreover, denote the value function as follows, along with the terminal condition:
\begin{equation}
\label{eq:value_function}
\begin{aligned}
V(t,i)&:=\inf_{u}J^u(t,i),
&& (t,i)\in[0,T]\times\mathcal{S},\\
V(T,i)&:=0,
&& i\in\mathcal{S}.
\end{aligned}
\end{equation}

Finding the optimal replenishment strategy is to dynamically adjust the number of active servers -- call it ``activation strategy'' -- based on comparing the (current) operating cost rate $e(t)$ against the (future) cost {\it marginal} (of having one fewer ready unit), with the latter denoted by
\begin{equation}
\label{delta}
\begin{aligned}
\Delta_0(t)&:=V(t,0),\\
\Delta_i(t)&:=V(t,i)-V(t,i-1),
&& i=1,\ldots,N.
\end{aligned}
\end{equation}

Having set up the model and the control problem, in what follows we will first show in \S\ref{sec:bangbang} that the optimal activation strategy, for each time-state pair, has a bang-bang structure: either replenish all idle units (up to $K$) or no unit at all. We then examine in \S\ref{sec:state_threshold} how this decision varies across states and establish a threshold on the number of idle units.

\subsection{Bang--Bang Structure}
\label{sec:bangbang}

\begin{pro}
\label{pro:optimal_control}
{\rm
For the optimal control problem in (\ref{eq:running_cost}), (\ref{eq:policy_cost}) and (\ref{eq:value_function}),
there exists an optimal Markovian policy $u^*$:
\begin{equation}
\label{eq:optimal_control}
u^*(t,i)=
\begin{cases}
\ik , & \text{if }\Delta_i(t)>e(t),\\
0, & \text{if }\Delta_i(t)\le e(t),
\end{cases}
\end{equation}
where $\Delta_i(t)$ follows (\ref{delta}).
\hfill$\Box$
}
\end{pro}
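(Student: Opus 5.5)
The plan is to work with the finite-state Hamilton--Jacobi--Bellman (HJB) system of ODEs. Because the state space $\mathcal{S}$ is finite and the rates are bounded and piecewise $C^1$, this is the natural tool. The candidate value function solves, backward from $V(T,\cdot)=0$,
\begin{equation*}
-\frac{\partial V}{\partial t}(t,i)=\min_{u\in\mathcal{U}(i)}\Bigl\{c(t,i,u)+\sum_{j}q_{ij}(t,u)V(t,j)\Bigr\}.
\end{equation*}
Using (\ref{eq:transition_rates}) and (\ref{eq:running_cost}), the bracket equals
\begin{equation*}
\pi\lambda(t)\mathbf{1}_{\{i=N\}}+\lambda(t)\bigl(V(t,i+1)-V(t,i)\bigr)\mathbf{1}_{\{i<N\}}+u\bigl(e(t)-\Delta_i(t)\bigr).
\end{equation*}
The control therefore enters linearly through the single coefficient $e(t)-\Delta_i(t)$. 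Minimizing a linear function over $\{0,1,\ldots,\ik\}$ is attained at an endpoint: $u=\ik$ if $\Delta_i(t)>e(t)$, and $u=0$ otherwise. The tie $\Delta_i(t)=e(t)$ can be broken toward $0$ without changing the minimum. This is exactly (\ref{eq:optimal_control}). (For $i=0$ the set $\mathcal{U}(0)=\{0\}$, so the formula is consistent trivially.)

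The steps are as follows. First, I will show the HJB system has a unique absolutely continuous solution $W(t,i)$ on $[0,T]$. The right-hand side is a minimum of finitely many affine functions of $W$ with bounded, piecewise-continuous coefficients, hence globally Lipschitz in $W$ and measurable in $t$. A Carathéodory existence-uniqueness theorem then gives the solution, or one can solve classically on each subinterval between the finitely many jump points of $\lambda,e$ and glue by continuity. Second, I will verify that $W=V$ and that $u^*$ is optimal by a Dynkin-formula verification argument. For any admissible $u$, apply Dynkin's formula to $W(s,I(s))$ on $[t,T]$. This is legitimate because $W$ is absolutely continuous in $s$ with bounded derivative and the jump rates are bounded. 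The HJB inequality $\partial_s W+c+Q W\ge 0$ then gives $J^u(t,i)\ge W(t,i)$. Under the Markov policy $u^*$ the inequality becomes an equality a.e., so $J^{u^*}=W$. Hence $V=W$, $\Delta_i(t)$ is computed from $W$, and $u^*$ is an optimal Markovian policy.

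The main obstacle is technical rather than structural: handling the discontinuities of $\lambda(t)$ and $e(t)$. These make $\partial_t V$ fail to exist at finitely many points, and they also make the policy $u^*(t,i)$ only piecewise defined in $t$. That raises the question of whether $u^*$ generates a well-defined controlled chain. I would address this by noting two facts. First, $\Delta_i(t)-e(t)$ is piecewise continuous, so the switching set is measurable. Second, the generator $Q(t,u^*(t,\cdot))$ is bounded and measurable in $t$, so the Kolmogorov forward equations have a unique absolutely continuous solution defining the process. Dynkin's formula only needs the HJB equation to hold for almost every $t$ together with absolute continuity of $W$, so the finitely many jump times (a null set) do no harm. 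If needed, I would instead run the verification separately on each interval between jumps and chain the pieces using continuity of $V$ in $t$.
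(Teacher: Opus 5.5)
Your proposal is correct and takes essentially the paper's route: solve the Lipschitz HJB system on each continuity interval of $\lambda$ and $e$ and glue by continuity, then use Dynkin's formula (interval by interval, or a.e.\ as you note) to get $W\le J^u$ for every admissible $u$ and $W=J^{u^*}$. The bang--bang rule then follows because $u$ enters linearly through $e(t)-\Delta_i(t)$, with ties broken toward $0$; your remark on well-posedness of the chain under the piecewise-defined Markov policy is a detail the paper leaves implicit, and it does not change the argument.
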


Intuitively, since $\Delta_i(t)$ represents the marginal continuation cost of having one fewer ready unit available to satisfy demand, if $\Delta_i(t)>e(t)$ it certainly makes sense to activate all available servers.

To derive the optimal control in (\ref{eq:optimal_control}) from the HJB (Hamilton-Jacobi-Bellman) equations,
the technical challenge is to handle the discontinuities of both $\lambda (t)$ and $e(t)$, which are essential features of our model. Specifically, let
\begin{equation}
\label{jump}
0=\tau_0<\tau_1<\cdots<\tau_m<\tau_{m+1}=T
\end{equation}
be all the discontinuity points of $\lambda$ and $e$, so that at least one of the two functions will have jumps at these points, even though both are continuously differentiable on each open interval $(\tau_k,\tau_{k+1})$.

The proof of the above proposition in Appendix \ref{app:proof_hjb} will focus on a piecewise solution to the HJB system: $W=(W_0,\ldots,W_N)$, with each component $W_i$ being continuously differentiable on $(\tau_k,\tau_{k+1})$. With the boundary conventions
$W_{N+1}(t)=W_N(t)$ and $W_{-1}(t)=W_0(t)$, the HJB equations can be written uniformly as
\begin{equation}
\label{eq:piecewise_hjb}
\begin{aligned}
-\dot W_i(t)
={}\min_{u\in\mathcal U(i)}
\Bigl\{c(t,i,u)+\lambda(t)\bigl(W_{i+1}(t)-W_i(t)\bigr)+u\bigl(W_{i-1}(t)-W_i(t)\bigr)\Bigr\},
\end{aligned}
\end{equation}

Specifically, $W_i$ will be constructed as follows:
Start from the terminal condition $W_i(T)=0$,
working backward over the intervals: $k=m,\dots, 0$. For the $k$-th interval $(\tau_k,\tau_{k+1})$,
solve the HJB equation in (\ref{eq:piecewise_hjb}), with the boundary condition $W_i(\tau_{k+1}^-)=W_i(\tau_{k+1})$.
Next, repeat the same for the interval $(\tau_{k-1},\tau_{k})$, with the boundary condition $W_i(\tau_{k}^-)=W_i(\tau_{k})$.

What remains is to argue that $W$ so constructed is continuous on $[0,T]$ and $C^1$ on each of the $m+1$ open intervals, and to verify that $W_i(t)=V(t,i)$ for all $t\in[0,T]$ and all $i \in \mathcal S$. The full proof is detailed in Appendix \ref{app:proof_hjb}.

\subsection{Threshold Structure}
\label{sec:state_threshold}

Given the bang-bang structure of the optimal control in each state $i$ in Proposition \ref{pro:optimal_control},
i.e., activate all available servers if $\Delta_i(t) >e(t)$, 
we shall call a state $i$ that satisfies this condition an ``activation state''.
Moreover,  denote
for each $t\in[0,T]$, the set of all activation states:
\begin{equation}
\label{activate}
\mathcal{I}_t:=\{i\in\mathcal{S}\setminus\{0\} :\, \Delta_i(t)>e(t)\}.
\end{equation}

It turns out that the marginal continuation costs $\Delta_i(t):=V(t,i)-V(t,i-1)$, $i\in\mathcal{S}$, are increasing in $i$; or equivalently, $V(t,i)$ is convex in $i$.

\begin{lem} 
\label{lem:monotonicity}
{\rm
The marginal costs $\Delta_i(t) := V(t, i) - V(t, i-1)$ satisfy:
\begin{equation}
\label{delta1}
\pi\ge\Delta_N(t) \ge \Delta_{N-1}(t) \ge \cdots
\ge \Delta_{1}(t)\geq0,
\ \ t \in [0, T].
\end{equation}
Hence, $V(t, i)$ is convex in $i$.
Furthermore, all inequalities in (\ref{delta1}), except the last one $\Delta_1(t)\ge 0$, hold strictly for $t\in[0,T)$.
\hfill$\Box$
}
\end{lem}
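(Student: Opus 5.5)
We work with the system of ODEs satisfied by the marginals $\Delta_i(t)$, obtained by differencing the HJB equations (\ref{eq:piecewise_hjb}) of Proposition \ref{pro:optimal_control}, and run a backward-in-time invariance argument on each interval $(\tau_k,\tau_{k+1})$ in the spirit of Nagumo's theorem.

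\emph{The marginal ODEs.} By Proposition \ref{pro:optimal_control}, the minimum in (\ref{eq:piecewise_hjb}) equals
\[
\pi\lambda\mathbf 1_{\{i=N\}}+\lambda\,\Delta_{i+1}-(i\wedge K)\,(\Delta_i-e)^+,
\]
with the convention $\Delta_{N+1}:=0$. Subtracting the equation for $i-1$ from that for $i$ gives, for $1\le i\le N$ and on each $(\tau_k,\tau_{k+1})$,
\[
-\dot\Delta_i=\lambda(\Delta_{i+1}-\Delta_i)+\pi\lambda\mathbf 1_{\{i=N\}}-(i\wedge K)(\Delta_i-e)^+ +((i-1)\wedge K)(\Delta_{i-1}-e)^+ .
\]
At $i=N$ this reads $-\dot\Delta_N=\lambda(\pi-\Delta_N)-\dots$; at $i=1$ the last term vanishes. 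The terminal data are $\Delta_i(T)=0$. By the construction in Appendix \ref{app:proof_hjb}, each $\Delta_i$ is continuous on $[0,T]$. Hence the jump points $\tau_k$ cause no difficulty: we propagate the desired ordering backward interval by interval, using $\Delta(\tau_{k+1})$ as terminal data for $(\tau_k,\tau_{k+1})$.

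\emph{Invariance of the closed cone.} We show that
\[
C=\{\pi\ge\Delta_N\ge\cdots\ge\Delta_1\ge0\}
\]
is invariant under the backward flow. Set $s=T-t$, so that $\frac{d}{ds}\Delta_i=-\dot\Delta_i$ is given by the right-hand side above. This right-hand side is Lipschitz in $\Delta$, with $\lambda,e$ continuous on each piece, so Nagumo's theorem applies to the convex polyhedron $C$. It then suffices to check the inward-pointing condition on each face; each of the following is a short computation.
\begin{itemize}
\item On $\{\Delta_1=0\}$: we have $(\Delta_1-e)^+=0$, and $\frac{d}{ds}\Delta_1=\lambda(\Delta_2-0)\ge0$.
\item On $\{\Delta_N=\pi\}$: we have $\frac{d}{ds}\Delta_N=-(N\wedge K)(\pi-e)^+ +((N-1)\wedge K)(\Delta_{N-1}-e)^+\le0$. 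This holds because $\Delta_{N-1}\le\pi$ and $(N-1)\wedge K\le N\wedge K$.
\item On $\{\Delta_i=\Delta_{i-1}\}$: we check $\frac{d}{ds}(\Delta_i-\Delta_{i-1})\ge0$. The $\lambda$-terms contribute $\lambda(\Delta_{i+1}-\Delta_i)-\lambda(\Delta_i-\Delta_{i-1})=\lambda(\Delta_{i+1}-\Delta_i)\ge0$; at $i=N$ this term is $\lambda(\pi-\Delta_N)\ge0$. Writing $x=(\Delta_i-e)^+=(\Delta_{i-1}-e)^+$, the control terms contribute
\[
\bigl[-(i\wedge K)+2((i-1)\wedge K)-((i-2)\wedge K)\bigr]x .
\]
The bracket equals minus the second difference of the concave function $j\mapsto j\wedge K$, so it is $\ge0$.
\end{itemize}
This gives the weak inequalities in (\ref{delta1}), and hence the convexity of $V(t,\cdot)$.

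\emph{Strictness for $t<T$.} This is the main obstacle, since at $t=T$ all gaps vanish. Define the gaps $g_i=\Delta_{i+1}-\Delta_i$ for $1\le i\le N$, with the convention $g_N:=\pi-\Delta_N$. In backward time each gap satisfies a linear inequality of the form $\frac{d}{ds}g_i\ge -a_i(s)g_i+\lambda\,g_{i+1}$, where $a_i$ is bounded; the coefficients coming from the $(\cdot)^+$ terms are handled by writing differences of Lipschitz terms as bounded multipliers times gaps.

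The strictness then cascades downward from the top gap:
\begin{itemize}
\item $g_N(T)=\pi>0$, so $g_N(s)\ge \pi e^{-\int a}>0$ for all $s$, by a Grönwall-type comparison.
\item Given $g_{i+1}>0$ on $s>0$, the inequality $\frac{d}{ds}g_i\ge -a_ig_i+\lambda g_{i+1}$ together with $g_i(0)=0$ and $\inf\lambda>0$ forces $g_i(s)>0$ for all $s>0$. Here $s=0$ corresponds to $t=T$.
\end{itemize}
Continuity across the $\tau_k$ preserves positivity, since a gap that is strictly positive at $\tau_{k+1}$ stays positive on the earlier interval by the same argument. The bottom inequality $\Delta_1\ge0$ is not claimed to be strict, and indeed $\Delta_1$ can be small, so nothing more is needed there.

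The delicate point is making the bounded-multiplier representation rigorous near the kink $\Delta_i=e$. I would first try the identity
\[
(x-e)^+-(y-e)^+=\theta\,(x-y),\qquad \theta\in[0,1],
\]
and verify that every control contribution to $\frac{d}{ds}g_i$ involving gaps other than $g_{i+1}$ enters with a favorable sign or as $-a_ig_i$. If that fails, I would fall back on a direct argument: at the first time a gap hits zero, show that its derivative is strictly positive there, which gives a contradiction.
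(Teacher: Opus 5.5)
Your proposal is correct. The weak inequalities are proved as in the paper: Nagumo's theorem on the same polyhedron $\mathcal C$, the same three face checks, and the same use of concavity of $j\mapsto j\wedge K$.

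There is one small slip. On the face $\Delta_i=\Delta_{i-1}$, the third control term is $((i-2)\wedge K)(\Delta_{i-2}-e)^+$, not $((i-2)\wedge K)\,x$. Because $\Delta_{i-2}\le\Delta_{i-1}$ on $\mathcal C$, this term is at most $((i-2)\wedge K)\,x$. So the control contribution is bounded below by your bracket times $x$, which is all you need.

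The real difference is in the strictness of the lower gaps. The top gap $\pi-\Delta_N\ge\pi e^{-\Lambda(t,T)}$ comes from the same scalar comparison in both proofs. For the lower gaps, the paper argues locally. If $\Delta_i-\Delta_{i-1}$ vanished at some backward time $\tau_0>0$, that point would be a minimum of a nonnegative function. Yet the face estimate already used for Nagumo makes its backward-time derivative there at least $\lambda$ times the next gap, which is already known to be positive. This gives a contradiction, with one-sided derivatives used at breakpoints.

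Your integrating-factor cascade needs the differential inequality off the faces, and your $\theta$-identity does deliver it. With $p_j:=(\Delta_j-e)^+$ and $p_{j+1}-p_j=\theta_j g_j$, the control part of $\frac{d}{ds}g_i$ equals
\[
-\bigl((i+1)\wedge K\bigr)\theta_i g_i+\bigl[2(i\wedge K)-\bigl((i+1)\wedge K\bigr)-\bigl((i-1)\wedge K\bigr)\bigr]p_i+\bigl((i-1)\wedge K\bigr)\theta_{i-1}g_{i-1}.
\]
The last two terms are nonnegative, by concavity and by the weak ordering already established. Since $g_i\ge0$, you can simply take $a_i=\lambda+K$, which avoids any measurability question about $\theta_i$. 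This is the same estimate the paper proves later, in Appendix \ref{app:time-monotonicity}.

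Your route costs a global rather than face-only estimate. In return, it gives explicit positive lower bounds on the gaps in one pass (essentially Lemma \ref{lem:lower_bound_D}). It also handles the breakpoints automatically, since the $g_i$ are absolutely continuous and the inequality holds almost everywhere.
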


Intuitively, as $i$ increases, the system has fewer ready units and is closer to stockout. Losing one more ready unit is more likely to result in lost demand: when many ready units are available, losing one unit has little effect because the system has enough buffer; when only a few ready units remain, losing one more unit is more likely to cause lost demand. Hence, losing one more ready unit has a larger marginal continuation cost as $i$ increases. This marginal cost cannot exceed $\pi$, since one additional ready unit can avoid at most one lost-demand penalty.

To prove the ordering in Lemma \ref{lem:monotonicity}, the technical challenge is that the optimal control enters the HJB system through $\Psi_i(t):=(i\wedge K)\min\{0,e(t)-\Delta_i(t)\}$, making the sign of $\Psi_i(t)-2\Psi_{i-1}(t)+\Psi_{i-2}(t)$ difficult to determine. We analyze it on the boundary faces (e.g., $\Delta_i(t)=\Delta_{i-1}(t)$) and apply Nagumo's theorem. In the proof of the above lemma in Appendix \ref{app:DC}, we reverse time and consider the region
\begin{equation}
\mathcal C:=\left\{(\Delta_1,\ldots,\Delta_N):
0\leq\Delta_1\leq\cdots\leq\Delta_N\leq\pi\right\}.
\end{equation}
What remains to prove is that the backward-time trajectory remains in $\mathcal C$, starting from $\Delta_i(T)=0$. We verify that the backward-time derivatives of the marginal costs do not point outside $\mathcal C$ on any boundary face. For example,
\begin{equation*}
-\frac{d}{dt}\bigl(\Delta_i-\Delta_{i-1}\bigr)\geq 0
\quad\text{when}\quad
\Delta_i-\Delta_{i-1}=0.
\end{equation*}
Nagumo's theorem then implies that $\mathcal C$ is invariant, which yields the ordering in \eqref{delta1}. The full proof is detailed in Appendix~\ref{app:DC}.

This monotonicity leads to the threshold structure of the optimal control in Theorem \ref{theo:threshold}.


\begin{theorem} 
\label{theo:threshold}
{\rm
Denote $i_t^{\min} := \min\mathcal{I}_t$ as the lowest/smallest state among all the activation states.
The optimal control in Proposition \ref{pro:optimal_control} can be strengthened as follows:
for all $t\in (0,T)$ and all $i\in \mathcal{S}$,  $u^*(t,i)=\ik$, if $i\ge i_t^{\min}$; otherwise, $u^*(t,i)=0$.
(If $\mathcal{I}_t=\emptyset$, then $u^*(t,i)=0$ by default.)
}
\end{theorem}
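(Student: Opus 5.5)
The plan is to combine the bang--bang characterization of Proposition \ref{pro:optimal_control} with the ordering of marginal costs in Lemma \ref{lem:monotonicity}. By Proposition \ref{pro:optimal_control}, for each $(t,i)$ the optimal action is $u^*(t,i)=i\wedge K$ exactly when $i\in\mathcal I_t$ (for $i\ge 1$), and $u^*(t,i)=0$ otherwise. For $i=0$ the feasible set $\mathcal U(0)=\{0\}$ forces $u^*(t,0)=0=0\wedge K$, so state $0$ is consistent with either description. The statement therefore reduces to showing that $\mathcal I_t$ is an upper set of $\{1,\ldots,N\}$, i.e., $\mathcal I_t=\{i_t^{\min},i_t^{\min}+1,\ldots,N\}$ whenever it is nonempty.

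First, I fix $t\in(0,T)$ and suppose $\mathcal I_t\neq\emptyset$, so $i_t^{\min}$ is well defined. Next, I take any $i\ge i_t^{\min}$. By Lemma \ref{lem:monotonicity}, $\Delta_i(t)\ge\Delta_{i_t^{\min}}(t)$, since the chain $\Delta_N(t)\ge\cdots\ge\Delta_1(t)$ holds on $[0,T]$, and strictly for $t<T$. By the definition of $i_t^{\min}$ we also have $\Delta_{i_t^{\min}}(t)>e(t)$, so $\Delta_i(t)>e(t)$ and hence $i\in\mathcal I_t$. Proposition \ref{pro:optimal_control} then gives $u^*(t,i)=i\wedge K$.

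Conversely, for $1\le i<i_t^{\min}$, minimality gives $i\notin\mathcal I_t$, i.e., $\Delta_i(t)\le e(t)$, so $u^*(t,i)=0$. If $\mathcal I_t=\emptyset$, then $\Delta_i(t)\le e(t)$ for every $i\ge1$, and Proposition \ref{pro:optimal_control} yields $u^*\equiv0$, matching the default convention. Because $i_t^{\min}$ is defined through $\Delta_i(t)$ and $e(t)$ at time $t$ only, the threshold depends only on $t$.

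There is no genuine obstacle, since the substantive work is already contained in the convexity lemma. The one point to state carefully is that only the weak monotonicity in (\ref{delta1}) is needed; strictness is not required. The restriction to $t\in(0,T)$ is only to avoid the degenerate endpoints. At $t=T$ we have $\Delta_i\equiv0<e(T)$, so $\mathcal I_T=\emptyset$ and the claim holds trivially. At $t=0$ the same argument applies verbatim.
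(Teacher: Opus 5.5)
Your proof is correct and follows essentially the same route as the paper: Proposition \ref{pro:optimal_control} reduces the claim to showing that $\mathcal I_t$ is an upper interval of $\{1,\ldots,N\}$, and this follows from the weak ordering of the $\Delta_i(t)$ in Lemma \ref{lem:monotonicity}, with the empty-set case handled by the default (the paper uses the convention $i_t^{\min}=N+1$). Your extra remarks on state $0$, on needing only the weak ordering, and on the endpoints $t=0,T$ are harmless refinements of that argument.
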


\begin{proof}
Fix $t\in[0,T]$. By Proposition \ref{pro:optimal_control}, the selected optimal action is
\begin{equation*}
u^*(t,i)=
\begin{cases}
i\wedge K, & \Delta_i(t)>e(t),\\
0, & \Delta_i(t)\le e(t).
\end{cases}
\end{equation*}
By Lemma \ref{lem:monotonicity}, $\Delta_i(t)$ is increasing in $i$. Thus, if $k\in\mathcal I_t$, then, for every $j>k$,
\begin{equation}
\Delta_j(t)\ge\Delta_k(t)>e(t),
\end{equation}
so $j\in\mathcal I_t$. Hence, $\mathcal I_t$ is an upper interval of $\{1,\ldots,N\}$. If $\mathcal I_t\neq\emptyset$, then $i_t^{\min}=\min\mathcal I_t$ is well defined. By the definition of $i_t^{\min}$,
\begin{equation}
\begin{aligned}
\Delta_i(t)&\le e(t), && i<i_t^{\min},\\
\Delta_i(t)&>e(t), && i\ge i_t^{\min}.
\end{aligned}
\end{equation}
Proposition \ref{pro:optimal_control} therefore gives
\begin{equation*}
u^*(t,i)=
\begin{cases}
i\wedge K, & i\ge i_t^{\min},\\
0, & i<i_t^{\min}.
\end{cases}
\end{equation*}
If $\mathcal I_t=\emptyset$, then $\Delta_i(t)\le e(t)$ for every $i=1,\ldots,N$, and Proposition \ref{pro:optimal_control} gives $u^*(t,i)=0$ in every state. Under the convention $i_t^{\min}=N+1$, the same threshold representation remains valid. This proves the theorem.
\end{proof}

The intuition behind the above theorem is the combination of Proposition \ref{pro:optimal_control} and Lemma \ref{lem:monotonicity}: if replenishment is worthwhile (i.e., $\Delta_i(t)>e(t)$) at a given state, it remains worthwhile whenever fewer ready units are available, as replenishment is more valuable when the system is closer to stockout (i.e., $\Delta_i(t)>\Delta_{i-1}(t)$).

The above theorem strengthens the earlier bang--bang characterization of the optimal action via $\Delta_i(t)$, which depends on both time and state, to a threshold $i_t^{\min}$ that only depends on time. At any time $t$, the threshold is the {\it lowest} activation state, i.e., the smallest number of idle units that warrant activation. Having been already identified as the lowest (among all activation states), the threshold has removed any state dependency.

\section{Monotonicity of the Threshold}
\label{sec:monotonicity}

\subsection{Temporal Monotonicity}
\label{sec:temporal_switch}

As time advances, there is less time to use replenished units. This motivates us to examine how the threshold $i_t^{\min}$ evolves over time. Specifically, we identify conditions under which $i_t^{\min}$ is increasing in time. Under this temporal monotonicity, the set $\mathcal{I}_t$ will shrink as $t$ increases to  $T$. Consequently, for any given state $i$, the optimal action can only switch from activation to inaction at most once and cannot subsequently be reactivated.


\begin{lem}
\label{lem:lower_bound_D}
{\rm
Denote $\delta_i(t):=\Delta_i(t)-\Delta_{i-1}(t)$, for
$i=2,\ldots,N$, and let
$\Lambda(t,s):=\int_t^s\lambda(r)\,dr$.
For every $i=2,\ldots,N$ and $t\in[0,T]$,
\begin{equation}
\label{deltalb}
\delta_i(t)\ge \underline\delta_i(t)
:=\pi e^{-\Lambda(t,T)-K(T-t)}
\frac{\Lambda(t,T)^{N-i+1}}{(N-i+1)!}.
\end{equation}
Moreover, these bounds satisfy $\underline\delta_i(t)\ge\underline\delta(t)$ where
\begin{equation}
\label{deltalb00}
\begin{aligned}
\underline\delta(t)\!:=\pi e^{-\Lambda(t,T)-K(T-t)}
\!\min\left\{\Lambda(t,T),
\frac{\Lambda(t,T)^{N-1}}{(N-1)!}\right\}\!.
\end{aligned}
\end{equation}
}
\hfill$\Box$
\end{lem}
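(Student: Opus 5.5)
We will derive the backward ODE system for the second differences $\delta_i$ from the piecewise HJB system (\ref{eq:piecewise_hjb}) and bound it from below by a linear system that we can solve explicitly. On each interval $(\tau_k,\tau_{k+1})$, with $\Psi_i(t)=(i\wedge K)\min\{0,e(t)-\Delta_i(t)\}$, the HJB gives
\[
-\dot\Delta_i=\lambda(t)(\Delta_{i+1}-\Delta_i)+\Psi_i-\Psi_{i-1},\qquad i=1,\ldots,N,
\]
with the convention $\Delta_{N+1}:=\pi$. This convention accounts for the lost-sales term $\pi\lambda\mathbf 1_{\{i=N\}}$ together with $W_{N+1}=W_N$. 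Differencing once more gives
\[
-\dot\delta_i=\lambda(t)(\delta_{i+1}-\delta_i)+(\Psi_i-2\Psi_{i-1}+\Psi_{i-2}),
\]
with $\delta_{N+1}:=\pi-\Delta_N\ge 0$ by Lemma~\ref{lem:monotonicity}.

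The key step is a one-sided estimate of the control term, namely $\Psi_i-2\Psi_{i-1}+\Psi_{i-2}\ge -K\delta_i$. The idea is that $\Psi_i-\Psi_{i-1}$ is bounded below by $-(i\wedge K)(\Delta_i-\Delta_{i-1})$ up to terms of favourable sign. This follows from $\Delta$ being increasing (Lemma~\ref{lem:monotonicity}), from $x\mapsto\min\{0,e-x\}$ being nonincreasing and 1-Lipschitz, and from $i\wedge K\le K$. I expect the clean form of this inequality to require a case analysis on where $e(t)$ sits relative to $\Delta_{i-2}\le\Delta_{i-1}\le\Delta_i$, and on whether $i\le K$. \textbf{This is the main obstacle.} The cases with $i-1<K$, where the coefficients $i\wedge K$ differ between neighbouring states, must be checked carefully. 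If a clean $-K\delta_i$ bound fails, I would settle for $-c\,\delta_i$ with some $c\le K$, which still suffices.

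Granting that estimate, in reversed time $s=T-t$ each $\delta_i$ satisfies the differential inequality
\[
\tfrac{d}{ds}\delta_i\ \ge\ \lambda\,\delta_{i+1}-(\lambda+K)\delta_i,
\qquad \delta_i(T)=0 .
\]
It is also convenient to treat $\Delta_{N+1}=\pi$ directly, so that the top equation reads $\frac{d}{ds}\Delta_N\ge \lambda(\pi-\Delta_N)-K(\cdot)$. Multiplying by the integrating factor $e^{\Lambda(t,T)+K(T-t)}$ and integrating gives a Duhamel representation. In it, $\delta_i(t)$ dominates $\int_t^T \lambda(s)e^{-\Lambda(t,s)-K(s-t)}\delta_{i+1}(s)\,ds$.

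Iterating this bound downward from the top level yields a chain of $N-i+1$ nested integrals, in which the source $\pi\lambda$ is carried up from level $N+1$. Since every factor in the integrand is nonnegative, the nested integral is bounded below by replacing each inner $e^{-K(\cdot)}$ and $e^{-\Lambda(\cdot)}$ factor with the worst case $e^{-\Lambda(t,T)-K(T-t)}$. The remaining iterated integral $\int\cdots\int\lambda\cdots\lambda$ over the simplex equals $\Lambda(t,T)^{N-i+1}/(N-i+1)!$, which gives (\ref{deltalb}). Comparison of solutions across the jump points $\tau_k$ follows from the continuity of $W$ established in Proposition~\ref{pro:optimal_control}.

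Finally, for (\ref{deltalb00}) we set $n=N-i+1\in\{1,\ldots,N-1\}$. The map $n\mapsto x^n/n!$ (with $x=\Lambda(t,T)$) is log-concave in $n$, because $\log(x^n/n!)=n\log x-\log n!$ and $\log n!$ is convex. Hence its minimum over the integer range $\{1,\ldots,N-1\}$ is attained at an endpoint, giving $\min\{x,\,x^{N-1}/(N-1)!\}$ and hence $\underline\delta_i(t)\ge\underline\delta(t)$.
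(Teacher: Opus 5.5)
\textbf{Two steps have genuine gaps.} Your overall architecture matches the paper's: the second-difference system, a lower bound on the control term, integrating factors, nested simplex integrals, and the endpoint minimum. The log-concavity argument for the minimum is fine and is equivalent to the paper's ratio argument. The gaps are as follows.

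\textbf{First gap: the key estimate.} You leave it unproved, and your sketch is wrong as stated. The first difference $\Psi_i-\Psi_{i-1}$ alone is \emph{not} bounded below by $-(i\wedge K)\delta_i$ up to terms of favourable sign. Write $u_j=j\wedge K$, $u_0=0$ and $f_j:=\min\{0,e(t)-\Delta_j\}$. Then $\Psi_i-\Psi_{i-1}=u_i(f_i-f_{i-1})+(u_i-u_{i-1})f_{i-1}$, and the last term is nonpositive. For example, when $i\le K$ and $e(t)<\Delta_{i-1}<\Delta_i$, one gets exactly $\Psi_i-\Psi_{i-1}=-u_i\delta_i+(e(t)-\Delta_{i-1})<-u_i\delta_i$. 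The missing idea is to absorb this term with the other first difference and use discrete concavity of $j\mapsto j\wedge K$:
\[
\Psi_i-2\Psi_{i-1}+\Psi_{i-2}=u_i(f_i-f_{i-1})+u_{i-2}(f_{i-2}-f_{i-1})+(u_i-2u_{i-1}+u_{i-2})f_{i-1}\ge-u_i\delta_i .
\]
This holds because $f$ is nonincreasing and $1$-Lipschitz, so the first term is $\ge-u_i\delta_i$ and the second is $\ge0$. The third term is a product of two nonpositive factors, hence $\ge0$. No case analysis is needed, and $-u_i\delta_i\ge-K\delta_i$ since $\delta_i\ge0$. Your fallback is also backwards. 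Because $\delta_i\ge0$, a bound $-c\,\delta_i$ with $c\le K$ is \emph{stronger} than $-K\delta_i$, not weaker. A genuinely weaker bound would need $c>K$, which would change the exponent in the statement.

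\textbf{Second gap: the top of the chain.} You only record $\delta_{N+1}=\pi-\Delta_N\ge0$, and with that input the recursion yields nothing beyond $\delta_i\ge0$. The forcing at level $N$ is $\lambda(\pi-\Delta_N)$, not $\pi\lambda$. Since $\pi-\Delta_N\le\pi$, replacing it by $\pi$ is not a valid lower bound. Your inequality $\frac{d}{ds}\Delta_N\ge\lambda(\pi-\Delta_N)-\cdots$ also points the wrong way, because what you need is an \emph{upper} bound on $\Delta_N$.

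The correct input is $\pi-\Delta_N(s)\ge\pi e^{-\Lambda(s,T)}$, obtained as follows:
\begin{itemize}
\item From $u_N\ge u_{N-1}$, $\Delta_N\ge\Delta_{N-1}$, and $f\le0$ nonincreasing, you get $\Psi_N\le\Psi_{N-1}$.
\item Hence $\frac{d}{ds}\Delta_N\le\lambda(\pi-\Delta_N)$ in reversed time.
\item Scalar comparison from $\Delta_N(T)=0$ then gives $\Delta_N(t)\le\pi\bigl(1-e^{-\Lambda(t,T)}\bigr)$. This is the bound the paper takes from the proof of Lemma~\ref{lem:monotonicity}.
\end{itemize}
With this input, the factor $e^{-\Lambda(s_N,T)}$ combines with $e^{-\Lambda(t,s_N)}$ from the nested integrals to give exactly $e^{-\Lambda(t,T)}$, and only the $K$-part of the exponent needs worst-casing. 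Your final formula comes out right only by coincidence. Worst-casing $e^{-\Lambda(t,s_N)}$ to $e^{-\Lambda(t,T)}$ happens to discard exactly the factor $e^{-\Lambda(s_N,T)}$ by which your unjustified input $\pi$ overstates the correct one.
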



Observe that $\delta_i(t)$ measures the increase in the marginal value of replenishment as inventory becomes more depleted. Without replenishment, exactly $N-i+1$ arrivals cause one lost demand from state $i$, but none from states $i-1$ and $i-2$. The difference between the two marginal losses is therefore $\pi$. The probability of this event is the Poisson arrival probability
\begin{equation*}
e^{-\Lambda(t,T)}
\frac{\Lambda(t,T)^{N-i+1}}{(N-i+1)!}
\end{equation*}
multiplied by $e^{-K(T-t)}$, the probability of no completion at the maximum replenishment rate $K$. Multiplying this probability by $\pi$ gives the state-dependent lower bound $\underline\delta_i(t)$.

The technical challenge in proving Lemma \ref{lem:lower_bound_D} lies in controlling the second difference of the term $\Psi_i(t):=(i\wedge K)\min\{0,e(t)-\Delta_i(t)\}$ in the HJB equations. The proof will first bound this second difference from below, and then integrate the HJB system with this bound, yielding the lower bounds on $\delta_i(t)$ in \eqref{deltalb}. The full proof is detailed in Appendix \ref{app:time-monotonicity}.

The lower bound $\underline\delta(t)$ leads to a sufficient condition under which the threshold $i_t^{\min}$ is increasing in time, as stated in the theorem below.

\begin{theorem}
\label{theo:threshold_monotonicity}
{\rm Suppose the operating cost has no downward jumps, i.e., $e(\tau_k)\ge e(\tau_k^-)$ at every discontinuity point $\tau_k$ in (\ref{jump}).

\hsk
(i) ({\it Sufficient}) If $\dot e(t)>-\lambda(t)
\min\left\{\underline \delta(t),\pi e^{-\Lambda(t,T)}\right\}$, then $i_t^{\min}$ is increasing in $t$.

\hsk
(ii) ({\it Necessary}) If $i_t^{\min}$ is increasing in $t$, then $\dot e(t)\ge-\lambda(t)\bigl(\pi-e(t)\bigr)$ whenever $\Delta_{i_t^{\min}-1}(t)=e(t)$.
}
\end{theorem}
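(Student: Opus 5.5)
The plan is to reduce monotonicity of $i_t^{\min}$ to a one-state-at-a-time statement. By Theorem~\ref{theo:threshold} and Lemma~\ref{lem:monotonicity}, $\mathcal I_t$ is an upper interval. Hence $i_t^{\min}$ is non-decreasing in $t$ if and only if $\mathcal I_t$ shrinks in $t$. That holds if and only if, for every $i$, the function $g_i(t):=e(t)-\Delta_i(t)$ stays $\ge 0$ once it is $\ge 0$. Part (i) will follow from a derivative sign at the crossing points plus a treatment of the jumps; part (ii) reads off the same derivative formula.

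\textbf{Derivative of $\Delta_i$.} On each open interval $(\tau_k,\tau_{k+1})$, differencing the HJB system (\ref{eq:piecewise_hjb}) with $V=W$ (from the proof of Proposition~\ref{pro:optimal_control}) gives
\begin{equation*}
-\dot\Delta_i=\lambda(\Delta_{i+1}-\Delta_i)+\Psi_i-\Psi_{i-1}\quad(i<N),\qquad
-\dot\Delta_N=\lambda(\pi-\Delta_N)+\Psi_N-\Psi_{N-1},
\end{equation*}
where $\Psi_i=(i\wedge K)\min\{0,e-\Delta_i\}$. Suppose $t<T$ is a point where $\Delta_i(t)=e(t)$. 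Then $\Psi_i(t)=0$, and by the strict ordering in Lemma~\ref{lem:monotonicity} we have $\Delta_{i-1}(t)<e(t)$, so $\Psi_{i-1}(t)=0$ as well. Consequently
\begin{equation*}
\dot g_i(t)=\dot e(t)+\lambda(t)\delta_{i+1}(t)\quad (i<N),\qquad \dot g_N(t)=\dot e(t)+\lambda(t)\bigl(\pi-\Delta_N(t)\bigr).
\end{equation*}

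\textbf{Sufficiency.} For $i<N$, Lemma~\ref{lem:lower_bound_D} gives $\delta_{i+1}\ge\underline\delta$. For $i=N$ I need the extra bound $\pi-\Delta_N(t)\ge\pi e^{-\Lambda(t,T)}$, which I would prove by Grönwall.
\begin{itemize}
\item Set $x=\pi-\Delta_N$, so that $\dot x=\lambda x+\Psi_N-\Psi_{N-1}$.
\item Since $\Delta_N\ge\Delta_{N-1}$ and $N\wedge K\ge(N-1)\wedge K$, we have $\Psi_N\le\Psi_{N-1}$, hence $\dot x\le\lambda x$.
\item Integrating backward from $x(T)=\pi$, using continuity of $x$ across the $\tau_k$, gives $x(t)\ge\pi e^{-\Lambda(t,T)}$.
\end{itemize}
Under the hypothesis of (i), these bounds show $\dot g_i(t)>0$ at every interior crossing point.

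Now suppose $g_i(t_0)\ge0$ but $g_i(t_1)<0$ for some $t_1>t_0$. Let $s=\sup\{t\in[t_0,t_1]: g_i(t)\ge0\}$. $V$, hence $\Delta_i$, is continuous on $[0,T]$, and $e$ is right-continuous with only upward jumps. Therefore $g_i$ can only jump up, and so $g_i(s)=0$ with $g_i<0$ on $(s,t_1]$.
\begin{itemize}
\item If $s$ lies inside an interval $(\tau_k,\tau_{k+1})$, this contradicts $\dot g_i(s)>0$.
\item If $s=\tau_k$, use the right derivative on $(\tau_k,\tau_{k+1})$, which satisfies the same formula by one-sided $C^1$ regularity, and obtain the same contradiction.
\end{itemize}
This is the step I expect to be most delicate: making the one-sided regularity at the $\tau_k$ rigorous, and checking that the no-downward-jump assumption is exactly what rules out re-entry into $\mathcal I_t$ across a jump.

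\textbf{Necessity.} Assume $i_t^{\min}$ is non-decreasing, and let $j=i_t^{\min}-1$ with $\Delta_j(t)=e(t)$, so $j\le N-1$. State $j$ is inactive at $t$ and must stay inactive, so $g_j\ge0$ on $[t,t+h)$ while $g_j(t)=0$. Hence the right derivative $\dot e(t)+\lambda(t)\delta_{j+1}(t)$ must be $\ge0$. Finally, $\delta_{j+1}=\Delta_{j+1}-e\le\pi-e$ by Lemma~\ref{lem:monotonicity}. This gives $\dot e(t)\ge-\lambda(t)\delta_{j+1}(t)\ge-\lambda(t)\bigl(\pi-e(t)\bigr)$, which is (ii).
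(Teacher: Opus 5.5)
Your proposal is correct and follows the paper's proof closely: the same formula for $\frac{d}{dt}\bigl(\Delta_i-e\bigr)$ on the face $\Delta_i=e$, the same bounds $\delta_{i+1}\ge\underline\delta$ and $\pi-\Delta_N\ge\pi e^{-\Lambda(t,T)}$ (the latter derived in Appendix~\ref{app:DC} by the same Gr\"onwall/comparison argument you give), the same use of the no-downward-jump assumption at the $\tau_k$, and the same necessity argument. The one difference is how re-entry is ruled out: the paper integrates $\dot\Delta_i-\dot e<u_i(\Delta_i-e)$ against $e^{-u_i s}$ over the whole active stretch instead of signing the derivative at the first crossing, which uses condition (i) only at interior points and so handles the crossing-at-$\tau_k$ case you flagged without needing a one-sided derivative of $e$ there; also, in (ii) the case $j=N$ (i.e.\ $\mathcal I_t=\emptyset$ under the convention $i_t^{\min}=N+1$) can occur, contrary to your ``so $j\le N-1$'', but your formula for $\dot g_N$ yields it immediately.
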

\begin{proof}
For $i=1,\ldots,N$, at any differentiability point where $\Delta_i(t)=e(t)$, Lemma \ref{lem:monotonicity} implies $\Psi_i(t)=\Psi_{i-1}(t)=0$, with $\Psi_0(t)=0$. The HJB system therefore gives
\begin{equation}
\label{eq:temporal_boundary}
\frac{d}{dt}\bigl(\Delta_i(t)-e(t)\bigr)
=-\dot e(t)-\lambda(t)
\begin{cases}
\delta_{i+1}(t), &\mkern-10mu i<N,\\
\pi-e(t), &\mkern-10mu i=N.
\end{cases}
\end{equation}

We first prove the {sufficient} condition. By Lemma \ref{lem:lower_bound_D} and $\pi-\Delta_N(t)\ge\pi e^{-\Lambda(t,T)}$ (see \eqref{eq:threshold_delta_N_upper_bound} in Appendix \ref{app:DC}), condition (i) makes the derivative in (\ref{eq:temporal_boundary}) strictly negative. More generally, whenever $\Delta_i(t)\ge e(t)$, the HJB system and the same bounds give
\begin{equation}
\label{ieq:diff}
\begin{aligned}
\dot\Delta_i(t)-\dot e(t)
=u_i\bigl(\Delta_i(t)-e(t)\bigr)+\Psi_{i-1}(t)-\lambda(t)\delta_{i+1}(t)-\dot e(t)<u_i\bigl(\Delta_i(t)-e(t)\bigr).
\end{aligned}
\end{equation}
Suppose that $\Delta_i(t)-e(t)$ becomes positive after being nonpositive within a continuity interval. By continuity, there exist $t_0<t_1$ such that $\Delta_i(t_0)-e(t_0)=0$ and $\Delta_i(s)-e(s)>0$ for every $s\in(t_0,t_1]$. Integrating \eqref{ieq:diff} with the factor $e^{-u_i s}$ gives 
\begin{equation}
\begin{aligned}
e^{-u_i t_1}\bigl(\Delta_i(t_1)-e(t_1)\bigr)=\mkern-3mu\int_{t_0}^{t_1}e^{-u_i s} \Bigl[\dot\Delta_i(s)-\dot e(s)-u_i\bigl(\Delta_i(s)-e(s)\bigr)\Bigr]\,ds<0.
\end{aligned}
\end{equation}
This contradicts $\Delta_i(t_1)-e(t_1)>0$. Thus, an inactive state cannot become active within a continuity interval. At a discontinuity point, $\Delta_i(t)$ is continuous and $e(t)$ has no downward jump, so their difference cannot jump upward either. Thus, an inactive state remains inactive, and $i_t^{\min}$ is increasing.

For the {necessity} condition, fix a differentiability point $t$ such that $i:=i_t^{\min}-1\in\{1,\ldots,N\}$ and $\Delta_i(t)=e(t)$. Since the threshold is increasing, state $i$ remains inactive after $t$, implying $\dot\Delta_i(t)-\dot e(t)\le0$. For $i<N$, (\ref{eq:temporal_boundary}) gives $\dot e(t)\ge-\lambda(t)\delta_{i+1}(t)$. Since $\delta_{i+1}(t)=\Delta_{i+1}(t)-e(t)\le\pi-e(t)$, we obtain $\dot e(t)\ge-\lambda(t)(\pi-e(t))$. For $i=N$, the same conclusion follows directly from (\ref{eq:temporal_boundary}).
\end{proof}

The sufficient and necessary conditions differ in how the relevant marginal gap is bounded. The sufficient condition uses the common lower bound in Lemma \ref{lem:lower_bound_D}, which uniformly rules out reactivation at every possible state boundary: the operating cost may decrease, but more slowly than the marginal value of replenishment. The necessary condition instead uses the universal upper bound $\pi-e(t)$ at the state boundary actually reached by the optimal control: the operating cost cannot decrease faster than the marginal value can possibly decline, since doing so would make replenishment worthwhile again.

An increasing operating cost $e(t)$ guarantees temporal monotonicity of the threshold. The following corollary states this consequence on any interval of the planning horizon and will be used in later sections.

\begin{cor}
\label{cor:increasing_operating_cost}
{\rm
Let $[a,b]\subseteq[0,T]$. Suppose that $\dot e(t)\ge0$ at every continuity point $t\in(a,b)$ and that $e(\tau_k)\ge e(\tau_k^-)$ at every discontinuity point $\tau_k\in(a,b]$. Then $i_t^{\min}$ is increasing on $[a,b]$.
}
\end{cor}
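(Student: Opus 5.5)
The plan is to rerun the mechanism in the sufficiency part of Theorem~\ref{theo:threshold_monotonicity}(i), restricted to $[a,b]$. Throughout I will use the bound $\lambda(t)\,\delta_{i+1}(t)>0$, or $\lambda(t)(\pi-\Delta_N(t))>0$ when $i=N$, for $t<T$, in place of the condition $\dot e(t)>-\lambda(t)\min\{\cdots\}$. The point is that under $\dot e\ge 0$ the required strict negativity will come from the strict inequalities in Lemma~\ref{lem:monotonicity}, not from any condition on $\dot e$. The claim to prove is that any state $i$ which is inactive at some $t_0\in[a,b]$, meaning $\Delta_i(t_0)\le e(t_0)$, stays inactive on $[t_0,b]$. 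This makes $\mathcal I_t$ shrink on $[a,b]$, so $i_t^{\min}$ is increasing there.

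\textbf{Step 1 (within a continuity interval).} Fix $i$ and a subinterval of $(a,b)$ containing no $\tau_k$. Suppose $\Delta_i-e$ goes from nonpositive to positive. Then there are $t_0<t_1$ with $\Delta_i(t_0)=e(t_0)$ and $\Delta_i>e$ on $(t_0,t_1]$. On $(t_0,t_1]$ state $i$ is active, so $u_i=i\wedge K$. Differentiating the HJB system \eqref{eq:piecewise_hjb} as in \eqref{ieq:diff} gives
\[
\dot\Delta_i-\dot e=u_i(\Delta_i-e)+\Psi_{i-1}-\lambda\,\delta_{i+1}-\dot e ,
\]
with the convention $\delta_{N+1}:=\pi-\Delta_N$ for $i=N$. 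The last three terms are each negative or nonpositive:
\begin{itemize}
\item $\Psi_{i-1}\le 0$ by its definition;
\item $\dot e\ge 0$ by hypothesis;
\item $\lambda\,\delta_{i+1}>0$ for $s<T$ by Lemma~\ref{lem:monotonicity} and $\inf\lambda>0$.
\end{itemize}
Hence $\dot\Delta_i-\dot e<u_i(\Delta_i-e)$ on $(t_0,t_1]$ (note $t_1\le b\le T$, and strictness is only needed a.e.). Integrating with the factor $e^{-u_i s}$ yields $e^{-u_it_1}(\Delta_i(t_1)-e(t_1))<0$, a contradiction.

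\textbf{Step 2 (discontinuity points).} At each $\tau_k\in(a,b]$, $\Delta_i$ is continuous by Proposition~\ref{pro:optimal_control}, since $W$ is continuous. Because $e(\tau_k)\ge e(\tau_k^-)$, we get $\Delta_i(\tau_k)-e(\tau_k)\le \Delta_i(\tau_k^-)-e(\tau_k^-)\le 0$, so state $i$ cannot become active across the jump. Chaining Steps 1 and 2 over the finitely many subintervals of $[a,b]$ gives the claim.

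\textbf{Main obstacle.} The delicate point is the boundary behaviour near $t=T$ (if $b=T$) and at the endpoint $a$, together with one-sided derivatives at the $\tau_k$. I will handle this by working on open continuity intervals and using right-continuity of $e$. At $T$ itself, $\Delta_i(T)=0<e(T)$, so nothing becomes active there. I would also note that the convention $i_t^{\min}=N+1$ for $\mathcal I_t=\emptyset$ keeps monotonicity consistent when $\mathcal I_t$ empties.
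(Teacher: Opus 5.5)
Your argument is correct, but it is organized differently from the paper's proof. The paper gives a one-line reduction. For $t<T$ it notes $\Lambda(t,T)>0$, so $\underline\delta(t)>0$ by Lemma~\ref{lem:lower_bound_D}, and also $\pi e^{-\Lambda(t,T)}>0$. Hence $\dot e(t)\ge 0>-\lambda(t)\min\{\underline\delta(t),\pi e^{-\Lambda(t,T)}\}$, and Theorem~\ref{theo:threshold_monotonicity}(i) is invoked as a black box.

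You instead inline the proof of Theorem~\ref{theo:threshold_monotonicity}(i) on $[a,b]$. You get the sign of $\Psi_{i-1}-\lambda\delta_{i+1}-\dot e$ directly from the qualitative strict ordering in Lemma~\ref{lem:monotonicity}, so the quantitative bound of Lemma~\ref{lem:lower_bound_D} is never needed.

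The paper's route is shorter. Yours is self-contained and localizes the argument explicitly. Theorem~\ref{theo:threshold_monotonicity} is stated with hypotheses on all of $[0,T]$: no downward jumps at every $\tau_k$, and condition (i) at every $t$. Applying it to a subinterval $[a,b]$ therefore relies tacitly on the local nature of its proof, and your chaining over continuity intervals makes that explicit.

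One further simplification is available: strictness is not needed at all. With $g:=\Delta_i-e$, $g(t_0)=0$ and $\dot g\le u_i g$ on $(t_0,t_1)$ already force $g(t_1)\le 0$, which contradicts $g(t_1)>0$. So the weak ordering $\delta_{i+1}\ge 0$ and $\Delta_N\le\pi$, together with $\dot e\ge 0$, suffices.
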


\begin{proof}
For every $t<T$, the assumption $\inf_{s\in[0,T]}\lambda(s)>0$ implies $\Lambda(t,T)>0$. Hence, Lemma \ref{lem:lower_bound_D} gives $\underline\delta(t)>0$, while clearly $\pi e^{-\Lambda(t,T)}>0$. Therefore, at every continuity point $t\in(a,b)$,
\begin{equation}
\label{eq:cor_increasing_cost_condition}
\dot e(t)\ge0>
-\lambda(t)\min\left\{\underline\delta(t),
\pi e^{-\Lambda(t,T)}\right\}.
\end{equation}
Thus, the sufficient condition in Theorem \ref{theo:threshold_monotonicity} holds.
This proves that $i_t^{\min}$ is increasing on $[a,b]$.
\end{proof}

\subsection{Monotonicity in Resource Capacity}
\label{sec:capacity_control_interaction}

Inventory provides a buffer against demand, while replenishment capacity determines how many depleted units can be replenished simultaneously. We examine how these two capacities, $N$ and $K$, affect the activation threshold in Theorem \ref{theo:threshold}. To make this dependence explicit, let $V^{N,K}(t,i)$ denote the value function for a system with $N$ units and at most $K$ replenishment servers, and define
\begin{equation}
\Delta_i^{N,K}(t):=V^{N,K}(t,i)-V^{N,K}(t,i-1),
\end{equation}
for $i=1,\ldots,N$. Following \eqref{activate}, denote the corresponding set of activation states by
\begin{equation}
\mathcal I_t^{N,K}:=
\left\{i\in\mathcal{S}\setminus\{0\}:
\Delta_i^{N,K}(t)>e(t)\right\}.
\end{equation}
Let $i_{N,K}^{\min}(t)$ denote the smallest activation state in a system with capacities $N$ and $K$, with the convention that $i_{N,K}^{\min}(t)=N+1$ if $\mathcal I_t^{N,K}=\emptyset$.

\begin{pro}
\label{pro:capacity_marginal_cost}
{\rm
For every $t\in[0,T]$, the following statements hold.
\begin{enumerate}
    \item For every fixed $K$, every $N\ge1$, and every common state $i=1,\ldots,N$, $\Delta_i^{N+1,K}(t)\le\Delta_i^{N,K}(t)$.

    \item For every fixed $N$, every $K\ge1$, and every state $i=1,\ldots,N$, $\Delta_i^{N,K+1}(t)\le\Delta_i^{N,K}(t)$.
    \hfill$\Box$
\end{enumerate}
}
\end{pro}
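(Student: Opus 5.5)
Both comparisons will be handled with one device: a backward-in-time invariance argument on the marginal costs, the same Nagumo-type technique used for Lemma~\ref{lem:monotonicity}. First we write the HJB system \eqref{eq:piecewise_hjb} in terms of $\Delta_i$. With $\Psi_i(t)=(i\wedge K)\min\{0,e(t)-\Delta_i(t)\}$, we have $-\dot V(t,i)=\pi\lambda\mathbf 1_{\{i=N\}}+\lambda\Delta_{i+1}+\Psi_i$, using the conventions $\Delta_{N+1}=0$ and $\Psi_0=0$. Taking differences gives, for $i=1,\dots,N$,
\begin{equation*}
-\dot\Delta_i(t)=\lambda(t)\bigl(\Delta_{i+1}(t)-\Delta_i(t)\bigr)+\Psi_i(t)-\Psi_{i-1}(t),
\end{equation*}
where the term $\Delta_{N+1}$ is replaced by $\pi$ when $i=N$, since the lost-demand penalty plays that role. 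We write this system once for $(N,K)$ and once for the comparison system, on each continuity interval $(\tau_k,\tau_{k+1})$. All $\Delta$'s are continuous across the $\tau_k$, and all vanish at $T$.

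For part 2 (more servers), set $D_i:=\Delta_i^{N,K}-\Delta_i^{N,K+1}$ and show the region $\{D_i\ge0,\ i=1,\dots,N\}$ is invariant in reversed time. By Lemma~\ref{lem:monotonicity}, applied to both systems, both vectors lie in $\mathcal C$, so we may work on the product set $\mathcal C\times\mathcal C\cap\{D\ge0\}$ and check only the faces $D_i=0$.

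On the face $D_i=0$ (with $D_j\ge 0$ for all $j$), the backward derivative of $D_i$ is
\begin{equation*}
\lambda D_{i+1}+\bigl[\Psi_i^{K}-\Psi_i^{K+1}\bigr]-\bigl[\Psi_{i-1}^{K}-\Psi_{i-1}^{K+1}\bigr].
\end{equation*}
The first term is nonnegative. At index $i$ the two $\Delta$'s coincide, so $\Psi_i^K-\Psi_i^{K+1}=(\min\{0,e-\Delta_i\})\bigl((i\wedge K)-(i\wedge(K+1))\bigr)\ge0$, because $\min\{0,\cdot\}\le0$ and $i\wedge K\le i\wedge(K+1)$. The delicate term is $-(\Psi_{i-1}^K-\Psi_{i-1}^{K+1})$. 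Here $\Delta_{i-1}^K\ge\Delta_{i-1}^{K+1}$ pushes $\Psi^K_{i-1}$ down, which is good for us, but the larger coefficient $(i-1)\wedge(K+1)$ pushes $\Psi^{K+1}_{i-1}$ down, which is bad.

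We handle this by splitting into cases.
\begin{itemize}
\item If $i-1\le K$, the coefficients agree and monotonicity of $x\mapsto\min\{0,e-x\}$ gives the right sign.
\item If $i-1>K$, we combine it with the index-$i$ term. The net contribution from the coefficient change is $\min\{0,e-\Delta_{i-1}^{K+1}\}-\min\{0,e-\Delta_i\}$ (with unit coefficient difference), and this is $\ge0$ because $\Delta_{i-1}^{K+1}\le\Delta_i^{K+1}=\Delta_i$ by Lemma~\ref{lem:monotonicity}.
\end{itemize}
The remaining part, with equal coefficients $K$, is nonnegative by monotonicity. Nagumo's theorem then gives invariance, the jumps cause no trouble, and $D_i(T)=0$ starts us inside the region. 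The main obstacle is exactly this bookkeeping of $\Psi$ differences where the coefficients change. If the face argument is awkward because of the nonsmooth min, I would instead use a coupling or policy argument: any policy for $K$ servers is feasible for $K+1$, which gives $V^{K+1}\le V^K$. Marginal comparisons need more than this, though, so the ODE route is preferred.

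For part 1 (more inventory), set $D_i:=\Delta_i^{N,K}-\Delta_i^{N+1,K}$ for $i\le N$, and again check the faces $D_i=0$. For $i<N$ the equations have identical structure, so the argument above applies with equal coefficients and is easier. The new feature is the top index $i=N$. There the $N$-system has the term $\lambda(\pi-\Delta_N^{N})$, while the $(N+1)$-system has $\lambda(\Delta_{N+1}^{N+1}-\Delta_N^{N+1})$. On the face, the difference of these terms is $\lambda(\pi-\Delta_{N+1}^{N+1})\ge0$ by the bound $\Delta_{N+1}\le\pi$ in Lemma~\ref{lem:monotonicity}. Thus the boundary state of the smaller system acts as a harsher truncation, and invariance follows in the same way.
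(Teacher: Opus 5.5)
Your proposal is correct and follows essentially the same route as the paper: a reversed-time Nagumo invariance argument on the nonnegative orthant of marginal-cost differences, with the face checks split by whether the caps $i\wedge K$ and $i\wedge(K+1)$ differ, and the top face of the $N$ versus $N+1$ comparison handled through $\Delta_{N+1}^{N+1,K}\le\pi$ from Lemma~\ref{lem:monotonicity}. The differences are cosmetic. For $i\ge K+2$ you use the ordering $\Delta_{i-1}^{N,K+1}\le\Delta_i^{N,K+1}$ of the $(K+1)$-system, whereas the paper uses the ordering of the $K$-system. You also make explicit the embedding into $\mathcal C\times\mathcal C$, which the paper leaves implicit when it invokes Lemma~\ref{lem:monotonicity} in its boundary checks.
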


At a fixed state $i$, increasing $N$ provides more ready units and a larger buffer against demand. Losing one additional ready unit is less likely to cause stockout and has a smaller marginal continuation cost. Increasing $K$ instead gives the system greater capacity to replenish depleted units before stockout occurs. The effect of losing one ready unit can thus be more readily offset by replenishment, reducing its marginal continuation cost.

The main technical challenge is that changing $N$ or $K$ changes the HJB system itself, so the marginal continuation costs of the two systems cannot be compared directly. The proof provided in Appendix \ref{app:capacity_control_interaction} uses Nagumo's theorem to establish the invariance of the region in which the larger-capacity system has no greater marginal continuation costs. On each boundary face, we show that the backward-time derivative points into this region; for example,
\begin{equation*}
-\frac{d}{dt}
\bigl(\Delta_i^{N,K}(t)-\Delta_i^{N+1,K}(t)\bigr)\ge0
\end{equation*}
whenever $\Delta_i^{N,K}(t)=\Delta_i^{N+1,K}(t)$ and $\Delta_j^{N,K}(t)\ge\Delta_j^{N+1,K}(t)$ for every common state $j$. The key distinction between the two comparisons lies in where their HJB systems differ. For the comparison in $N$, the two marginal-cost HJB systems have the same interior structure over their common states, but differ at the upper boundary because the $(N+1)$-unit system has one additional state. For the comparison in $K$, the maximum feasible actions $i\wedge K$ and $i\wedge(K+1)$ coincide for $i\le K$ and differ for $i\ge K+1$. The full proof is detailed in Appendix \ref{app:capacity_control_interaction}.

The above proposition directly leads to the following Theorem \ref{theo:capacity_control_interaction}.

\begin{theorem}
\label{theo:capacity_control_interaction}
{\rm
Denote $i_{N,K}^{\min}(t):=\min\mathcal I_t^{N,K}$ as the lowest/smallest state among all the activation states in a system with capacities $N$ and $K$. If $\mathcal I_t^{N,K}=\emptyset$, set $i_{N,K}^{\min}(t):=N+1$. Then, for every $t\in[0,T]$, the following statements hold.
\begin{enumerate}
    \item For every fixed $K$ and every $N\ge1$, $i_{N+1,K}^{\min}(t)\ge i_{N,K}^{\min}(t)$.

    \item For every fixed $N$ and every $K\ge1$, $i_{N,K+1}^{\min}(t)\ge i_{N,K}^{\min}(t)$.
\end{enumerate}
}
\end{theorem}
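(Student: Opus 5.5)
The plan is to derive Theorem \ref{theo:capacity_control_interaction} directly from Proposition \ref{pro:capacity_marginal_cost}. The key observation is a set inclusion: a larger-capacity system has a smaller set of activation states. Taking minima then gives the threshold inequality, once the convention for empty sets is handled.

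For part 1, fix $K$, $N$ and $t\in[0,T]$. First I will show $\mathcal I_t^{N+1,K}\cap\{1,\ldots,N\}\subseteq\mathcal I_t^{N,K}$. Take $i\le N$ with $\Delta_i^{N+1,K}(t)>e(t)$. Part 1 of Proposition \ref{pro:capacity_marginal_cost} gives $\Delta_i^{N,K}(t)\ge\Delta_i^{N+1,K}(t)>e(t)$, so $i\in\mathcal I_t^{N,K}$. The only state of the larger system not covered by this is $i=N+1$.

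I then split into cases.
\begin{itemize}
\item If $\mathcal I_t^{N+1,K}$ contains some $i\le N$, then $i_{N+1,K}^{\min}(t)=i$ for the smallest such $i$. This $i$ lies in $\mathcal I_t^{N,K}$, so $i_{N,K}^{\min}(t)\le i=i_{N+1,K}^{\min}(t)$.
\item If $\mathcal I_t^{N+1,K}\subseteq\{N+1\}$, then either $i_{N+1,K}^{\min}(t)=N+1$ (the set is $\{N+1\}$) or $i_{N+1,K}^{\min}(t)=N+2$ (the set is empty, by convention). In both cases $i_{N+1,K}^{\min}(t)\ge N+1\ge i_{N,K}^{\min}(t)$, because $i_{N,K}^{\min}(t)\le N+1$ always holds under the convention.
\end{itemize}

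For part 2 the state spaces coincide, so the argument is simpler. Part 2 of Proposition \ref{pro:capacity_marginal_cost} gives $\Delta_i^{N,K}(t)\ge\Delta_i^{N,K+1}(t)$ for every $i=1,\ldots,N$, hence $\mathcal I_t^{N,K+1}\subseteq\mathcal I_t^{N,K}$. Minima over nested sets reverse the inclusion, so $i_{N,K+1}^{\min}(t)\ge i_{N,K}^{\min}(t)$. If $\mathcal I_t^{N,K+1}=\emptyset$, the left side equals $N+1$, which is at least any minimum of a subset of $\{1,\ldots,N\}$ and also equals the convention value.

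All the analytical work is contained in Proposition \ref{pro:capacity_marginal_cost}, which I take as given, so I expect no real obstacle. The only care needed is bookkeeping: state $N+1$ exists in the larger system in part 1, the empty-set conventions differ ($N+1$ versus $N+2$), and the case $t=T$ must be covered. At $t=T$ all $\Delta$'s vanish, both activation sets are empty, and the inequalities reduce to $N+2\ge N+1$ in part 1 and $N+1\ge N+1$ in part 2.
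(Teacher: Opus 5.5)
Your proposal is correct and matches the paper's proof: it derives the same inclusions $\mathcal I_t^{N+1,K}\cap\{1,\ldots,N\}\subseteq\mathcal I_t^{N,K}$ and $\mathcal I_t^{N,K+1}\subseteq\mathcal I_t^{N,K}$ from Proposition~\ref{pro:capacity_marginal_cost}, and uses the same case split on whether $i_{N+1,K}^{\min}(t)\le N$ or equals $N+1$ or $N+2$ under the empty-set convention. Your separate check at $t=T$ is harmless but not needed, since the proposition already holds on all of $[0,T]$.
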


\begin{proof}
Fix $t\in[0,T]$. If $i\in\mathcal I_t^{N+1,K}\cap\{1,\ldots,N\}$, then $\Delta_i^{N+1,K}(t)>e(t)$. By Proposition \ref{pro:capacity_marginal_cost}, $\Delta_i^{N,K}(t)\ge\Delta_i^{N+1,K}(t)>e(t)$, so $i\in\mathcal I_t^{N,K}$. The comparison between $K$ and $K+1$ follows analogously. Hence,
\begin{equation}
\label{eq:capacity_activation_set_comparisons}
\mathcal I_t^{N+1,K}\cap\{1,\ldots,N\}
\subseteq\mathcal I_t^{N,K},\qquad
\mathcal I_t^{N,K+1}
\subseteq\mathcal I_t^{N,K}.
\end{equation}

Under the convention for an empty activation set, the first inclusion implies $i_{N+1,K}^{\min}(t)\ge i_{N,K}^{\min}(t)$. Indeed, if $i_{N+1,K}^{\min}(t)\le N$, then it belongs to $\mathcal I_t^{N,K}$; otherwise, it equals $N+1$ or $N+2$, while $i_{N,K}^{\min}(t)\le N+1$. Since the systems with $K$ and $K+1$ servers have the same state space, the second inclusion directly implies $i_{N,K+1}^{\min}(t)\ge i_{N,K}^{\min}(t)$. This proves the theorem.
\end{proof}

The above theorem translates the marginal-cost comparisons in Proposition \ref{pro:capacity_marginal_cost} into monotonicity of the activation threshold with respect to resource capacities. Increasing $N$ provides a larger inventory buffer, while increasing $K$ provides greater replenishment capacity. Both reduce the marginal continuation cost of an additional depleted unit and thereby increase the number of depleted units required to trigger replenishment.

\section{Cyclic Models}
\label{sec:cyclic_model}

Many replenishment systems face demand and operating costs that vary over time but repeat over daily, weekly, or other cycles. In the EV example, electricity prices follow daily time-of-use (TOU) schedules, while EV arrivals (at a battery-swapping station) vary with the time of day.

Suppose that the planning horizon consists of $M$ cycles of equal length $L$, so that $T=ML$. We assume that the arrival rate and operating cost repeat the same profiles in every cycle. Specifically, there exist functions $\widetilde{\lambda}$ and $\widetilde e$ such that
\begin{equation}
\label{eq:cyclic_parameters}
\lambda((m-1)L+s)=\widetilde{\lambda}(s),\quad
e((m-1)L+s)=\widetilde e(s); \qquad
s\in[0,L),\quad m=1,\ldots,M.
\end{equation}
Similarly, following the definition of $\mathcal I_t$ in (\ref{activate}) and $i_t^{\min}$ in Theorem \ref{theo:threshold}, 
denote the cycle-dependent set of activation states as $\mathcal I_{(m-1)L+s}$, and $i_{(m-1)L+s}^{\min}$ the minimal state thereof. 

Below, in \S\ref{sec:cross_cycle}, we examine how the optimal control changes from one cycle to the next. In \S\ref{sec:peak_offpeak}, we show that the optimal control can reduce to a simple peak-period shutdown and off-peak full activation form under certain conditions, for piecewise-constant $e$.

\subsection{Threshold Ordering across Cycles}
\label{sec:cross_cycle}

\begin{theorem}
\label{theo:cross_cycle}
{\rm
Under \eqref{eq:cyclic_parameters}, for every $m=1,\ldots,M-1$, $i=1,\ldots,N$, and $s\in[0,L]$,
\begin{equation}
\Delta_i((m-1)L+s)\geq\Delta_i(mL+s).
\label{eq:cross_cycle_marginal_cost}
\end{equation}
Consequently, for every $m=1,\ldots,M-1$ and $s\in[0,L)$, we have
$\mathcal I_{(m-1)L+s}\supseteq\mathcal I_{mL+s}$; and hence, $i_{(m-1)L+s}^{\min}\leq i_{mL+s}^{\min}$.
\hfill$\Box$
}
\end{theorem}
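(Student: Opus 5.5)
The plan is to compare the marginal costs one cycle apart along the same in-cycle time. Fix $m$ and set
\[
D_i(s):=\Delta_i((m-1)L+s)-\Delta_i(mL+s), \qquad s\in[0,L].
\]
By \eqref{eq:cyclic_parameters}, both $t\mapsto\Delta(t)$ on $[(m-1)L,mL]$ and $t\mapsto\Delta(t+L)$ on the same interval solve the \emph{same} backward marginal-cost system. That system is obtained from \eqref{eq:piecewise_hjb} by differencing:
\[
-\dot\Delta_i=\lambda(\Delta_{i+1}-\Delta_i)+\Psi_i-\Psi_{i-1},
\]
with $\Psi_i=(i\wedge K)\min\{0,e-\Delta_i\}$ and the boundary convention that the $i=N$ equation uses $\pi-\Delta_N$ in place of $\Delta_{N+1}-\Delta_N$. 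The coefficients $\lambda$ and $e$ are identical at $t$ and $t+L$. The only difference between the two solutions is their terminal data at $t=mL$, namely $\Delta(mL)$ versus $\Delta((m+1)L)$.

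\textbf{Step 1 (base comparison, $m=M-1$).} Here the second copy has terminal value $\Delta(T)=0$, while by Lemma~\ref{lem:monotonicity} $\Delta(mL)\ge0$ componentwise. So the terminal ordering $D_i(L)\ge0$ holds.

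\textbf{Step 2 (comparison principle).} I would show that the marginal-cost system is \emph{cooperative}, i.e.\ order-preserving backward in time: if two solutions on a common interval with the same coefficients satisfy $\Delta^{(1)}\ge\Delta^{(2)}$ componentwise at the right endpoint, the ordering persists on the whole interval. Following the method of Proposition~\ref{pro:capacity_marginal_cost}, I apply Nagumo's theorem in reversed time to the region $\{\Delta^{(1)}_j\ge\Delta^{(2)}_j\ \forall j\}$. On the face $\Delta^{(1)}_i=\Delta^{(2)}_i$, the backward derivative of $D_i$ equals
\[
\lambda\bigl(D_{i+1}-D_i\bigr)+\bigl(\Psi_i^{(1)}-\Psi_i^{(2)}\bigr)-\bigl(\Psi_{i-1}^{(1)}-\Psi_{i-1}^{(2)}\bigr).
\]
Each term is nonnegative on that face:
\begin{itemize}
\item $D_{i+1}\ge0$ and $D_i=0$, so $\lambda(D_{i+1}-D_i)\ge0$;
\item $\Psi_i^{(1)}=\Psi_i^{(2)}$, since $\Psi_i$ depends only on $\Delta_i$;
\item $\Psi_{i-1}$ is nonincreasing in $\Delta_{i-1}$, so $-(\Psi_{i-1}^{(1)}-\Psi_{i-1}^{(2)})\ge0$;
\item at $i=N$ the term $\pi-\Delta_N$ contributes $-\lambda D_N=0$.
\end{itemize}
The vector field is locally Lipschitz, since $\Psi$ is Lipschitz in $\Delta$, so Nagumo (or a standard Kamke-type comparison) applies on each continuity interval $(\tau_k,\tau_{k+1})$. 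Continuity of $\Delta$ across the jump points $\tau_k$ lets the ordering be carried across intervals, exactly as in the piecewise construction of Appendix~\ref{app:proof_hjb}.

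\textbf{Step 3 (induction over cycles).} Working backward in $m$: once we know $\Delta((m-1)L+L)\ge\Delta(mL+L)$, i.e.\ $D(L)\ge 0$, Step 2 gives $D(s)\ge0$ for all $s\in[0,L]$. Evaluating at $s=0$ then supplies the terminal ordering $\Delta((m-1)L)\ge\Delta(mL)$ needed for the previous cycle. Equivalently, and more simply, one can argue directly with the time-$L$ shift: for each $m$, both $\Delta(\cdot)$ and $\Delta(\cdot+L)$ solve the same system on $[0,T-L]$, and at the right endpoint $T-L$ we have $\Delta(T-L)\ge0=\Delta(T)$. A single application of Step 2 on $[0,T-L]$ then yields \eqref{eq:cross_cycle_marginal_cost} for all $m$ simultaneously.

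\textbf{Step 4 (conclusion).} The inclusion $\mathcal I_{(m-1)L+s}\supseteq\mathcal I_{mL+s}$ follows because $e((m-1)L+s)=e(mL+s)$. Hence $\Delta_i(mL+s)>e$ implies $\Delta_i((m-1)L+s)>e$. Taking minima, with the convention $\min\emptyset=N+1$, gives the threshold ordering.

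\textbf{Main obstacle.} The hard step is Step 2, specifically the cooperativity check in the presence of the nonsmooth $\min$ in $\Psi$ and the jump points of $\lambda$ and $e$. I expect it to go through because of the sign observation above ($\Psi_{i-1}$ is nonincreasing in $\Delta_{i-1}$) together with Lipschitz continuity.
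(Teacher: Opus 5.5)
Your proposal is correct and follows essentially the same route as the paper: a comparison principle, proved with Nagumo's theorem on the nonnegative orthant in reversed time for two solutions of the same marginal-cost system, with the same boundary-face sign check (using that $\min\{0,e-z\}$ is non-increasing in $z$), seeded by $\Delta_i((M-1)L)\ge 0=\Delta_i(T)$ from Lemma~\ref{lem:monotonicity} and propagated backward across cycles. Your alternative of a single comparison of $\Delta(\cdot)$ and $\Delta(\cdot+L)$ on $[0,T-L]$ is only a compact repackaging of the paper's cycle-by-cycle recursion, not a different argument.
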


The intuition behind the above theorem is straightforward: at the same relative time in two consecutive cycles, the earlier cycle has a longer remaining horizon and faces more future demand, increasing the risk of stockout and making replenishment more valuable.

To derive this ordering from the non-stationary HJB system, the main technical challenge is to compare marginal continuation costs at two times one full cycle apart, e.g., $\Delta_i((m-1)L+s)$ and $\Delta_i(mL+s)$. The marginal costs evolve according to a non-stationary HJB system and do not have explicit-form solutions. Although the model parameters are identical at these two times, the remaining horizons are different. A direct comparison is therefore not available.

The proof in Appendix \ref{app:cross_cycle} will address this challenge through a recursive comparison across cycles. For two consecutive cycles, we subtract the marginal-cost HJB equations evaluated at corresponding times and apply Nagumo's theorem to the resulting difference system. This shows that if $\Delta_i(mL)\geq\Delta_i((m+1)L)$ at the ends of the two cycles, then 
\begin{equation}
\Delta_i((m-1)L+s)\geq\Delta_i(mL+s),\qquad s\in[0,L].
\end{equation}
Starting from $\Delta_i((M-1)L)\geq0=\Delta_i(T)$ and applying this comparison recursively backward across cycles yields \eqref{eq:cross_cycle_marginal_cost}. The full proof is given in Appendix \ref{app:cross_cycle}.

\subsection{Peak--Off-Peak Control}
\label{sec:peak_offpeak}

Piecewise-constant operating costs often follow recurring patterns, such as daily peak and off-peak electricity prices in the EV example.
Let a fraction $\rho\in(0,1)$ of each cycle be designated as the peak period. For $m=1,\ldots,M$, define
\begin{equation}
\begin{aligned}
H_m&=[(m-1)L,(m-1+\rho)L),\\
L_m&=[(m-1+\rho)L,mL).
\end{aligned}
\end{equation}
The operating cost follows
\begin{equation}
e(t)=
\begin{cases}
e_{\mathrm H},&t\in H_m,\\
e_{\mathrm L},&t\in L_m,
\end{cases}
\qquad e_{\mathrm H}>e_{\mathrm L}>0.
\end{equation}
Recall that $\Lambda(t,s)=\int_t^s\lambda(r)\,dr$. By periodicity, the cumulative demand during every peak period is $\Lambda_{\mathrm H}:=\Lambda\bigl(mL,(m+\rho)L\bigr)=\int_0^{\rho L}\widetilde{\lambda}(r)\,dr$ for $m=0,\ldots,M-1$.

In practice, a common way to reduce operating costs under time-varying operating costs (e.g., TOU electricity prices) is load shifting, where operations are shifted from peak to off-peak periods. An extreme form of load shifting shuts down all available servers during peak periods and fully activates them during off-peak periods. We shall call this a ``peak-period shutdown and off-peak full activation'' policy. The following proposition gives these two conditions using model primitives.

\begin{pro}
\label{prop:peak_offpeak}
{\rm
Fix a nonterminal cycle $m\in\{1,\ldots,M-1\}$. If
\begin{equation}
\label{eq:peak_offpeak_condition}
\begin{aligned}
e_{\mathrm H}
&\geq\pi\left(1-e^{-\Lambda((m-1)L,T)}\right)
\geq\Delta_N\bigl((m-1)L\bigr),\\
e_{\mathrm L}
&<\pi\left[1-e^{-\Lambda_{\mathrm H}}
\sum_{k=0}^{N-1}\frac{\Lambda_{\mathrm H}^{k}}{k!}\right]
\leq\Delta_1(mL),
\end{aligned}
\end{equation}
then cycle $m$ exhibits peak-period shutdown and off-peak full activation:
\begin{equation*}
u^*(t,i)=
\begin{cases}
0,&t\in H_m,\\
i\wedge K,&t\in L_m,
\end{cases}
\qquad i=0,\ldots,N.
\qquad\qquad\qquad\qquad \Box
\end{equation*}
}
\end{pro}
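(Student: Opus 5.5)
The plan is to prove the two halves of the conclusion separately, each by bounding $\Delta_i(t)$ across the whole period using monotonicity in time within that period, then comparing with $e_{\mathrm H}$ or $e_{\mathrm L}$ via Proposition \ref{pro:optimal_control}. Throughout, Lemma \ref{lem:monotonicity} reduces everything to the extreme states: it suffices to show $\Delta_N(t)\le e_{\mathrm H}$ on $H_m$, since then $\Delta_i(t)\le e_{\mathrm H}$ for all $i$, and to show $\Delta_1(t)>e_{\mathrm L}$ on $L_m$, since then every $i\ge1$ is an activation state. State $0$ has $u^*=0=0\wedge K$ trivially.

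For the peak period I will prove the upper bound $\Delta_N(t)\le \pi(1-e^{-\Lambda(t,T)})$ for all $t$. The paper already invokes the companion inequality $\pi-\Delta_N(t)\ge\pi e^{-\Lambda(t,T)}$ (equation \eqref{eq:threshold_delta_N_upper_bound} in Appendix \ref{app:DC}), so I will take this as given. If it needs a justification, the probabilistic argument is the following. Compare state $N$ with state $N-1$ under a coupling in which the $(N-1)$-system mimics the optimal policy of the $N$-system. The extra cost is at most $\pi$, and it is incurred only if at least one arrival occurs before $T$; with probability $e^{-\Lambda(t,T)}$ there is no arrival and the difference is $0$. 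Since $\Lambda(t,T)$ is decreasing in $t$, for $t\in H_m\subseteq[(m-1)L,T]$ I get
\begin{equation*}
\Delta_N(t)\le\pi\bigl(1-e^{-\Lambda((m-1)L,T)}\bigr)\le e_{\mathrm H}
\end{equation*}
by the first hypothesis. So no state is an activation state on $H_m$. The second inequality in the first line of the hypothesis is exactly this bound evaluated at $(m-1)L$, so it is automatic.

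For the off-peak period, note that on $L_m$ the cost is constant ($\dot e=0$) and has no jump inside $L_m$; the only jump, up to $e_{\mathrm H}$ at $mL$, is at the right endpoint. Corollary \ref{cor:increasing_operating_cost} applied on $[(m-1+\rho)L,mL]$ then shows $i^{\min}_t$ is increasing there, i.e.\ $\mathcal I_t$ shrinks as $t$ increases. It therefore suffices to show $1\in\mathcal I_t$ for $t$ arbitrarily close to $mL$ from the left. By continuity of $\Delta_1$ (Proposition \ref{pro:optimal_control}'s construction) and $e\equiv e_{\mathrm L}$ on $L_m$, this follows from $\Delta_1(mL)>e_{\mathrm L}$. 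That strict inequality comes from the second hypothesis: $e_{\mathrm L}<\pi[1-e^{-\Lambda_{\mathrm H}}\sum_{k<N}\Lambda_{\mathrm H}^k/k!]\le\Delta_1(mL)$. The right inequality is part of the assumption; if it must be derived, the argument runs as follows. At time $mL$ the next period $H_{m+1}$ is peak, and by the first part (or directly) no replenishment occurs there. A system starting with one fewer ready unit loses one extra demand during $H_{m+1}$ whenever at least $N$ arrivals occur, which has probability $1-e^{-\Lambda_{\mathrm H}}\sum_{k<N}\Lambda_{\mathrm H}^k/k!$. Making this coupling rigorous is the main obstacle, because the optimal policy of the better-stocked system may differ. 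I would first try to prove it by a sample-path coupling in which the better system uses the worse system's optimal policy, which gives a valid upper bound on $V(mL,0)$ relative to $V(mL,1)$. Monotonicity of $\mathcal I_t$ then extends $1\in\mathcal I_t$ backward to all $t\in L_m$, giving $u^*=i\wedge K$ there.

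A subtle point is that Corollary \ref{cor:increasing_operating_cost} needs the no-downward-jump condition only at discontinuities in $(a,b]$. The jump at $b=mL$ is upward, and the downward jump at the left end $(m-1+\rho)L$ sits at $a$, so it is excluded. I will check this alignment carefully; if needed, I will apply the corollary on $[a+\epsilon,mL]$ and let $\epsilon\downarrow0$, using right-continuity.
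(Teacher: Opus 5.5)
Your proposal is correct and follows essentially the paper's proof: shutdown on $H_m$ from $\Delta_i(t)\le\Delta_N(t)\le\pi(1-e^{-\Lambda(t,T)})\le e_{\mathrm H}$, and full activation on $L_m$ from $\Delta_1(mL)>e_{\mathrm L}$, continuity at $mL$, Lemma~\ref{lem:monotonicity}, and Corollary~\ref{cor:increasing_operating_cost} on $[(m-1+\rho)L,mL]$ with the endpoint bookkeeping you describe; your direct use of the monotonicity in $t$ of this bound on all of $H_m$ is cleaner than the paper's endpoint-plus-Corollary~\ref{cor:increasing_operating_cost} argument. Two corrections to your optional asides: the coupling for the $\Delta_N$ bound must run the other way (the $N$-system follows the $(N-1)$-system's optimal policy, which is feasible since $j\wedge K\le(j+1)\wedge K$, whereas your version yields a lower bound on $\Delta_N$), and the ``main obstacle'' for $\Delta_1(mL)$ is not really there, because your first part already gives the optimal action $0$ in every state on $H_{m+1}$ (as $\Lambda(t,T)\le\Lambda((m-1)L,T)$ there), so the Markov property at $(m+\rho)L$ yields the paper's exact identity $\Delta_1(mL)=\pi\Pr\{A\ge N\}+\sum_{k=0}^{N-1}\Pr\{A=k\}\,\Delta_{1+k}\bigl((m+\rho)L\bigr)$ with $A\sim\mathrm{Poisson}(\Lambda_{\mathrm H})$, whose sum is nonnegative by Lemma~\ref{lem:monotonicity}, with no policy-mimicking needed.
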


The first condition in \eqref{eq:peak_offpeak_condition} requires the peak-period operating cost to be high relative to the future demand exposure from the beginning of cycle $m$ to the end of the horizon. The term $1-e^{-\Lambda((m-1)L,T)}$ is the probability that at least one demand arrives during this remaining horizon. Thus, the peak-period operating cost is high enough that replenishment is not worthwhile even when future demand is likely. This leads to shutdown during the peak periods.

The second condition compares the off-peak operating cost with the probability that at least $N$ demands arrive during $H_{m+1}$, which is the probability that the entire resource pool can be exhausted during that peak period. If the off-peak operating cost is below the expected penalty associated with this event, it is worthwhile to replenish as much as possible before entering the next peak period. This gives full activation during $L_m$.

The first condition is relatively straightforward, 
as the second inequality there 
gives a primitive upper bound on $\Delta_N(t)$, which follows from Lemma \ref{lem:monotonicity} (and its proof in Appendix \ref{app:DC}).
The main difficulty is to obtain a primitive lower bound on $\Delta_1(mL)$ for the second condition. The proof in Appendix \ref{app:cross_cycle} will first use the first condition to show that the system shuts down during $H_{m+1}$. With no replenishment in this period, two systems starting one state apart can be coupled under the same demand arrivals. This gives an exact decomposition of $\Delta_i(mL)$ into the probability of one additional lost demand and a nonnegative continuation term. Dropping the continuation term yields the Poisson-tail lower bound on $\Delta_1(mL)$ used in the second condition.
The full proof is detailed in Appendix \ref{app:cross_cycle}. 

\section{Extension to Phase-type Distributions}
\label{sec:extension}

The exponential replenishment-time assumption makes the number of depleted units a one-dimensional Markov state. We now extend the model to phase-type replenishment times. As discussed in \S4.2.1 (p.~76) of \cite{chenyao}, a phase-type distribution is the absorption time of a finite-state continuous-time Markov chain. Phase-type distributions are dense in the class of distributions on the nonnegative real line under weak convergence; indeed, finite mixtures of Erlang distributions with a common scale parameter already form a dense subclass \cite{asmussen} (Theorem~III.4.2). Thus, the finite-state representation below can approximate general replenishment-time distributions arbitrarily closely while preserving a finite-state continuous-time MDP formulation.

Let $\mathcal J=\{1,\ldots,J\}$ denote the set of transient replenishment phases. A newly depleted unit enters phase $j$ with probability $\alpha_j$, where $\alpha_j\geq0$ and $\sum_{j=1}^J\alpha_j=1$. While a phase-$j$ unit is actively replenished, it moves to phase $\ell\neq j$ at rate $\nu_{j\ell}$ and completes replenishment at rate $\nu_{j0}$. All transition rates are nonnegative, and the resulting transient Markov chain is assumed to reach replenishment completion with probability one. The phase occupied by each depleted unit is observable and remains unchanged whenever replenishment is interrupted. Thus, the control is preemptive-resume at the phase level.

Let $\boldsymbol{x}=(x_1,\ldots,x_J)$ record the numbers of depleted units in the respective phases, and write $i=\sum_{j=1}^Jx_j$. The state and action spaces are
\begin{equation}
\label{eq:ph_state_action}
\begin{aligned}
\mathcal X&=\left\{\boldsymbol{x}\in\mathbb Z_+^J:\sum_{j=1}^Jx_j\leq N\right\},\\
\mathcal U(\boldsymbol{x})&=\left\{\boldsymbol{u}\in\mathbb Z_+^J:0\leq u_j\leq x_j,\ \sum_{j=1}^Ju_j\leq K\right\},
\end{aligned}
\end{equation}
where $u_j$ is the number of phase-$j$ units being replenished. Let $\boldsymbol{e}_j$ be the $j$th unit vector. For $x_j>0$, define the state following a transition from phase $j$ to phase $\ell$ by
\begin{equation}
\label{eq:ph_successor}
S_{j\ell}(\boldsymbol{x})=
\begin{cases}
\boldsymbol{x}-\boldsymbol{e}_j+\boldsymbol{e}_\ell,
&\ell\in\mathcal J\setminus\{j\},\\
\boldsymbol{x}-\boldsymbol{e}_j,
&\ell=0.
\end{cases}
\end{equation}
Accordingly, the nonzero transition rates are
\begin{equation}
\label{eq:ph_transition_rates}
\begin{aligned}
q_{\boldsymbol{x},\boldsymbol{x}+\boldsymbol{e}_j}(t,\boldsymbol{u})
&=\lambda(t)\alpha_j,
&&i<N,\ j\in\mathcal J,\\
q_{\boldsymbol{x},S_{j\ell}(\boldsymbol{x})}(t,\boldsymbol{u})
&=u_j\nu_{j\ell},
&&x_j>0,\ j\in\mathcal J,\
\ell\in\mathcal J\setminus\{j\},\\
q_{\boldsymbol{x},S_{j0}(\boldsymbol{x})}(t,\boldsymbol{u})
&=u_j\nu_{j0},
&&x_j>0,\ j\in \mathcal J.
\end{aligned}
\end{equation}
An arrival in a state with $i=N$ is lost and leaves the state unchanged. The instantaneous cost rate is
\begin{equation}
\label{eq:ph_running_cost}
c(t,\boldsymbol{x},\boldsymbol{u})=e(t)\sum_{j=1}^Ju_j+\pi\lambda(t)\mathbf 1_{\{i=N\}}.
\end{equation}

Let $V(t,\boldsymbol{x})$ denote the minimum expected cost from state $\boldsymbol{x}$ at time $t$. On each continuity interval of $\lambda(t)$ and $e(t)$, its HJB equation is
\begin{eqnarray}
\label{eq:ph_hjb}
&&\quad -\dot V(t,\boldsymbol{x})\,=\, \pi\lambda(t)\mathbf 1_{\{i=N\}}+\lambda(t)\mathbf 1_{\{i<N\}}\sum_{j=1}^J\alpha_j\left[V(t,\boldsymbol{x}+\boldsymbol{e}_j)-V(t,\boldsymbol{x})\right] \\
&&+\min_{\boldsymbol{u}\in\mathcal U(\boldsymbol{x})}\sum_{j:x_j>0}u_j\Big\{e(t)
+\sum_{\ell\in\mathcal J\setminus\{j\}}\nu_{j\ell}\left[V(t,S_{j\ell}(\boldsymbol{x}))-V(t,\boldsymbol{x})\right]  
+\nu_{j0}\left[V(t,S_{j0}(\boldsymbol{x}))-V(t,\boldsymbol{x})\right]\Big\} , \nonumber
\end{eqnarray}
with $V(T,\boldsymbol{x})=0$, for all $\boldsymbol{x}\in\mathcal X$.
Although the optimal action now depends on the complete phase composition, the minimization in \eqref{eq:ph_hjb} admits an exact ranking structure. For each occupied phase, define its phase-specific marginal value rate by
\begin{equation}
\label{eq:ph_ranking_index}
R_j(t,\boldsymbol{x}):=
\sum_{\ell\in\mathcal J\setminus\{j\}}\nu_{j\ell}\left[V(t,\boldsymbol{x})-V(t,S_{j\ell}(\boldsymbol{x}))\right]
+\nu_{j0}\left[V(t,\boldsymbol{x})-V(t,S_{j0}(\boldsymbol{x}))\right],\quad x_j>0.
\end{equation}
Thus, $R_j(t,\boldsymbol{x})$ is the instantaneous expected reduction in continuation cost generated by actively replenishing a phase-$j$ unit, accounting for all of its possible successor phases.

\begin{pro}
\label{pro:phase_type_ranking}
{\rm
At every time-state pair $(t,\boldsymbol{x})$, rank all depleted units in decreasing order of the marginal value rate $R_j(t,\boldsymbol{x})$ associated with their current phases. An optimal Markov policy activates the first
\begin{equation}
\label{eq:ph_number_activated}
\min\left\{K,\sum_{j:x_j>0}x_j\mathbf 1_{\{R_j(t,\boldsymbol{x})>e(t)\}}\right\}
\end{equation}
units in this ranking. Ties may be resolved arbitrarily, and units with $R_j(t,\boldsymbol{x})=e(t)$ are left inactive by convention.
}
\end{pro}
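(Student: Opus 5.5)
The plan is to reduce the statement to a knapsack-type minimization inside the HJB equation \eqref{eq:ph_hjb}, and then appeal to a verification argument of the same type as the one behind Proposition \ref{pro:optimal_control}.

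First, I would rewrite the bracketed term in \eqref{eq:ph_hjb} using \eqref{eq:ph_ranking_index}. For each occupied phase $j$ the bracket equals $e(t)-R_j(t,\boldsymbol{x})$. Hence the minimization becomes
\begin{equation*}
\min_{\boldsymbol{u}\in\mathcal U(\boldsymbol{x})}\ \sum_{j:x_j>0}u_j\bigl(e(t)-R_j(t,\boldsymbol{x})\bigr).
\end{equation*}
This is linear in $\boldsymbol{u}$. The constraints are the box constraints $0\le u_j\le x_j$ and a single capacity constraint $\sum_j u_j\le K$. It is therefore a fractional-knapsack problem with unit weights over integers.

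Second, I would solve this problem exactly by an exchange argument. The objective is a sum over individual units, with unit $k$ in phase $j$ contributing $e(t)-R_j$ if activated and $0$ otherwise. Any feasible $\boldsymbol{u}$ thus corresponds to a set of at most $K$ units. The minimum is attained by selecting the units with the most negative coefficients, i.e., the largest $R_j$, and only those with $R_j>e(t)$.

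To make this rigorous, take any optimal $\boldsymbol{u}$ and apply two modifications, neither of which increases the cost:
\begin{itemize}
\item drop any activated unit with $R_j\le e(t)$;
\item if some inactive unit has a strictly larger $R$ than an active one, or if fewer than $K$ units are active while an inactive unit has $R>e(t)$, swap it in or add it.
\end{itemize}
After finitely many such steps the greedy selection is reached. The number of activated units is then exactly \eqref{eq:ph_number_activated}. Ties and the $R_j=e(t)$ case do not affect the value, which justifies the stated convention.

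Third, I would lift the pointwise minimizer to an optimal Markov policy. As in Appendix \ref{app:proof_hjb}, I would construct $V$ piecewise on the continuity intervals $(\tau_k,\tau_{k+1})$ defined in \eqref{jump}, working backward from $V(T,\cdot)=0$. The right-hand side of \eqref{eq:ph_hjb} is a minimum of finitely many affine functions of $V$, with coefficients that are bounded and piecewise continuous in $t$. It is therefore Lipschitz in $V$, so a unique solution exists that is continuous on $[0,T]$ and $C^1$ on each open interval.

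The greedy selector $\boldsymbol{u}^*(t,\boldsymbol{x})$ chosen with a fixed tie-breaking rule is piecewise measurable in $t$. The question is whether it induces a well-defined controlled chain. This is the step I expect to be the main obstacle. The selector may switch infinitely often if $R_j-e$ oscillates around zero, and ties between phases could also fluctuate.

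My first approach is to avoid needing regularity of $\boldsymbol{u}^*$ itself. The chain is finite-state with bounded rates, so any measurable Markov control yields a well-defined nonhomogeneous CTMC. Dynkin's formula applied to $V(s,\boldsymbol{X}(s))$ then gives
\begin{itemize}
\item $J^{\boldsymbol{u}}\ge V$ for every admissible control, and
\item equality for $\boldsymbol{u}^*$, because $\boldsymbol{u}^*$ attains the minimum in \eqref{eq:ph_hjb} at every $(t,\boldsymbol{x})$.
\end{itemize}
This is exactly the verification used for Proposition \ref{pro:optimal_control}, so I would cite that argument and only check that the finite phase-type state space introduces nothing new.
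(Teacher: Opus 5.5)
Your proposal is correct and follows the paper's proof. You rewrite the action-dependent part of \eqref{eq:ph_hjb} as $\sum_{j:x_j>0}u_j\bigl(e(t)-R_j(t,\boldsymbol{x})\bigr)$, select greedily under the single capacity constraint, and invoke the HJB/Dynkin verification of Proposition~\ref{pro:optimal_control}; your exchange argument and your remark that any measurable Markov selector gives a well-defined finite-state chain fill in details the paper's short proof leaves implicit.
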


\begin{proof}
By \eqref{eq:ph_ranking_index}, the action-dependent part of the HJB objective is
\begin{equation}
\label{eq:ph_action_objective}
\sum_{j:x_j>0}u_j\left[e(t)-R_j(t,\boldsymbol{x})\right].
\end{equation}
Activating a phase-$j$ unit reduces this objective if and only if $R_j(t,\boldsymbol{x})>e(t)$. Since every activated unit consumes one unit of replenishment capacity, the objective is minimized by activating at most $K$ such units in decreasing order of $R_j(t,\boldsymbol{x})$. The pointwise minimizer is optimal by the same HJB verification argument used for Proposition~\ref{pro:optimal_control}.
\end{proof}

Equivalently, the controller ranks every depleted unit according to the marginal value rate of its current phase and assigns the available servers to the units with the largest rates above the operating-cost rate. When $J=1$ and $\nu_{10}=\mu$, the ranking index reduces to $R_1(t,i)=\mu[V(t,i)-V(t,i-1)]=\mu\Delta_i(t)$. After the normalization $\mu=1$, Proposition~\ref{pro:phase_type_ranking} therefore reduces to the bang--bang policy in Proposition~\ref{pro:optimal_control}.

With multiple phases, however, the total optimal activation $\sum_j u_j^*(t,\boldsymbol{x})$ may lie strictly between $0$ and $i\wedge K$, because only some phases may have marginal value rates above $e(t)$. The resulting structure is therefore a phase-ranking policy rather than the one-dimensional bang--bang policy. The numerical results in \S\ref{sec:numerical_phase_type} examine whether this composition-dependent policy nevertheless exhibits a threshold-like structure after aggregation by the total depleted state $i$.

\section{Numerical Experiments}
\label{sec:numerical}


To validate the theoretical results developed in the previous sections, and in particular to illustrate the robustness of the threshold structure in practical implementations, in this section we report several numerical experiments using a representative set of parameters calibrated from data on EV battery-swapping systems detailed in \S\ref{sec:numerical_setting}.
This is followed by three subsections, where we examine
the optimal charger-activation policy and its performance across different charging-power levels and seasonal electricity-price schedules
in \S\ref{sec:numerical_peak_offpeak},  
and under non-exponential replenishment times
in \S\ref{sec:numerical_phase_type}, 
followed by a brief summary of the key insights garnered from the experiments in \S \ref{sec:numerical_insights}.
 
\subsection{Data and Problem Setup}
\label{sec:numerical_setting}

The demand profile is calibrated using the battery-swapping data reported in \cite{ai,qi}. The hourly EV arrival rate is set to $\lambda_{\rm L}=5.68$ from 0:00 to 8:59 and $\lambda_{\rm H}=14.51$ from 9:00 to 23:59. The electricity-price profile follows Rate Option 1 of Rocky Mountain Power's Electric Vehicle Time-of-Use Pilot (Schedule 2E), as reported by Meredith (2017) \cite{meredith} and adopted in \cite{he}. Under the summer schedule, the on-peak period is 15:00--20:00 for (non-holiday) weekdays; and electricity prices are $c_{\rm H}^{\rm e}=\$0.223/\mathrm{kWh}$ and $c_{\rm L}^{\rm e}=\$0.068/\mathrm{kWh}$ for on-peak and off-peak hours, respectively.

Following \cite{ai,qi}, the baseline-power setting uses a charging power of $P=41$ kW and a mean replenishment time of $1/\mu=0.78$ hours. These values give $\mu\approx1.282$ per hour and operating costs of $e_{\rm H}=P c_{\rm H}^{\rm e}=\$9.143$ and $e_{\rm L}=\$2.788$ per active charger-hour. We set $N=23$ based on the battery capacity of a fourth-generation NIO swapping station \cite{nio}, and set $K=20$ to represent its effective parallel replenishment capacity. The lost-demand penalty is $\pi=\$3.01$ per unserved request, based on the operating data in \cite{liang}.

To examine the optimal control under different operating conditions, we vary the charging power and seasonal electricity-price schedule. Following \cite{qi}, the lower-power setting uses $P=7$ kW and $1/\mu=4.7$ hours, representing slower replenishment under limited grid capacity. These values give $\mu\approx0.213$ per hour, $e_{\rm H}=\$1.561$, and $e_{\rm L}=\$0.476$ per active charger-hour. For the electricity-price profile, we consider both the summer schedule described above and the winter schedule, which adds another on-peak period from 8:00--10:00 \cite{meredith}. Combining the two charging-power levels and two seasonal schedules yields four parameter settings. All other parameters remain at their baseline values. Unless otherwise stated, the experiments use a five-day horizon with the daily demand and electricity-price profiles repeated every 24 hours. 


We solve the finite-state HJB system backward from the terminal condition $V(T,i)=0$, $i=0,\ldots,N$, on a uniform time grid with step size $\Delta t=0.01$ hours. At each grid point, the optimal action is obtained through the pointwise minimization in the HJB equation. The resulting value functions are used to compute $\Delta_i(t)=V(t,i)-V(t,i-1)$ and the corresponding activation threshold according to the condition $\mu\Delta_i(t)>e(t)$.

\subsection{Optimal Charger-Activation Control}
\label{sec:numerical_peak_offpeak}

Implementing the optimal control, which is a time- and state-dependent feedback rule, requires the station to monitor the inventory level of depleted batteries and adjust charger-activation decisions accordingly. Here our focus is on the peak--off-peak rule characterized in Proposition \ref{prop:peak_offpeak}, which has the distinct advantage of simplicity and ease of implementation, and we compare its performance with the fully dynamic optimal policy. 


In Figure \ref{fig:fixed_resource_control}, we illustrate that the simple peak--off-peak rule coincides with the optimal policy over most of the planning horizon. 
Specifically, Figure \ref{fig:fixed_resource_marginals} compares the marginal continuation costs $\Delta_i(t)$ with the expected replenishment cost $e(t)/\mu$.  
Figure \ref{fig:optimal_primitive_control} compares the optimal policy with the actions specified in \eqref{eq:peak_offpeak_condition} of Proposition \ref{prop:peak_offpeak}: during peak-price periods, the optimal threshold is $i_t^{\min}=N+1=24$, implying complete charger shutdown; during (non-terminal) off-peak periods, the threshold becomes $i_t^{\min}=1$, implying full-activation whenever depleted batteries are present. The given data correctly certify the optimality of ``peak-period shutdown and off-peak full-activation'', with the only exception being the final off-peak interval as the planning horizon ends.
(Also note that close to the terminal time, the optimal threshold rises, consistent with Corollary~\ref{cor:increasing_operating_cost}).

\begin{figure}[htbp]
\centering
\begin{subfigure}[t]{0.47\textwidth}
\centering
\includegraphics[width=\textwidth]{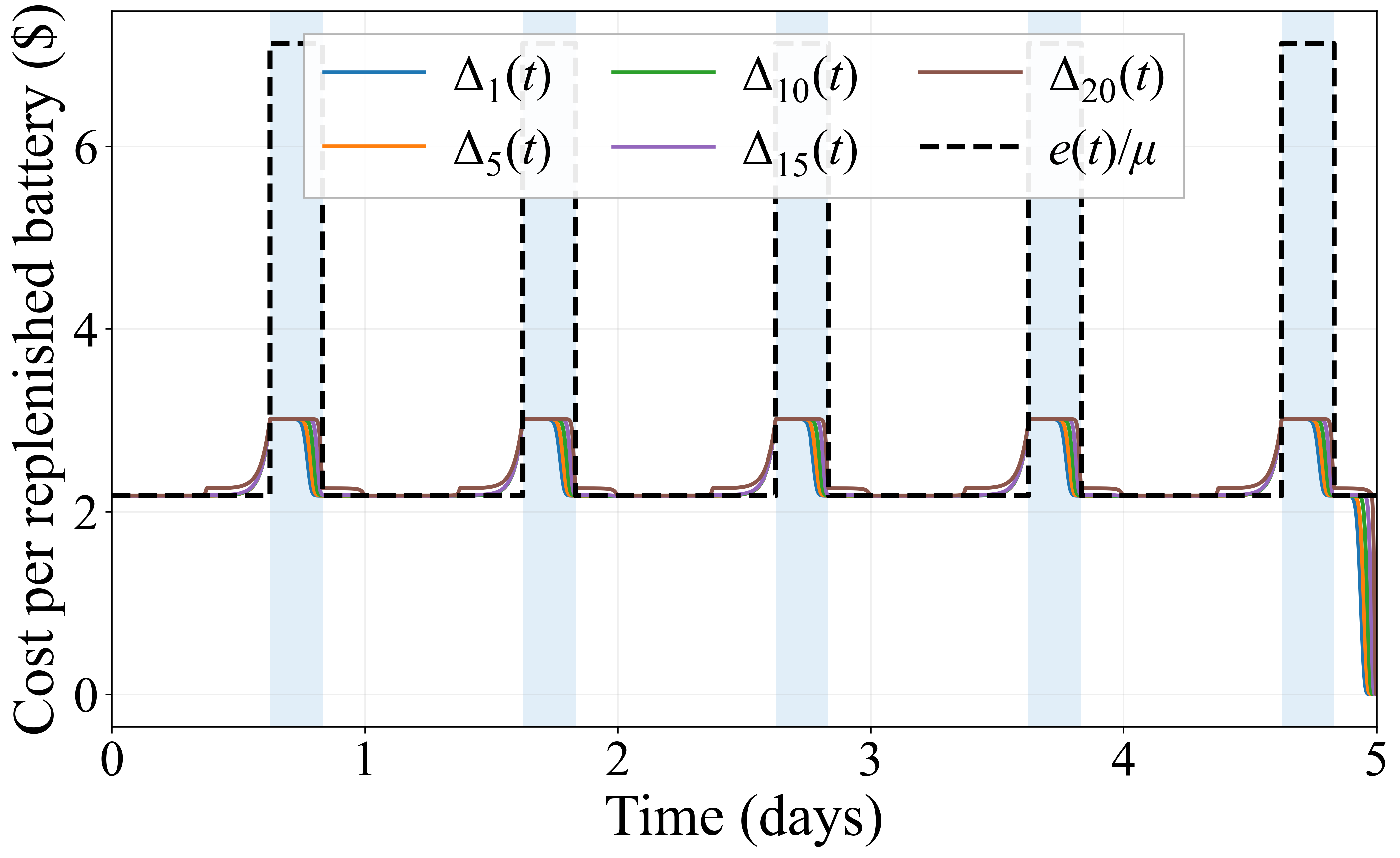}
\caption{Marginal continuation costs and $e(t)/\mu$}
\label{fig:fixed_resource_marginals}
\end{subfigure}
\hfill
\begin{subfigure}[t]{0.47\textwidth}
\centering
\includegraphics[width=\textwidth]{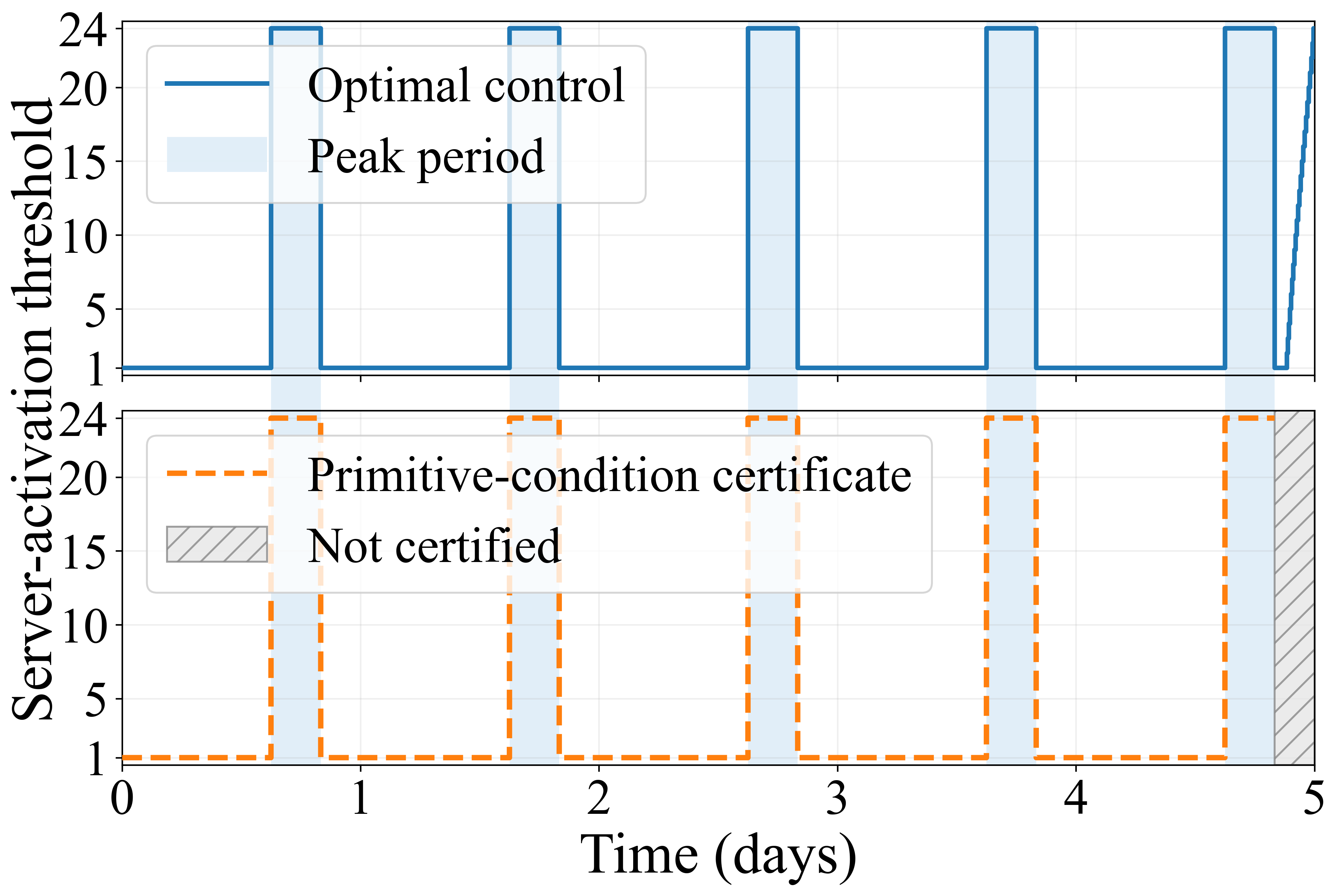}
\caption{Optimal control and primitive certificates}
\label{fig:optimal_primitive_control}
\end{subfigure}
\vspace{1em}
\caption{Optimal peak--off-peak charger activation under the baseline setting.\vspace{0.5em}\protect\\[4pt]
{\footnotesize\textbf{Note:} A threshold of $1$ indicates full feasible activation, whereas $N+1=24$ indicates complete shutdown. The gray-hatched interval in panel (b) is not covered by the primitive sufficient conditions.}}
\label{fig:fixed_resource_control}
\end{figure}


Next, we examine whether the peak--off-peak structure is sensitive to charging power and to electricity-price schedules. In Figure \ref{fig:fixed_resource_control_lp} 
the charging power is reduced from $41$ kW to $7$ kW, which 
changes both the level and the temporal evolution of the marginal continuation costs. Nevertheless, the optimal policy retains the same peak--off-peak structure, and the optimality of the latter continues to be certified by the given data.

\begin{figure}[htbp]
    \centering
    \begin{subfigure}[t]{0.47\textwidth}
        \centering
        \includegraphics[width=\textwidth]
        {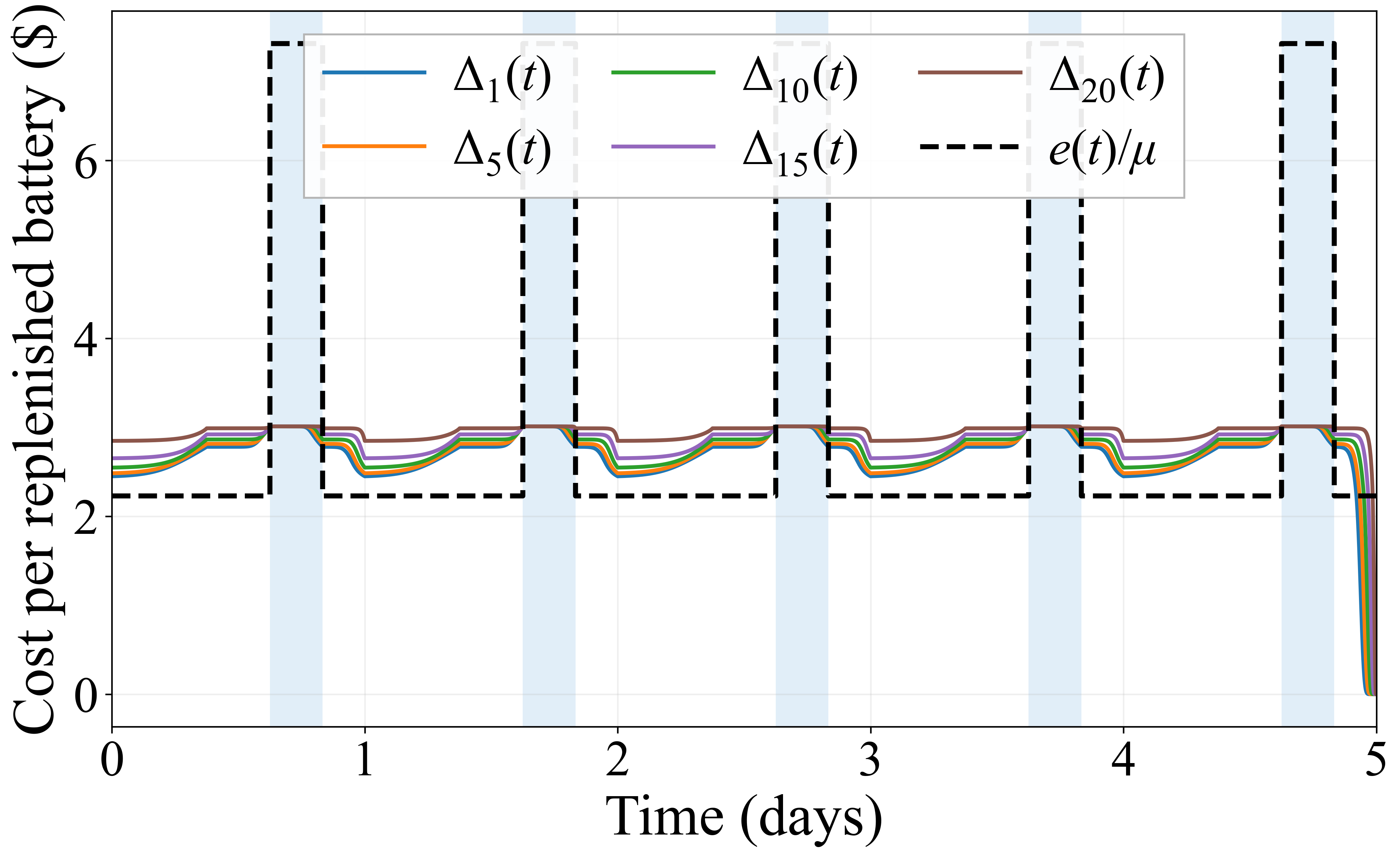}
        \caption{Marginal continuation costs and $e(t)/\mu$}
        \label{fig:fixed_resource_rarginals_lp}
    \end{subfigure}
    \hfill
    \begin{subfigure}[t]{0.47\textwidth}
        \centering
        \includegraphics[width=\textwidth]
        {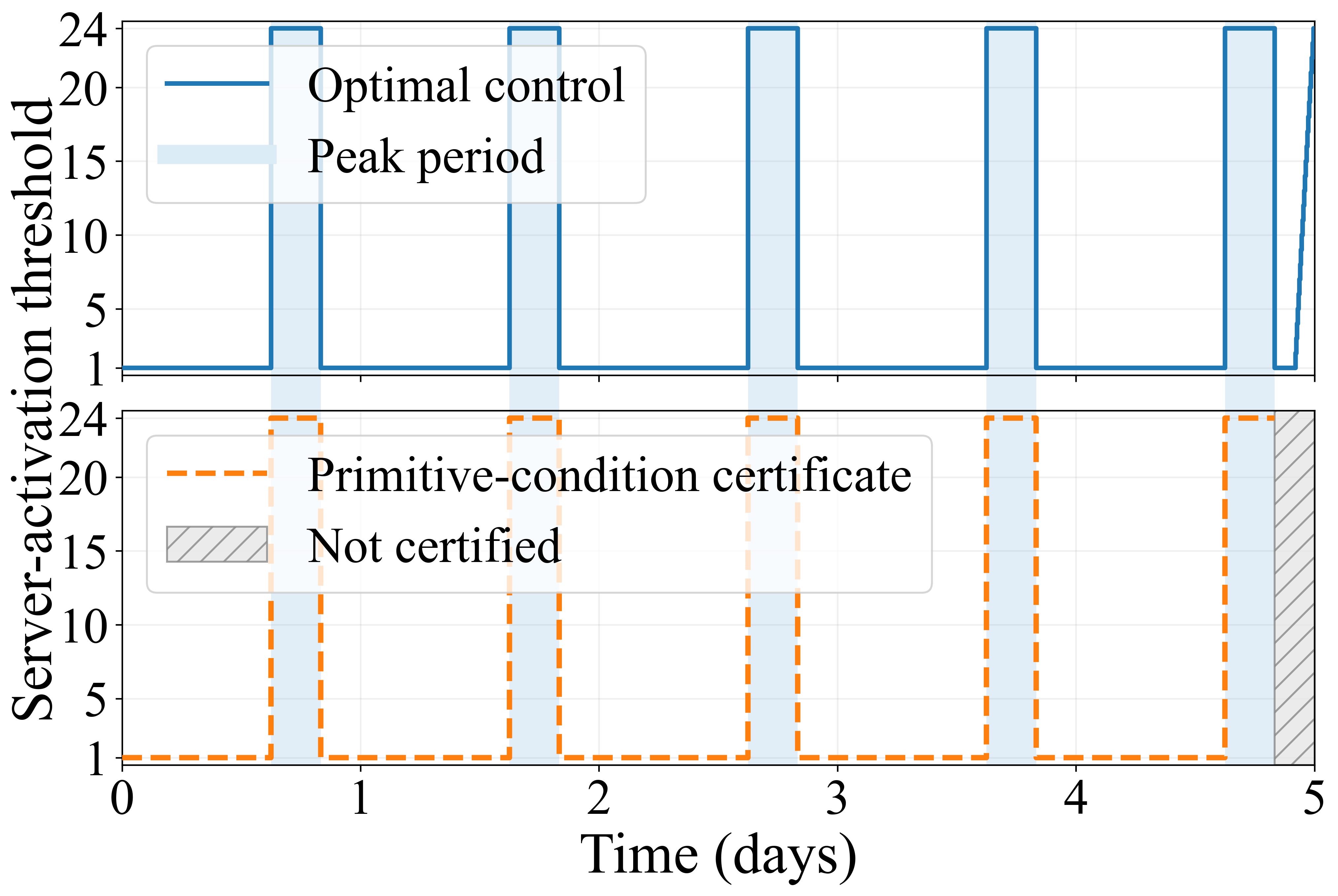}
        \caption{Fully dynamic vs.\ primitive-based control}
        \label{fig:optimal_primitive_control_lp}
    \end{subfigure}
    \vspace{1em}
    \caption{Peak--off-peak charger-activation control with lower charging power and the summer electricity-price schedule.}
    \label{fig:fixed_resource_control_lp}
\end{figure}

Figures \ref{fig:fixed_resource_control_wt} and \ref{fig:fixed_resource_control_wt_lp} consider the winter schedule under the baseline and lower charging powers, respectively. Under both charging-power settings, the optimal policy shuts down all chargers during the 8:00--10:00 morning peak, while the primitive conditions provide only partial certification of the surrounding off-peak full-activation pattern. The partial certification reflects the conservative nature of the primitive sufficient conditions.

\begin{figure}[htbp]
    \centering
    \begin{subfigure}[t]{0.47\textwidth}
        \centering
        \includegraphics[width=\textwidth]
        {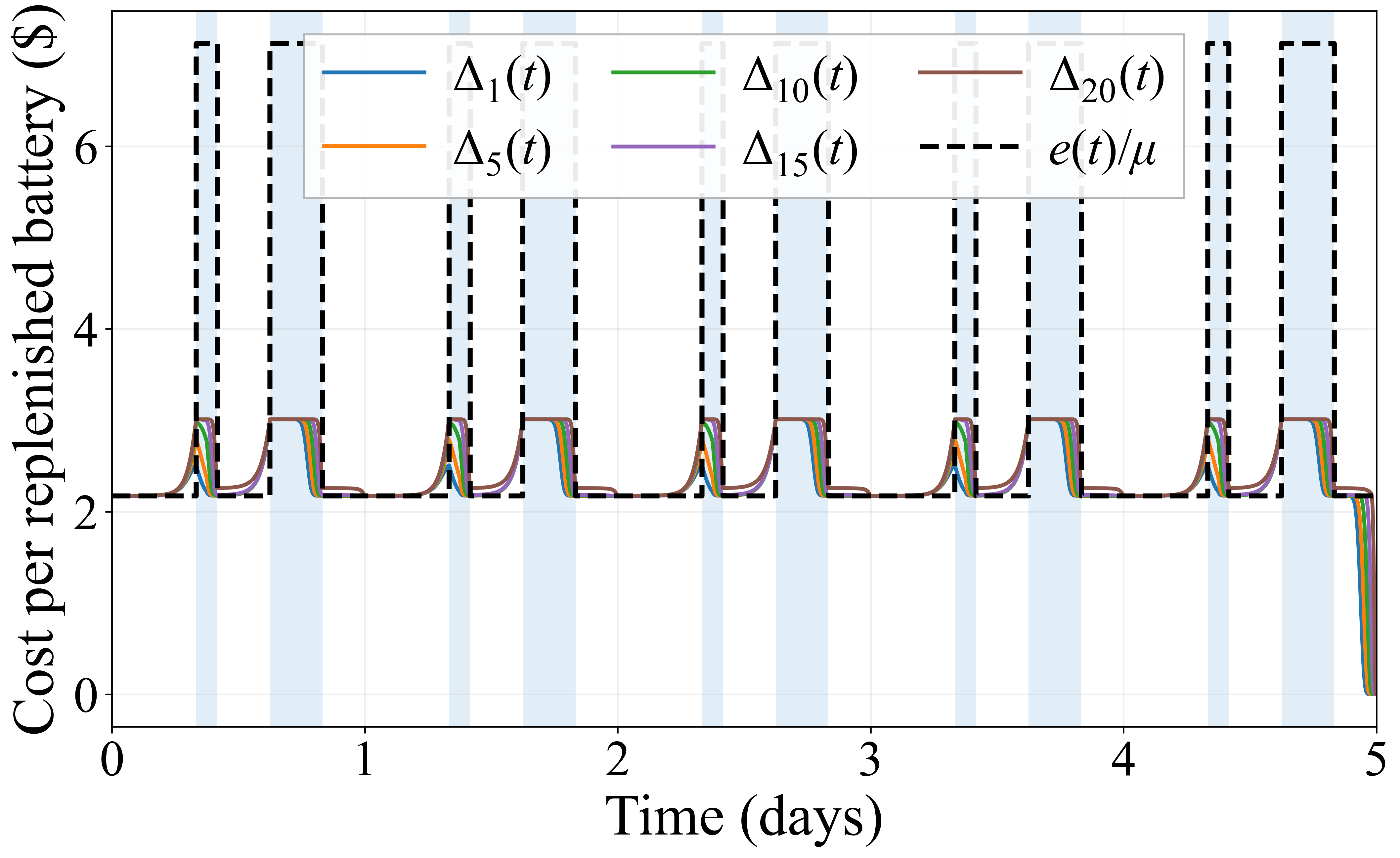}
        \caption{Marginal continuation costs and $e(t)/\mu$}
        \label{fig:fixed_resource_rarginals_wt}
    \end{subfigure}
    \hfill
    \begin{subfigure}[t]{0.47\textwidth}
        \centering
        \includegraphics[width=\textwidth]
        {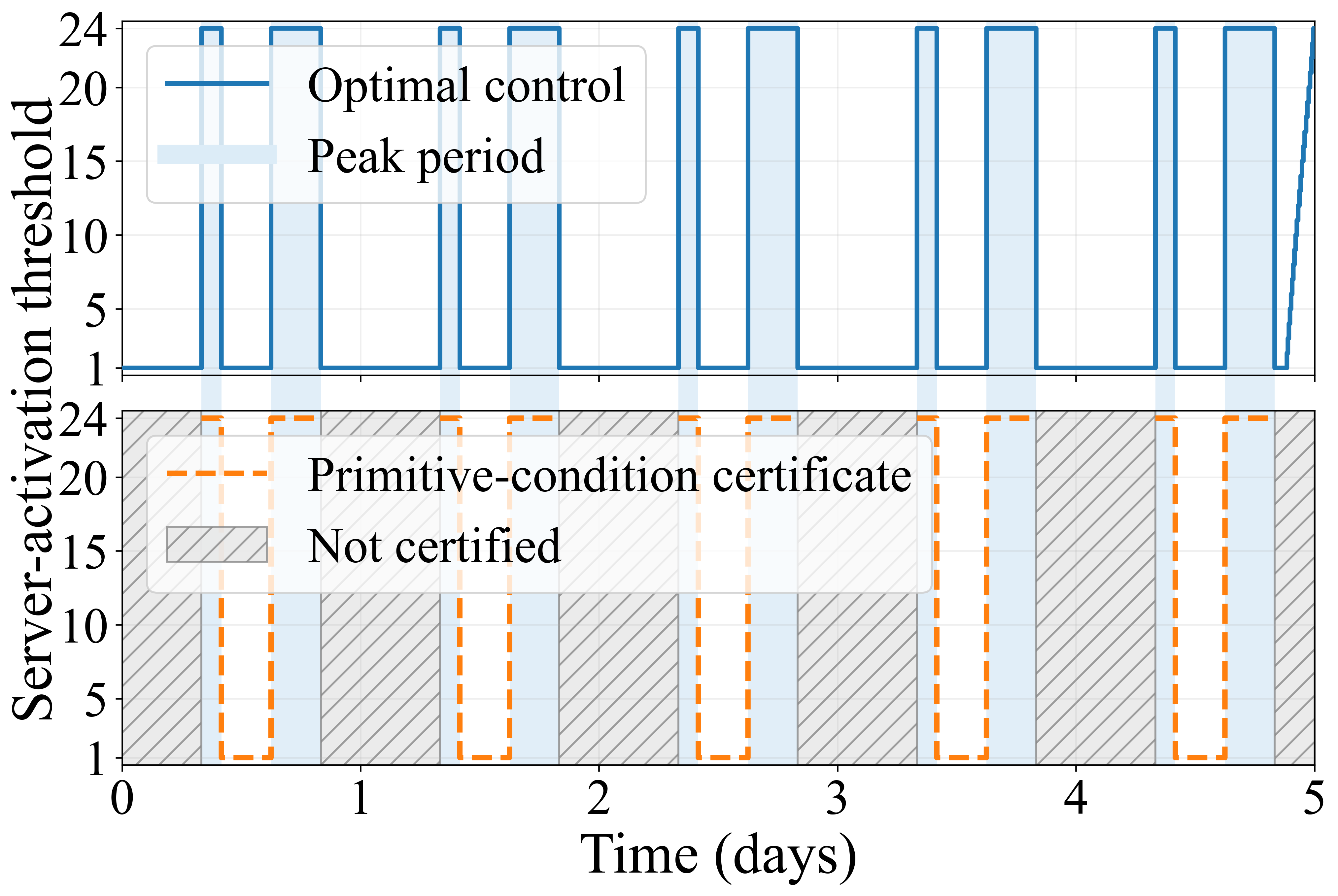}
        \caption{Fully dynamic vs.\ primitive-based control}
        \label{fig:optimal_primitive_control_wt}
    \end{subfigure}
    \vspace{1em}
    \caption{Peak--off-peak charger-activation control with the winter electricity-price schedule and the baseline charging power.}
    \label{fig:fixed_resource_control_wt}
\end{figure}

\begin{figure}[!ht]
    \centering
    \begin{subfigure}[t]{0.47\textwidth}
        \centering
        \includegraphics[width=\textwidth]
        {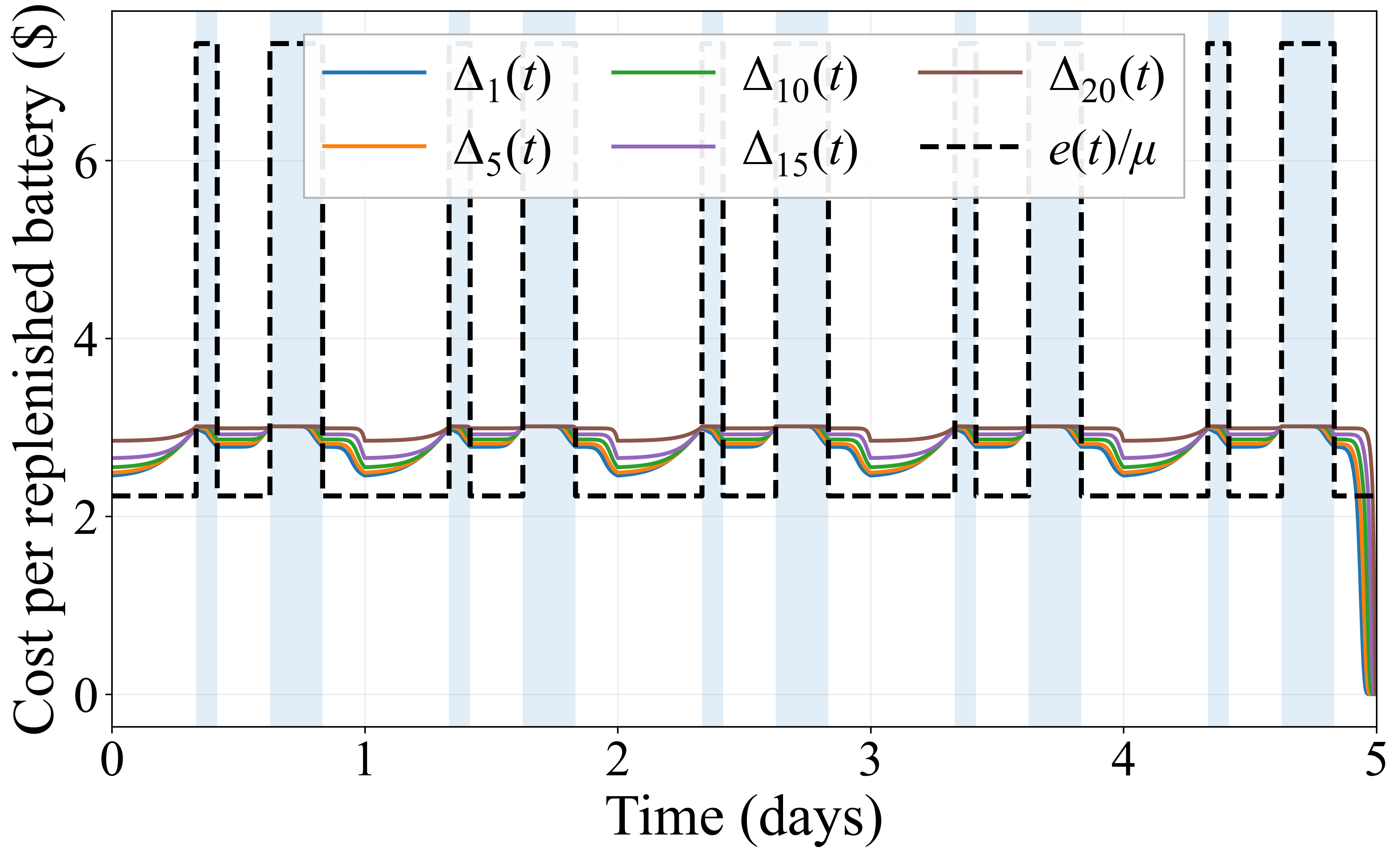}
        \caption{Marginal continuation costs and $e(t)/\mu$}
        \label{fig:fixed_resource_rarginals_wt_lp}
    \end{subfigure}
    \hfill
    \begin{subfigure}[t]{0.47\textwidth}
        \centering
        \includegraphics[width=\textwidth]
        {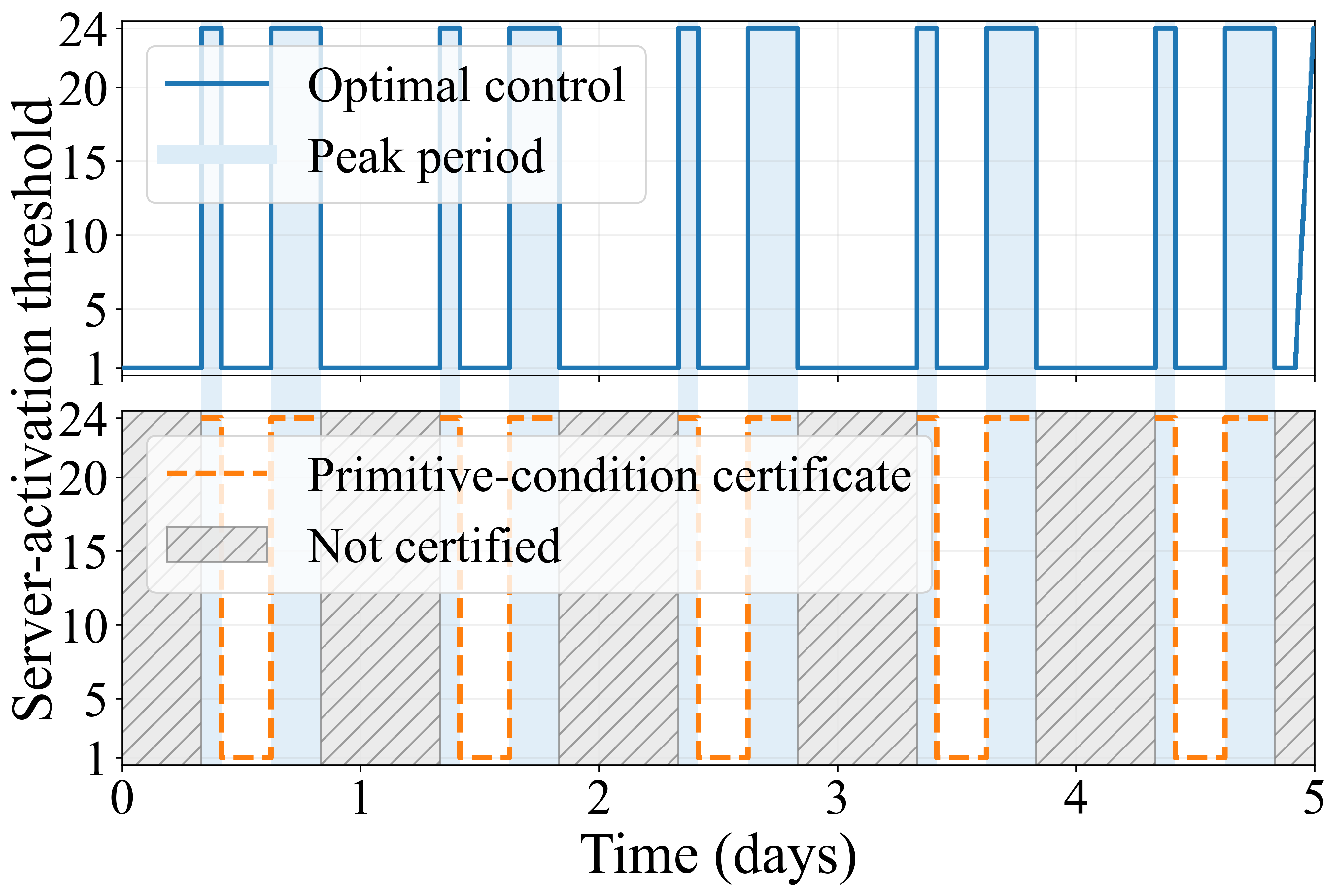}
        \caption{Fully dynamic vs.\ primitive-based control}
        \label{fig:optimal_primitive_control_wt_lp}
    \end{subfigure}
    \vspace{1em}
    \caption{Peak--off-peak charger-activation control under fixed resource capacities with the winter electricity-price schedule and lower charging power.}
    \label{fig:fixed_resource_control_wt_lp}
\end{figure}


\subsection{Non-Exponential Replenishment Times}
\label{sec:numerical_phase_type}

In practice, replenishment/charging times need not follow an exponential distribution. Their variability can be summarized by the squared coefficient of variation (SCV), with the exponential distribution corresponding to SCV $=1$, whereas less variable distributions correspond to SCV $<1$, and more variable distributions to SCV $>1$. Consequently, the optimal control and system performance may depend on the charging-time distribution, as will be demonstrated in the following three subsections. First, we numerically examine whether the optimal control of phase-type models discussed in \S\ref{sec:extension} will exhibit a threshold-like pattern. Second, within the phase-type family, we illustrate how the optimal expected cost varies with the SCV while holding the mean fixed. Third, we study the robustness of the phase-type model serving as a proxy, or {\it approximation}, for general non-exponential charging times. 

\subsubsection{Threshold-Like Optimal Control under Phase-Type Charging Times}

Consider two phase-type charging-time distributions while keeping all other parameters at their baseline values under the summer electricity-price schedule. Both distributions have a mean charging-time of $0.78$ hours. The Erlang-3 ($E_3$) distribution consists of three sequential phases, each with rate $3/0.78$ (hence, SCV$=1/3$). In this case, with the notation of \S\ref{sec:extension}, we have $J=3$, $\alpha_1=1$, $\alpha_2=\alpha_3=0$, and $\nu_{12}=\nu_{23}=\nu_{30}=3/0.78$, with all other transition rates equal to zero. The hyperexponential-2 ($H_2$) distribution is an equal-probability mixture of two exponential branches with respective means $0.26$ and $1.30$ hours (hence, SCV $=17/9$); with $J=2$, $\alpha_1=\alpha_2=1/2$, $\nu_{10}=1/0.26$, $\nu_{20}=1/1.30$, and $\nu_{12}=\nu_{21}=0$.

Starting from $\boldsymbol{X}(0)=\boldsymbol{0}$, let $\varrho_n(\boldsymbol{x})$ denote the proportion of phase state $\boldsymbol{x}$ at time $t_n$ under the optimal policy. For each aggregate depleted state $i$, we classify $(t_n,i)$ as an activation pair if some phase composition with positive proportion induces positive activation. The corresponding projected threshold is
\begin{equation}
\label{eq:ph_projected_threshold}
\begin{aligned}
&u_n^{\mathrm{agg}}(i):=\sum_{\boldsymbol{x}:\,\sum_{j=1}^{J}x_j=i}
\varrho_n(\boldsymbol{x})\sum_{j=1}^{J}u_{n,j}^{*}(\boldsymbol{x}),\\
&i_n^{\mathrm{proj}}:=\min\left\{i\in\{1,\ldots,N\}: u_n^{\mathrm{agg}}(i)>0\right\},
\end{aligned}
\end{equation}
with $i_n^{\mathrm{proj}}=N+1$ when the set is empty.

Figure~\ref{fig:phase_type_aggregated_threshold} reports the resulting activation regions. Under both 
distributions, 
the aggregated positive-activation regions exhibit a clear threshold-like pattern in the number of depleted batteries
(even though the exact optimal action depends on the phase composition). 

\begin{figure}[!ht]
\centering
\includegraphics[width=0.5\textwidth]{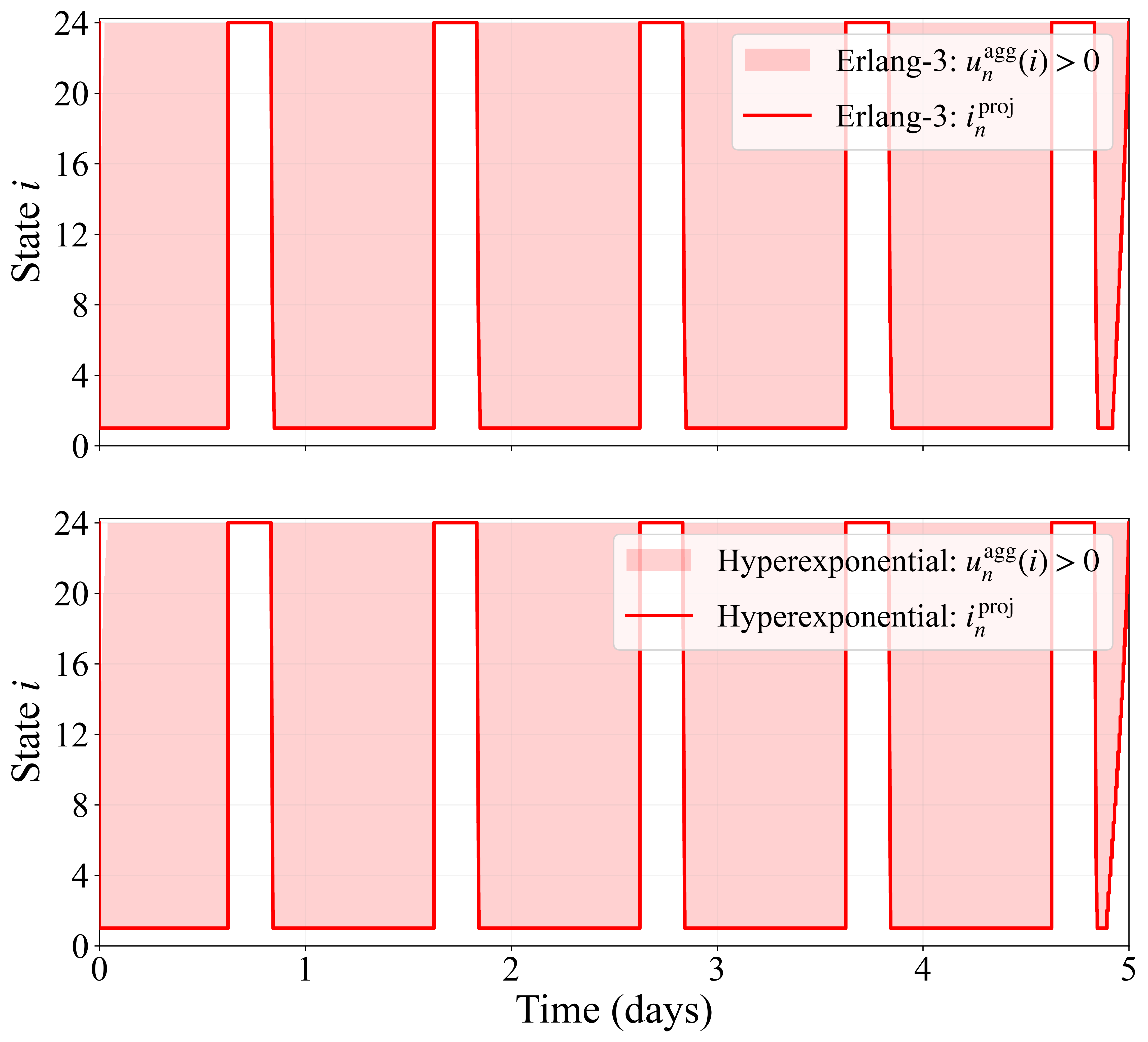}
\vspace{1em}
\caption{Threshold-like aggregated activation under phase-type replenishment times. \vspace{0.5em}\protect\\[4pt]
{\footnotesize\textbf{Note:} The shaded cells indicate aggregate state--time pairs for which at least one phase composition has positive optimal activation, and the red curve denotes their lower boundary $i_n^{\mathrm{proj}}$. A projected threshold of $N+1=24$ indicates no positive activation.}}
\label{fig:phase_type_aggregated_threshold}
\end{figure}

\subsubsection{Impact of Replenishment-Time Variability}

We examine how charging-time variability affects system performance within the phase-type family. All charging-time distributions have the same mean of $m=0.78$ hours. To cover SCVs below one, we consider Erlang-$k$ distributions with $k\in\{2,3,5\}$, phase rate $k/m$, and SCV $1/k$. We use the exponential distribution with rate $1/m$ as the benchmark with SCV one. For SCV $c>1$, we consider moment-matched two-branch hyperexponential distributions ($H_2$). Let $r=\sqrt{(c-1)/(c+1)}$, with branch probabilities $p_f=(1+r)/2$ and $p_s=(1-r)/2$, and corresponding rates $\mu_f=2p_f/m$ and $\mu_s=2p_s/m$. This construction has mean $m$ and SCV $c$ exactly. We consider $c\in\{2,3,5,10\}$. For each distribution, we solve the corresponding phase-composition continuous-time MDP.

Figure~\ref{fig:phase_type_scv_cost} reports the optimal total expected cost and its decomposition into charging and lost-demand costs under two parameter settings. The left panel, Figure~\ref{fig:phase_type_scv_cost_baseline}, reports the baseline results, i.e., for the parameter setting considered above; whereas in the right panel, Figure~\ref{fig:phase_type_scv_cost_high_penalty}, we double the arrival rate and increase the lost-demand penalty from $3.01$ to $6$, while keeping $N=23$ and $K=20$ (same as in the baseline).

\begin{figure}[!ht]
    \centering
    \begin{subfigure}[t]{0.48\textwidth}
        \centering
        \includegraphics[width=\textwidth]{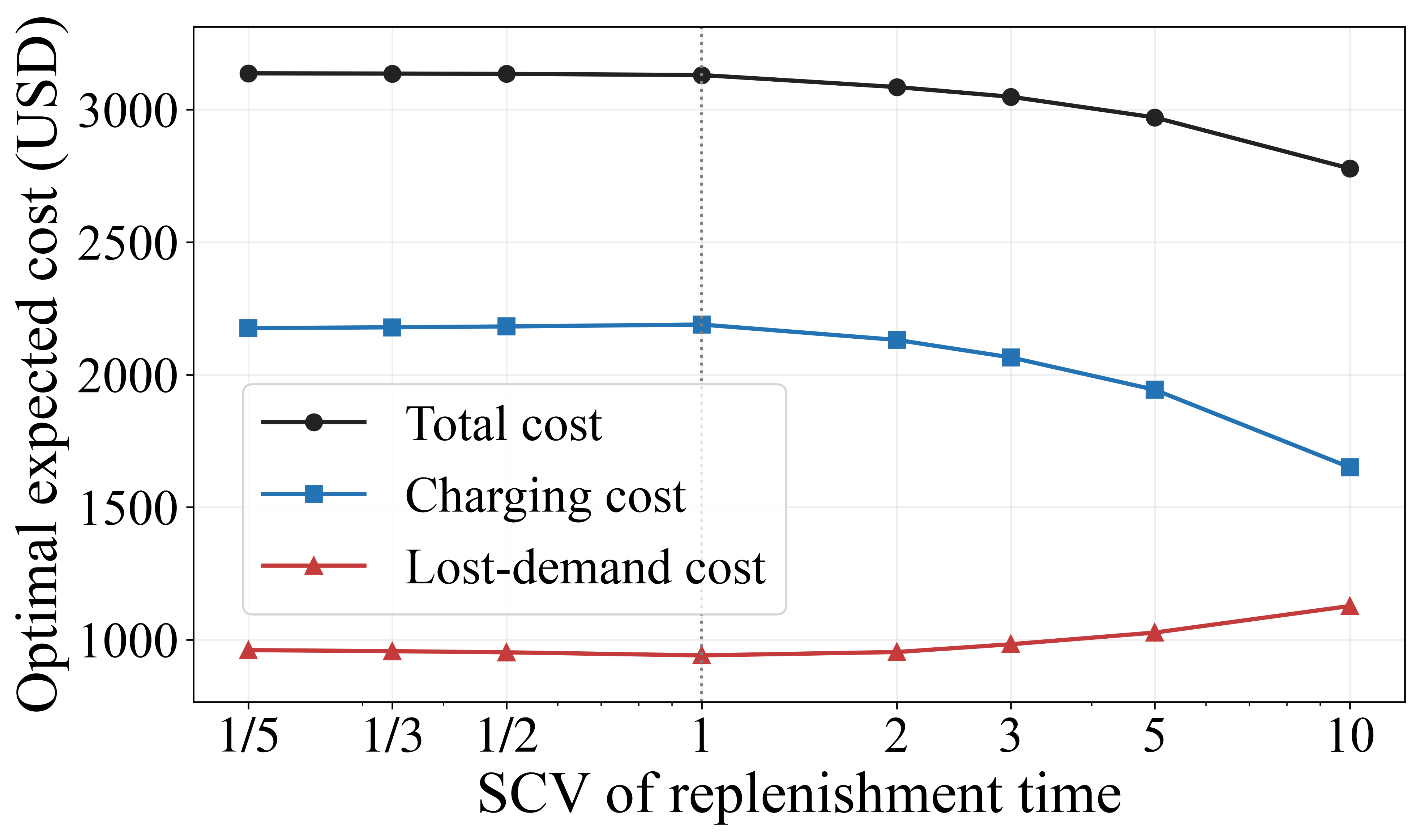}
        \caption{Baseline setting.}
        \label{fig:phase_type_scv_cost_baseline}
    \end{subfigure}
    \hfill
    \begin{subfigure}[t]{0.48\textwidth}
        \centering
        \includegraphics[width=\textwidth]{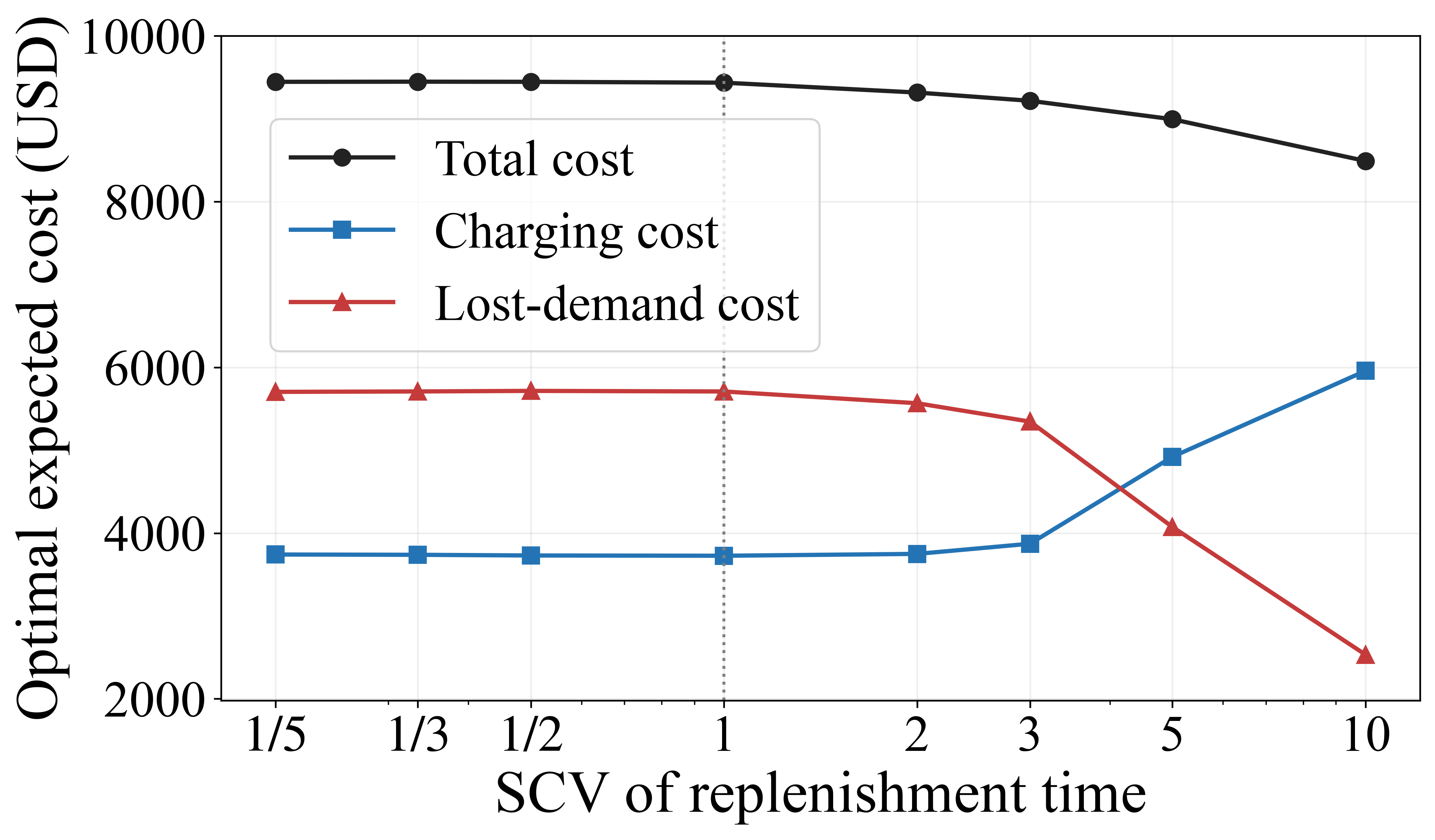}
        \caption{Higher-demand and higher-penalty setting.}
        \label{fig:phase_type_scv_cost_high_penalty}
    \end{subfigure}
    \caption{Optimal expected cost under different phase-type replenishment times. \vspace{0.5em}\protect\\[4pt]
    {\footnotesize\textbf{Note:} The SCV axis is plotted on a logarithmic scale in both panels. (a) uses the baseline parameter setting. (b) doubles both arrival rates and the lost-demand penalty.}}
    \label{fig:phase_type_scv_cost}
\end{figure}

The results in both panels show that for SCV $<1$, deviation of the optimal total cost from 
the exponential benchmark is almost negligible. In other words, the continuous-time MDP control appears to be quite robust for non-exponential charging times with SCV $<1$.

As the SCV increases above $1$, deviations from the exponential benchmark become more pronounced:
 in the left panel, the charging cost decreases whereas the penalty cost increases; by contrast, 
in the right panel, the charging cost increases whereas the penalty cost decreases.
The underlying reason is as follows: 


\begin{itemize}
\item 
In the left panel (baseline case), with the relatively low demand/arrival rate and low penalty cost (for lost demand), the optimal control weighs more heavily on saving charging cost, avoiding batteries with longer charging times, leaving them uncharged even during off-peak hours. This explains the increase in lost demand and penalty cost as the SCV increases.

\item 
In the right panel, with both demand rate and penalty cost doubled, the optimal control shifts its focus so as to avoid losing demand and incurring penalty cost, by charging more aggressively, including charging batteries with shorter charging times even during peak hours. This leads to the decrease in penalty cost, at the price of increased charging cost.

\end{itemize}
In both cases, the decrease in one cost component outweighs the increase in the other, so the overall net effect is the total cost decreasing in the SCV of charging times. 

\subsubsection{Robustness of the Phase-Type Approximation}

We next examine the performance of phase-type approximations when the true charging-time distribution is {\it not} phase type. 
Here we focus on cases with SCV $>1$, where the queueing approximations tend to be more sensitive to service-time variability \cite{johnson1993,johnson1994} (which is similar to what we have observed in the last subsection, although for a different reason --- phase-type as approximation for a general distribution). For computational tractability, we consider a smaller system with $N=4$ batteries and $K=3$ chargers and use a bimodal distribution as the true replenishment-time distribution. To preserve the baseline demand load per battery, both arrival rates are scaled by $4/23$. For each target SCV $c\in\{2,4,6,8,10\}$, the replenishment time has mean $0.78$ hours and is distributed as
\begin{equation}
\label{eq:bimodal_scv_construction}
\mathbb{P}(S=0.39)=\frac{4c}{4c+1},
\qquad
\mathbb{P}\bigl(S=(4c+2)\times 0.39\bigr)=\frac{1}{4c+1}.
\end{equation}
This construction gives $\mathbb{E}[S]=0.78$ hours with SCV $=c$.

We compare two approximations of the bimodal distribution: a) an exponential approximation that matches only the mean $\mathbb{E}[S]=0.78$; and b) an $H_2$ approximation with the same mixture probabilities as the bimodal in (\ref{eq:bimodal_scv_construction}), while replacing the point mass of each branch of the bimodal by an Erlang-2 with the same mean. 
Thus, the $H_2$ approximation matches the bimodal in both the mixture probabilities and the length of the two branches (the latter in mean values, equal to $0.39$ and $(4c+2)\times0.39$ hours, respectively). 
Note that this $H_2$ approximation matches the mean of the bimodal but not the variance; in particular, the $H_2$ has an SCV $=(3c+1)/2$.

We solve the optimal control problem with the charging times following the three distributions --- bimodal, exponential and $H_2$ as specified above --- all using the same decision grid with interval $\Delta=0.195$ hours. 
Figure~\ref{fig:bimodal_approx_optimal_cost} compares the three optimized objective values, and shows that the 
$H_2$ model follows the bimodal quite closely; and both optimized objective values decrease as the SCV increases; in contrast, the exponential model stays constant, as expected, unable to capture the variability of charging times.     

Next, in Figure~\ref{fig:bimodal_approx_policy_cost}, we treat both exponential and $H_2$ as {\it approximations} to the bimodal distribution, by applying their optimal control policies to the bimodal charging times. While the bimodal curve remains the same as in Figure~\ref{fig:bimodal_approx_optimal_cost}, the $H_2$ curve shows a slightly widened gap from the bimodal curve, and the exponential curve remains substantially off from bimodal, even though it now decreases in SCV. The performance of both approximations is as expected: $H_2$ is a good approximation for non-exponential charging times, whereas exponential is not a good approximation when the charging-time variability is high.    

\begin{figure}[!ht]
\centering
\begin{subfigure}[t]{0.47\textwidth}
\centering
\includegraphics[width=\textwidth]
{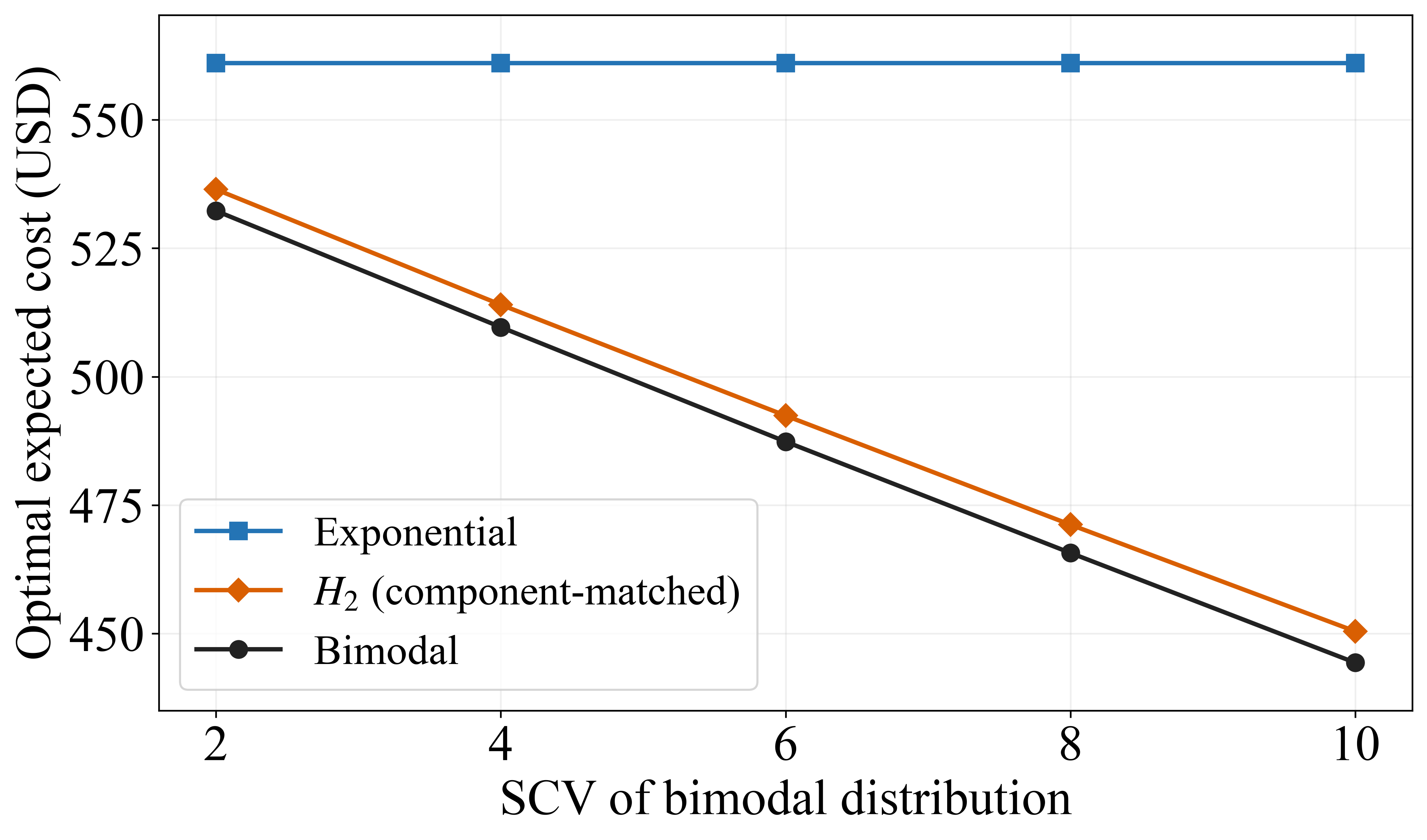}
\caption{Optimal expected costs.}
\label{fig:bimodal_approx_optimal_cost}
\end{subfigure}
\hfill
\begin{subfigure}[t]{0.47\textwidth}
\centering
\includegraphics[width=\textwidth]
{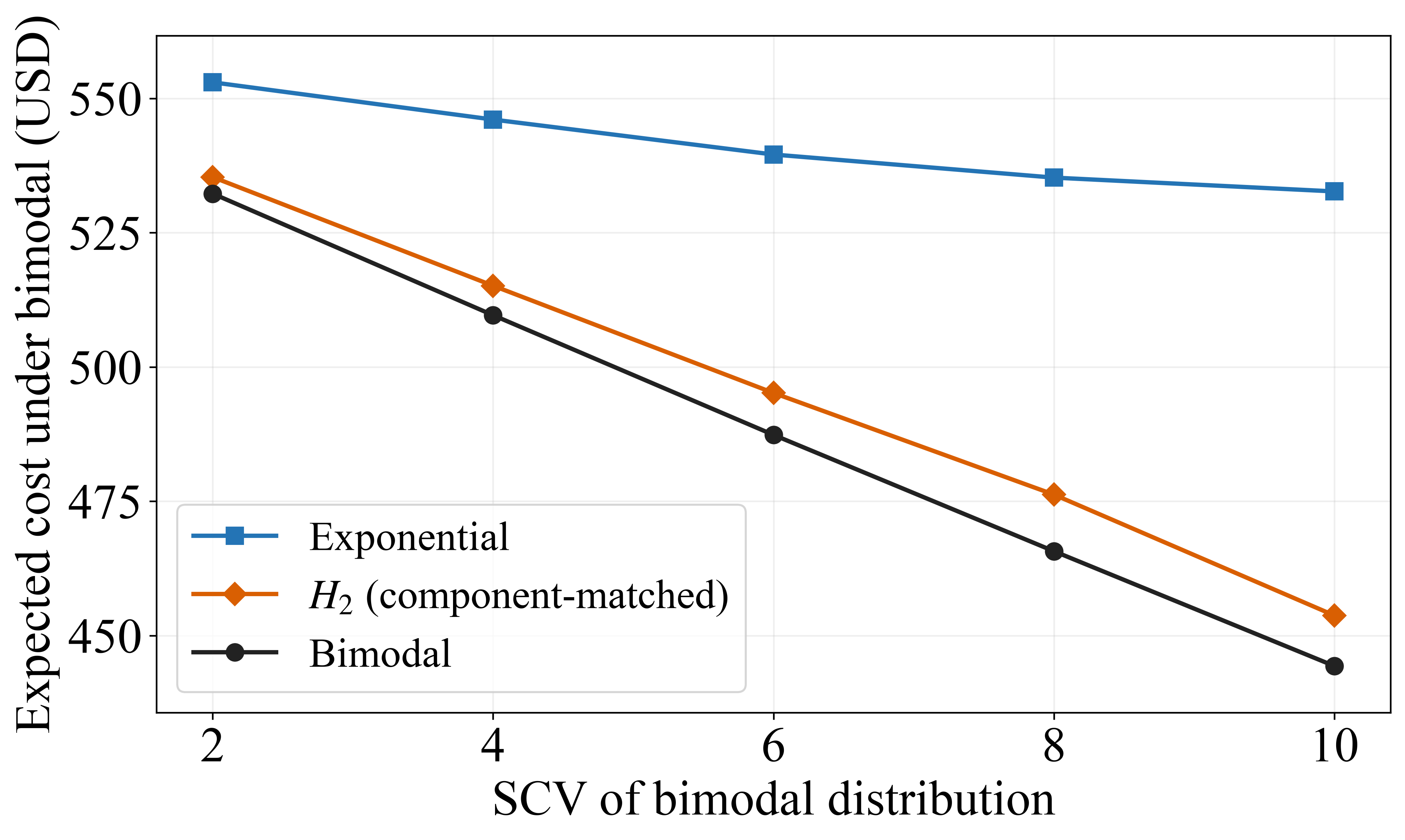}
\caption{Policy costs under the bimodal model.}
\label{fig:bimodal_approx_policy_cost}
\end{subfigure}
\vspace{1em}
\caption{Comparison of the approximations and bimodal models over varying SCVs of the true bimodal distribution.}
\end{figure}

In terms of computational effort, the solution times for both the exponential and $H_2$ approximations remain nearly unchanged across the SCV values considered, 
as shown in Figure~\ref{fig:bimodal_approx_computational_time}.
In contrast, solving the bimodal model exactly becomes increasingly expensive as the SCV increases
(since we need to track the residual charging times in order to derive the optimal policy).

\begin{figure}[!ht]
\centering
\includegraphics[width=0.58\textwidth]
{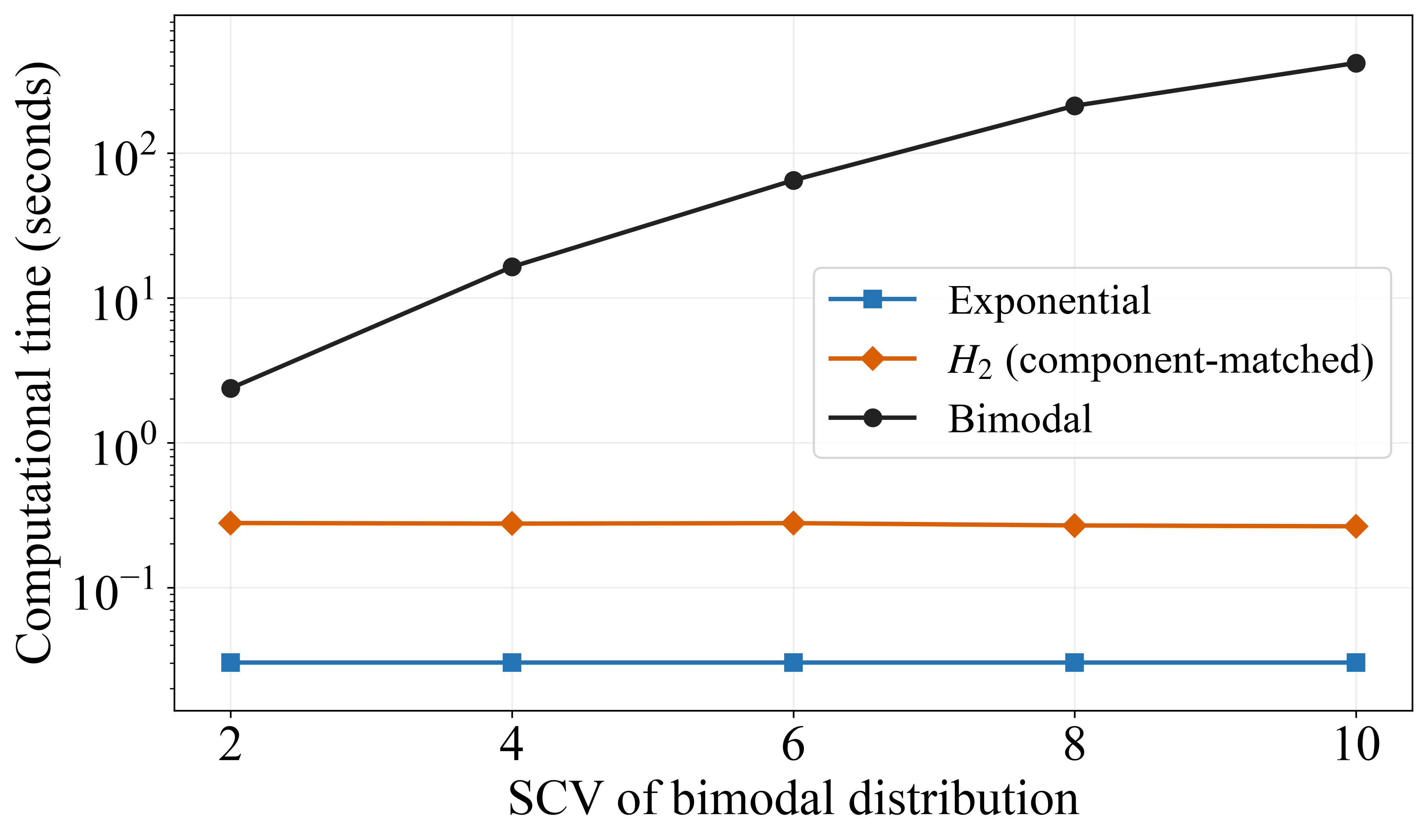}
\caption{Computational times of the approximation and exact bimodal models.}
\label{fig:bimodal_approx_computational_time}
\end{figure}

\subsection{Insights}
\label{sec:numerical_insights}

The experiments provide three main insights. First, under real-world parametric settings, the optimal policy exhibits a clear peak-period shutdown and off-peak full activation structure across different charging-power levels and seasonal electricity-price schedules. The sufficient conditions in Proposition~\ref{prop:peak_offpeak} recover most of this pattern, which supports the robustness of these conditions and illustrates the versatility of the non-stationary Poisson process; in particular, the time-dependent arrival rate (with jumps) appears to compensate well for the limitations of Poisson.

Second, under non-exponential replenishment/charging times, the optimal phase-dependent policy continues to exhibit a clear threshold-like pattern; and more 
importantly, it effectively quantifies the impact of the variability of replenishment times.
Furthermore, the PH distribution, even with a small number of phases (e.g., $H_2$), can serve as a highly accurate approximation for general replenishment-time distributions.
 
Third, the results illustrate how electricity-pricing mechanisms affect the structure of optimal charging policies. Under the TOU (time-of-use) scheme considered here, high peak-period costs induce charger shutdown, whereas low off-peak costs induce full activation. This temporal load shifting is consistent with industry practice, as well as prior studies, e.g., \cite{widrick}. Furthermore, it also resembles the optimal policies under other pricing schemes, such as the one in Li et al. (2026) \cite{li}, which associates in-house electricity (from solar panels installed at the station) with more aggressive charging and grid electricity with more conservative charging.


\section{Conclusions}
\label{sec:conclusion}

In this study we address one of the primary challenges in stochastic control: obtaining exact structural characterizations of the optimal policy in a non-stationary finite-horizon continuous-time MDP setting; particularly when certain rate functions involve jump discontinuities.  Our contributions include a verification of the optimality of bang--bang control using Dynkin's formula, a proof of its threshold structure using Nagumo's theorem, and the derivation of sufficient conditions for the temporal monotonicity of the threshold. 

Motivated by addressing certain operating environments in EV battery-swapping stations  
where the rate functions exhibit cyclic behavior, we further derive a cross-cycle ordering of the thresholds and, under piecewise-constant parameters, sufficient conditions for peak-period shutdown and off-peak (full) activation. To accommodate non-exponential charging-time distributions, we extend the model to phase-type charging times and characterize the optimal control through a phase-ranking policy. Through extensive numerical experiments, we demonstrate the robustness as well as practical effectiveness of the class of threshold policies identified by our study. 




\begin{thebibliography}{99}

\bibitem{ai}
{\sc Ai, W., Cheng, H., and Qi, W.} (2026). Optimizing urban electric vehicle charging and battery swapping
infrastructure: A location-inventory-grid model. {\it Transportation Research Part E: Logistics
and Transportation Review}, 214:104897.

\bibitem{asmussen}
{\sc Asmussen, S.} (2003). {\it Applied Probability and Queues}, vol.\ 51 of Stochastic Modeling and
Applied Probability series. Springer, New York, 2nd edition.

\bibitem{chenyao}
{\sc Chen, H.\ and Yao, D. D.} (2001). {\it Fundamentals of Queueing Networks: Performance, Asymptotics,
and Optimization}. Springer, New York.

\bibitem{davis}
{\sc Davis, J.L., Massey, W.A., and Whitt, W.} (1995). Sensitivity to the service-time distribution in
the nonstationary Erlang loss model. {\it Management Science}, 41(6):1107--1116.

\bibitem{fu}
{\sc Fu, M.C., Marcus, S.I., and Wang, I.-J.} (2000). Monotone optimal policies for a transient queueing
staffing problem. {\it Operations Research}, 48(2):327--331.

\bibitem{harrison}
{\sc George, J.M. and Harrison, J.M.} (2001). Dynamic control of a queue with adjustable service rate.
{\it Operations Research}, 49(5):720-731.

\bibitem{glassermanyao}
{\sc Glasserman, P. and Yao, D.D.} (1994). {\it Monotone Structure in Discrete-Event Systems}, 
Wiley Inter-Science, Series in Probability and Mathematical Statistics, 1994.  

\bibitem{he}
{\sc He, Y., Liu, Z., and Song, Z.} (2022). Integrated charging infrastructure planning and charging
scheduling for battery electric bus systems. {\it Transportation Research Part D: Transport and
Environment}, 111:103437.

\bibitem{jagerman}
{\sc Jagerman, D.L.} (1975). Nonstationary blocking in telephone traffic. {\it Bell System Technical Journal},
54(3):625--661.

\bibitem{johnson1993}
{\sc Johnson, M. A.} (1993).
An empirical study of queueing approximations based on phase-type distributions.
{\it Communications in Statistics---Stochastic Models}, 9(4), 531--561.

\bibitem{johnson1994}
{\sc Johnson, M. A. and Luhman, J. A.} (1994).
Behaviour of queueing approximations based on sample moments.
{\it Applied Stochastic Models and Data Analysis}, 10(4), 233--246.

\bibitem{li}
{\sc Li, Z., Ouyang, H., Sun, Z., and Yuan, Q.} (2026). When peak usage becomes capacity: Optimal charging of battery swapping stations under demand charges. Technical Report 7190119, SSRN.

\bibitem{liang}
{\sc Liang, Y. and Zhang, X.} (2018). Battery swap pricing and charging strategy for electric taxis in China. {\it Energy}, 147:561--577.

\bibitem{massey}
{\sc Massey, W. A. and Whitt, W.} (1996). Stationary-process approximations for the nonstationary Erlang loss model. {\it  Operations Research}, 44(6):976--983.

\bibitem{meredith}
{\sc Meredith, R. M.} (2017). Direct Testimony of Robert M. Meredith. Direct Testimony Docket No. 16-035-36, Rocky Mountain Power. Filed before the Public Service Commission of the State of Utah.

\bibitem{miller}
{\sc Miller, B. M.} (2009). Optimization of queuing system via stochastic control. {\it Automatica}, 45(6):1423-1430.

\bibitem{nagumo}
{\sc Nagumo, M.} (1942). \"Uber die Lage der Integralkurven gew\"ohnlicher Differentialgleichungen. {\it Proceedings
of the physico-mathematical society of Japan}, 3rd Series, 24:551--559.

\bibitem{nio}
{\sc NIO} (2024). NIO Power Swap Station 4.0 Now Operational. NIO Newsroom.

\bibitem{qi}
{\sc Qi, W., Zhang, Y., and Zhang, N.} (2023). Scaling up electric-vehicle battery swapping services in
cities: A joint location and repairable-inventory model. {\it Management Science}, 69(11):6855--6875.

\bibitem{song}
{\sc Song, Y.} (2022). Positive invariance condition for continuous dynamical systems based on Nagumo
theorem. arXiv preprint arXiv:2207.05429.

\bibitem{stidham}
{\sc Stidham, S., Jr.\ and Weber, R.R.} (1989). Monotonic and insensitive optimal policies for control
of queues with undiscounted costs. {\it Operations Research}, 37(4):611--625.


\bibitem{widrick}
{\sc Widrick, R.S., Nurre, S.G., and Robbins, M.J.} (2018). Optimal policies for the management of
an electric vehicle battery swap station. {\it Transportation Science}, 52(1):59--79.

\bibitem{yangyaoye}
{\sc Yang, J., Yao, D.D., and Ye, H.Q.} (2020). On the optimality of reflection control, {\it Operations Research}, 68(6): 1668--1677.


\end{thebibliography}

\newpage

\appendix

\section{Proof of Proposition \ref{pro:optimal_control}.}
\label{app:proof_hjb}

\begin{proof}
Let $W$ be the function constructed in \S\ref{sec:bangbang}. We first show that this construction is well defined and yields a unique function $W$ that is continuous on $[0,T]$, absolutely continuous componentwise, and continuously differentiable on each continuity interval $(\tau_k,\tau_{k+1})$.


Fix $k\in\{0,\ldots,m\}$ and suppose that the right-end value $W(\tau_{k+1})$ has been specified. Writing the HJB system as $-\dot{\mathbf W}(t)=F_k(t,\mathbf W(t))$, extend $e$ and $\lambda$ to the endpoints of $[\tau_k,\tau_{k+1}]$ using their one-sided limits from within the interval. This does not change the HJB equation on the open interval and makes $F_k$ continuous in $t$.

For any $\mathbf w,\mathbf z\in\mathbb R^{N+1}$, we use
\begin{equation*}
\left|\min_u a_u-\min_u b_u\right|
\le\max_u|a_u-b_u|.
\end{equation*}
Together with the bound $u\le K$, this gives
\begin{equation*}
\|F_k(t,\mathbf w)-F_k(t,\mathbf z)\|_\infty
\le 
2\bigl(\lambda(t)+K\bigr)\|\mathbf w-\mathbf z\|_\infty.
\end{equation*}
Since $\lambda$ is bounded on $[0,T]$, $F_k$ is uniformly Lipschitz in its second argument. The Picard--Lindel\"of theorem therefore guarantees a unique solution to the terminal-value problem on $[\tau_k,\tau_{k+1}]$. Moreover, the identity $\dot{\mathbf W}(t)=-F_k(t,\mathbf W(t))$ implies that the derivative has continuous one-sided limits at the endpoints.

Starting from $W(T)=0$ and applying this argument successively backward over the finitely many intervals yields a unique intervalwise $C^1$ solution. Because consecutive pieces are joined using the common value $W(\tau_k)$, the resulting function is continuous on $[0,T]$. Each component is also absolutely continuous, since it is continuous and piecewise $C^1$ over a finite partition.

We next prove that this function coincides with the value function and that a pointwise minimizing Markov policy is optimal.

For $i\ge1$, the terms in the HJB objective that depend on $u$ are $u\bigl(e(t)+W_{i-1}(t)-W_i(t)\bigr)$. Since $\mathcal U(i)=\{0,1,\ldots,\ik\}$, a pointwise minimizing action is
\begin{equation}
u^W(t,i)=
\begin{cases}
\ik, & W_i(t)-W_{i-1}(t)>e(t),\\
0, & W_i(t)-W_{i-1}(t)\le e(t).
\end{cases}
\end{equation}
At equality, every feasible action is a minimizer, and we select $0$ by convention. For $i=0$, the only feasible action is $u^W(t,0)=0$.

Let $u$ be any admissible control and let $I(t)=i$. Since $W$ is $C^1$ on every continuity interval and continuous at each $\tau_k$, Dynkin's formula can be applied separately on these intervals. Summing the resulting identities causes all boundary terms at the discontinuity points to cancel, giving
\begin{equation}
\begin{aligned}
\mathbb E\bigl[W_{I(T)}(T)\bigr]
={}W_i(t)&+\mathbb E\Bigl[\int_t^T(\dot W_{I(s)}(s)+\mathcal L_s^{u(s)}W_{I(s)}(s))ds\Bigr].
\end{aligned}
\end{equation}
Here,
\begin{equation*}
\begin{aligned}
\mathcal L_s^uW_j(s)
=\lambda(s)\bigl(W_{j+1}(s)-W_j(s)\bigr)+u\bigl(W_{j-1}(s)-W_j(s)\bigr).
\end{aligned}
\end{equation*}
By the HJB equation, for every feasible action $v\in\mathcal U(j)$,
\begin{equation}
\dot W_j(s)+\mathcal L_s^vW_j(s)+c(s,j,v)\ge0
\end{equation}
at every continuity point. The finitely many discontinuity points do not affect the integral. Since $W_j(T)=0$, it follows that
\begin{equation}
\begin{aligned}
W_i(t)
&\le\mathbb E\left[\int_t^T c\bigl(s,I(s),u(s)\bigr)\,ds\right]\\
&=J^u(t,i).
\end{aligned}
\end{equation}
As $u$ was arbitrary, $W_i(t)\le V(t,i)$.

Under the Markov policy $u^W$, the pointwise minimum in the HJB equation is attained, so
\begin{equation}
\begin{aligned}
\dot W_j(s)
&+\mathcal L_s^{u^W(s,j)}W_j(s)+c\bigl(s,j,u^W(s,j)\bigr)=0.
\end{aligned}
\end{equation}
Applying the same interval-by-interval Dynkin argument gives $W_i(t)=J^{u^W}(t,i)$. Hence,
\begin{equation}
V(t,i)=\inf_u J^u(t,i)
\le J^{u^W}(t,i)=W_i(t).
\end{equation}
Combining the two inequalities yields
\begin{equation}
W_i(t)=V(t,i)=J^{u^W}(t,i),
\qquad (t,i)\in[0,T]\times\mathcal S.
\end{equation}
Thus, $u^W$ is optimal.

Finally, since $W_i(t)=V(t,i)$, we have $W_i(t)-W_{i-1}(t)=\Delta_i(t)$. Therefore, the selected optimal Markov policy is
\begin{equation}
u^*(t,i)=
\begin{cases}
\ik, & \Delta_i(t)>e(t),\\
0, & \Delta_i(t)\le e(t),
\end{cases}
\end{equation}
which proves the asserted bang--bang characterization.
\end{proof}

\section{Proof of Lemma \ref{lem:monotonicity}}
\label{app:DC}

\begin{proof}
Let $u_i:=i\wedge K$, $i=1,\ldots,N$, and define
\begin{equation}
\begin{aligned}
\Psi_i(t)&:=u_i\min\{0,e(t)-\Delta_i(t)\},\qquad i=1,\ldots,N,
\end{aligned}
\end{equation}
with $\Psi_0(t):=0$. Using $\Delta_i(t)=V(t,i)-V(t,i-1)$,
the HJB system can be written as
\begin{equation}
\begin{aligned}
-\frac{d}{dt}V(t,i)
=\pi\lambda(t)\mathbf 1_{\{i=N\}}
+\lambda(t)\Delta_{i+1}(t)+\Psi_i(t),\qquad i=0,\ldots,N,
\end{aligned}
\end{equation}
where $\Delta_{N+1}(t):=0$. Taking first differences gives
\begin{equation}
\label{eq:threshold_marginal_system}
\begin{aligned}
-\dot\Delta_1(t)&=\lambda(t)\bigl(\Delta_2(t)-\Delta_1(t)\bigr)+\Psi_1(t),\\
-\dot\Delta_i(t)&=\lambda(t)\bigl(\Delta_{i+1}(t)-\Delta_i(t)\bigr)+\Psi_i(t)-\Psi_{i-1}(t),\qquad i=2,\ldots,N-1,\\
-\dot\Delta_N(t)&=\lambda(t)\bigl(\pi-\Delta_N(t)\bigr)+\Psi_N(t)-\Psi_{N-1}(t).
\end{aligned}
\end{equation}
The terminal condition implies $\Delta_i(T)=0$, $i=1,\ldots,N$.

We first establish the weak inequalities in (\ref{delta1}). Reverse time by
setting $\tau=T-t$. To simplify notation, we continue to write
$\Delta_i,\Psi_i,e$, and $\lambda$ for the time-reversed functions, and
use a prime to denote differentiation with respect to $\tau$. Then
(\ref{eq:threshold_marginal_system}) becomes
\begin{equation}
\label{eq:threshold_backward_marginal_system}
\begin{aligned}
\Delta_1'&=\lambda(\Delta_2-\Delta_1)+\Psi_1,\\
\Delta_i'&=\lambda(\Delta_{i+1}-\Delta_i)+\Psi_i-\Psi_{i-1},\qquad i=2,\ldots,N-1,\\
\Delta_N'&=\lambda(\pi-\Delta_N)+\Psi_N-\Psi_{N-1},
\end{aligned}
\end{equation}
with $\Delta_i(0)=0$.

Consider the region $\mathcal C:=\left\{(\Delta_1,\ldots,\Delta_N):0\le\Delta_1\le\cdots\le\Delta_N\le\pi\right\}$. We verify that $\mathcal C$ is invariant under the backward-time dynamics. On the boundary face $\Delta_1=0$, the positivity of $e$ gives $\Psi_1=0$, and hence $\Delta_1'=\lambda\Delta_2\ge0$.

For $i=2,\ldots,N$, let $\delta_i:=\Delta_i-\Delta_{i-1}$. At a boundary point where $\delta_i=0$, we have $\Delta_i=\Delta_{i-1}$. Let $f(x):=\min\{0,e-x\}$. Since $f$ is decreasing and nonpositive, for $i\ge3$, we have $f(\Delta_{i-2})\ge f(\Delta_{i-1})=f(\Delta_i)$. Because $\Psi_j=u_j f(\Delta_j)$, it follows that
\begin{equation}
\Psi_i-2\Psi_{i-1}+\Psi_{i-2}\ge (u_i-2u_{i-1}+u_{i-2})f(\Delta_i)\ge0.
\label{eq:threshold_psi_second}
\end{equation}
The last inequality holds because $u_i=i\wedge K$ is discretely concave
and $f(\Delta_i)\le0$. For $i=2$, the same conclusion follows directly
from $u_0=\Psi_0=0$: when $\Delta_2=\Delta_1$,
\begin{equation}
\Psi_2-2\Psi_1+\Psi_0
=(u_2-2u_1)f(\Delta_2)\ge0.
\end{equation}

Subtracting adjacent equations in
(\ref{eq:threshold_backward_marginal_system}) gives
\begin{equation}
\label{eq:threshold_backward_gap_system}
\begin{aligned}
\delta_i'&=\lambda(\delta_{i+1}-\delta_i)+\Psi_i-2\Psi_{i-1}+\Psi_{i-2},\qquad i=2,\ldots,N-1,\\
\delta_N'&=\lambda(\pi-\Delta_N)-\lambda\delta_N+\Psi_N-2\Psi_{N-1}+\Psi_{N-2}.
\end{aligned}
\end{equation}
Therefore, on the boundary face $\delta_i=0$,
\begin{equation}
\label{eq:threshold_gap_boundary}
\delta_i'\ge
\begin{cases}
\lambda\delta_{i+1}\ge0, & i=2,\ldots,N-1,\\
\lambda(\pi-\Delta_N)\ge0, & i=N.
\end{cases}
\end{equation}

It remains to consider the upper boundary $\Delta_N=\pi$. Within $\mathcal C$, $\Delta_N\ge\Delta_{N-1}$. Since $f$ is decreasing and nonpositive and $u_N\ge u_{N-1}$,
\begin{equation*}
\Psi_N=u_Nf(\Delta_N)
\le u_{N-1}f(\Delta_{N-1})=\Psi_{N-1}.
\end{equation*}
Consequently, whenever $\Delta_N=\pi$,
\begin{equation}
\label{eq:threshold_upper_boundary}
\Delta_N'=\Psi_N-\Psi_{N-1}\le0.
\end{equation}

The set $\mathcal C$, defined by finitely many linear inequalities, is a closed convex polyhedron. By Nagumo's theorem \cite{nagumo}, or its polyhedral characterization in Theorem~3.7 of \cite{song}, its invariance follows if (i) the backward-time dynamics have a unique solution and (ii) the vector field satisfies the tangent condition on every active boundary face. The latter has been verified above: on $\Delta_1=0$, \eqref{eq:threshold_backward_marginal_system} gives $\Delta_1'=\lambda\Delta_2\ge0$; on $\Delta_i-\Delta_{i-1}=0$, \eqref{eq:threshold_gap_boundary} gives $\delta_i'\ge0$; and on $\Delta_N=\pi$, \eqref{eq:threshold_upper_boundary} gives $\Delta_N'\le0$.

Moreover, the function $f(x)=\min\{0,e-x\}$ defined above is $1$-Lipschitz. Since $\Psi_i=u_i f(\Delta_i)$ and all the other terms in \eqref{eq:threshold_backward_marginal_system} are linear in the marginal costs, the right-hand side of \eqref{eq:threshold_backward_marginal_system} is Lipschitz in $(\Delta_1,\ldots,\Delta_N)$ on each continuity interval of $e$ and $\lambda$. The Picard--Lindel\"of theorem therefore guarantees a unique solution to the backward-time system for any given initial condition. Since $\Delta_i(0)=0$ for all $i$, the backward-time trajectory starts in $\mathcal C$. Nagumo's theorem therefore implies that the trajectory remains in $\mathcal C$.
Thus,
\begin{equation}
0\le\Delta_1(t)\le\Delta_2(t)\le\cdots
\le\Delta_N(t)\le\pi,
\quad t\in[0,T].
\end{equation}
This argument applies on every interval between consecutive discontinuity points of $e$ and $\lambda$. The value function, and hence its marginal differences, is continuous at these points, so the inequalities are preserved when the intervalwise solutions are joined.

We next strengthen the upper bound for $\Delta_N$. The weak ordering and the argument above imply $\Psi_N\le\Psi_{N-1}$. Hence, in backward time,
\begin{equation}
\Delta_N'\le\lambda(\pi-\Delta_N).
\end{equation}
Since $\Delta_N(0)=0$, the scalar comparison theorem gives, in the original time variable,
\begin{equation}
\label{eq:threshold_delta_N_upper_bound}
\begin{aligned}
\Delta_N(t)
&\le \pi\left(1-\exp\left\{-\int_t^T\lambda(s)\,ds\right\}\right)<\pi
\end{aligned}
\end{equation}
for $t<T$, where the strict inequality follows from $\inf_{s\in[0,T]}\lambda(s)>0$.

We finally prove the strict ordering. Continue to work in backward time. If $\delta_N(\tau_0)=0$ for some $\tau_0>0$, then the nonnegative function $\delta_N$ attains its minimum at $\tau_0$. However,
(\ref{eq:threshold_psi_second}),
(\ref{eq:threshold_backward_gap_system}), and
(\ref{eq:threshold_delta_N_upper_bound}) imply
\begin{equation}
\delta_N'(\tau_0)
\ge\lambda(\tau_0)\bigl(\pi-\Delta_N(\tau_0)\bigr)>0,
\end{equation}
contradicting the first-order condition at an interior minimum. If $\tau_0$ is a discontinuity point of the primitives or the endpoint of an interval, the same contradiction follows from the left derivative: at a minimum it is nonpositive, whereas the backward dynamics make it strictly positive. Therefore,
\begin{equation}
\delta_N(\tau)>0,
\qquad \tau>0.
\end{equation}

Now suppose that $\delta_{i+1}(\tau)>0$ for every $\tau>0$, where $i\in\{2,\ldots,N-1\}$. If $\delta_i(\tau_0)=0$ for some $\tau_0>0$, then (\ref{eq:threshold_psi_second}) and (\ref{eq:threshold_backward_gap_system}) give
\begin{equation*}
\delta_i'(\tau_0)
\ge\lambda(\tau_0)\delta_{i+1}(\tau_0)>0,
\end{equation*}
which yields the same contradiction. Backward induction over $i=N-1,N-2,\ldots,2$ therefore gives
\begin{equation}
\delta_i(t)>0,
\qquad i=2,\ldots,N,\quad t<T.
\end{equation}

Returning to the original time variable, we obtain
\begin{equation}
0\le\Delta_1(t)<\Delta_2(t)<\cdots<\Delta_N(t)<\pi,
\quad t\in[0,T),
\end{equation}
whereas $\Delta_i(T)=0$ for every $i=1,\ldots,N$. Therefore, the value function is discretely convex in $i$, and strictly so for $t<T$.
\end{proof}

%

\section{Proof of Lemma \ref{lem:lower_bound_D}}
\label{app:time-monotonicity}

\begin{proof}
Let $u_i:=i\wedge K$ and $\Psi_i(t):=u_i\min\{0,e(t)-\Delta_i(t)\}$, with $u_0:=0$ and $\Psi_0(t):=0$. Recall that $\delta_i(t):=\Delta_i(t)-\Delta_{i-1}(t)$. Taking adjacent differences in \eqref{eq:threshold_marginal_system} gives
\begin{equation}
\label{eq:delta_gap_dynamics}
\begin{aligned}
-\dot\delta_i(t)&=\lambda(t)\bigl(\delta_{i+1}(t)-\delta_i(t)\bigr)+\Psi_i(t)-2\Psi_{i-1}(t)+\Psi_{i-2}(t),\qquad i=2,\ldots,N-1,\\
-\dot\delta_N(t)&=\lambda(t)\bigl(\pi-\Delta_N(t)-\delta_N(t)\bigr)+\Psi_N(t)-2\Psi_{N-1}(t)+\Psi_{N-2}(t),
\end{aligned}
\end{equation}
with $\delta_i(T)=0$. Fix $t$ and let $f_t(x):=\min\{0,e(t)-x\}$. Since $f_t$ is decreasing and $1$-Lipschitz, Lemma \ref{lem:monotonicity} implies $f_t(\Delta_i(t))-f_t(\Delta_{i-1}(t))\ge-\delta_i(t)$ and $f_t(\Delta_{i-2}(t))
\ge f_t(\Delta_{i-1}(t))$. Therefore,
\begin{equation}
\label{ieq:psi_1}
\begin{aligned}
\Psi_i(t)-\Psi_{i-1}(t)
\mkern-3mu\ge\mkern-3mu 
-u_i\delta_i(t)+(u_i-u_{i-1})f_t(\Delta_{i-1}(t)),
\end{aligned}
\end{equation}
and
\begin{equation}
\label{ieq:psi_2}
\begin{aligned}
\Psi_{i-2}(t)-\Psi_{i-1}(t)
\ge-(u_{i-1}-u_{i-2}) f_t(\Delta_{i-1}(t)).
\end{aligned}
\end{equation}
Inequality \eqref{ieq:psi_2} also holds for $i=2$ because $u_0=\Psi_0(t)=0$. Since $u_i=i\wedge K$ is discretely concave and $f_t\le0$, we obtain from \eqref{ieq:psi_1} and \eqref{ieq:psi_2}
\begin{equation}
\label{eq:psi_second_lower}
\begin{aligned}
\Psi_i(t)-2\Psi_{i-1}(t)+\Psi_{i-2}(t)
&\ge-u_i\delta_i(t),\qquad i=2,\ldots,N.
\end{aligned}
\end{equation}

Combining \eqref{eq:delta_gap_dynamics} and \eqref{eq:psi_second_lower} yields
\begin{equation}
\label{eq:delta_recursive_differential}
\begin{aligned}
-\dot\delta_i(t)+\bigl(\lambda(t)+u_i\bigr)\delta_i(t)
&\ge\lambda(t)\delta_{i+1}(t),\qquad i=2,\ldots,N-1,
\end{aligned}
\end{equation}
and
\begin{equation}
\label{eq:delta_N_differential}
\begin{aligned}
-\dot\delta_N(t)+\bigl(\lambda(t)+u_N\bigr)\delta_N(t)
\ge\lambda(t)\bigl(\pi-\Delta_N(t)\bigr).
\end{aligned}
\end{equation}
By \eqref{eq:threshold_delta_N_upper_bound},
\begin{equation}
\pi-\Delta_N(t)\ge\pi e^{-\Lambda(t,T)}.
\end{equation}
Using $\delta_N(T)=0$ and applying the integrating-factor formula to \eqref{eq:delta_N_differential}, we obtain
\begin{equation}
\begin{aligned}
\delta_N(t) &\ge\int_t^T e^{-\Lambda(t,s)-u_N(s-t)}\lambda(s) \pi e^{-\Lambda(s,T)}\,ds=\underline{\widehat\delta}_N(t).
\label{eq:delta_recursive_integralN}
\end{aligned}
\end{equation}
Similarly, \eqref{eq:delta_recursive_differential} implies
\begin{equation}
\label{eq:delta_recursive_integral}
\begin{aligned}
\delta_i(t)
\ge\int_t^T e^{-\Lambda(t,s)-u_i(s-t)}
\lambda(s)\delta_{i+1}(s)\,ds,\qquad i=2,\ldots,N-1.
\end{aligned}
\end{equation}
Starting with the bound for $\delta_N$ and applying \eqref{eq:delta_recursive_integral} recursively gives
\begin{equation}
\label{eq:delta_recursive_domination}
\delta_i(t) \ge \underline{\widehat\delta}_i(t), \qquad i=2,\ldots,N,
\end{equation}
and
\begin{equation}
\label{eq:underline_delta_nested}
\begin{aligned}
\underline{\widehat\delta}_i(t)
=\pi e^{-\Lambda(t,T)}
\int_{t\le s_i\le\cdots\le s_N\le T}
e^{-\sum_{m=i}^N u_m(s_m-s_{m-1})}\prod_{m=i}^N\lambda(s_m)\,ds_N\cdots ds_i,
\end{aligned}
\end{equation}
where $s_{i-1}:=t$. These inequalities hold on each continuity interval of $e$ and $\lambda$. Since the marginal costs are continuous at their finitely many discontinuity points, the intervalwise inequalities combine to give \eqref{eq:delta_recursive_domination} on all of $[0,T]$.
Since $u_m\le K$,
\begin{equation}
\begin{aligned}
\sum_{m=i}^N u_m(s_m-s_{m-1})
&\le K(s_N-t)\le K(T-t).
\end{aligned}
\end{equation}
Moreover, the simplex integral identity gives
\begin{equation}
\begin{aligned}
&\int_{t\le s_i\le\cdots\le s_N\le T}
\prod_{m=i}^N\lambda(s_m)\,ds_N\cdots ds_i \mkern-3mu
=\frac{\Lambda(t,T)^{N-i+1}}{(N-i+1)!}.
\end{aligned}
\end{equation}
It follows from \eqref{eq:underline_delta_nested} that
\begin{equation}
\label{eq:underline_delta_individual}
\begin{aligned}
\underline{\widehat\delta}_i(t)\ge\pi e^{-\Lambda(t,T)}e^{-K(T-t)}\frac{\Lambda(t,T)^{N-i+1}}{(N-i+1)!}=\underline\delta_i(t).
\end{aligned}
\end{equation}

As $i$ ranges from $2$ to $N$, $n:=N-i+1$ ranges from $1$ to $N-1$. The ratios of consecutive terms in the sequence $\{\Lambda(t,T)^n/n!\}_{n=1}^{N-1}$ are $\frac{\Lambda(t,T)^{n+1}/(n+1)!}{\Lambda(t,T)^n/n!}=\frac{\Lambda(t,T)}{n+1}$, which are non-increasing in $n$. Hence, the sequence first increases and then decreases, so its minimum is attained at one of the endpoints. Therefore,
\begin{equation}
\begin{aligned}
\min_{1\le n\le N-1}\frac{\Lambda(t,T)^n}{n!}
=\min\left\{\Lambda(t,T),
\frac{\Lambda(t,T)^{N-1}}{(N-1)!}\right\}.
\end{aligned}
\end{equation}
Combining this identity with \eqref{eq:delta_recursive_domination} and \eqref{eq:underline_delta_individual} proves \eqref{deltalb00}.
\end{proof}

\bsk

\section{Proof of Proposition \ref{pro:capacity_marginal_cost}}
\label{app:capacity_control_interaction}

\begin{proof}
For a system with capacities $n$ and $k$, write
\begin{equation}
\label{eq:capacity_psi}
\begin{aligned}
u_i^k&:=i\wedge k,\qquad \Psi_i^{n,k}(t):=u_i^k\min\{0,e(t)-\Delta_i^{n,k}(t)\},
\mkern10mu i=1,\ldots,n,
\end{aligned}
\end{equation}
with $\Psi_0^{n,k}(t):=0$.

We first fix $K$ and compare systems with $N$ and $N+1$ units. Define
\begin{equation}
\label{eq:capacity_unit_difference}
\begin{aligned}
\delta_i^{N,K}(t)
&:=\Delta_i^{N,K}(t)-\Delta_i^{N+1,K}(t),\qquad i=1,\ldots,N.
\end{aligned}
\end{equation}
Suppressing the argument $t$, subtracting the two marginal-cost systems gives
\begin{equation}
\label{eq:capacity_unit_difference_dynamics}
\begin{aligned}
-\dot\delta_i^{N,K}&=\lambda(t)\bigl(\delta_{i+1}^{N,K}-\delta_i^{N,K}\bigr)+\bigl(\Psi_i^{N,K}-\Psi_i^{N+1,K}\bigr)
-\bigl(\Psi_{i-1}^{N,K}-\Psi_{i-1}^{N+1,K}\bigr),\quad i=1,\ldots,N-1,\\[1mm]
-\dot\delta_N^{N,K}&=\lambda(t)\bigl(\pi-\Delta_{N+1}^{N+1,K}
-\delta_N^{N,K}\bigr)+\bigl(\Psi_N^{N,K}-\Psi_N^{N+1,K}\bigr)
-\bigl(\Psi_{N-1}^{N,K}-\Psi_{N-1}^{N+1,K}\bigr).
\end{aligned}
\end{equation}
The terminal conditions imply $\delta_i^{N,K}(T)=0$ for every $i$.

Consider the reversed-time dynamics at a boundary point where
$\delta_i^{N,K}=0$ and $\delta_j^{N,K}\ge0$ for every $j$. By
\eqref{eq:capacity_unit_difference},
$\Delta_i^{N,K}=\Delta_i^{N+1,K}$, and hence $\Psi_i^{N,K}=\Psi_i^{N+1,K}$. Moreover, $\delta_{i-1}^{N,K}\ge0$ and the map
$z\mapsto\min\{0,e(t)-z\}$ is non-increasing, so $\Psi_{i-1}^{N,K}-\Psi_{i-1}^{N+1,K}\le0$. For $i=1$, the latter difference is zero by the convention
$\Psi_0^{n,k}=0$. Therefore, if $i<N$, the right-hand side of
\eqref{eq:capacity_unit_difference_dynamics} is at least
$\lambda(t)\delta_{i+1}^{N,K}\ge0$.
If $i=N$, it is at least $\lambda(t)\bigl(\pi-\Delta_{N+1}^{N+1,K}(t)\bigr)\ge0$, where the last inequality follows from Lemma \ref{lem:monotonicity}.

Thus, the tangent condition holds on every boundary face of the nonnegative orthant. Since the vector field is Lipschitz, Nagumo's theorem, applied successively on the continuity intervals of $e$ and $\lambda$, gives
\begin{equation}
\label{eq:capacity_unit_comparison}
\begin{aligned}
\delta_i^{N,K}(t)&\ge0,\\
\Delta_i^{N+1,K}(t)&\le\Delta_i^{N,K}(t),
\qquad i=1,\ldots,N.
\end{aligned}
\end{equation}
The marginal costs are continuous at the finitely many discontinuity points, so \eqref{eq:capacity_unit_comparison} holds on all of $[0,T]$.

We next fix $N$ and compare systems with $K$ and $K+1$ servers. Define
\begin{equation}
\label{eq:capacity_server_difference}
\begin{aligned}
\widehat\delta_i^{N,K}(t)
&:=\Delta_i^{N,K}(t)-\Delta_i^{N,K+1}(t),i=1,\ldots,N.
\end{aligned}
\end{equation}
Subtracting the corresponding marginal-cost systems gives
\begin{equation}
\label{eq:capacity_server_difference_dynamics}
\begin{aligned}
&-\dot{\widehat\delta}_i^{N,K}
=\lambda(t)\bigl(\widehat\delta_{i+1}^{N,K}
-\widehat\delta_i^{N,K}\bigr)+\bigl(\Psi_i^{N,K}-\Psi_i^{N,K+1}\bigr)-\bigl(\Psi_{i-1}^{N,K}-\Psi_{i-1}^{N,K+1}\bigr),\quad i=1,\ldots,N-1,\\[1mm]
&-\dot{\widehat\delta}_N^{N,K}
=-\lambda(t)\widehat\delta_N^{N,K}+\bigl(\Psi_N^{N,K}-\Psi_N^{N,K+1}\bigr)-\bigl(\Psi_{N-1}^{N,K}-\Psi_{N-1}^{N,K+1}\bigr).
\end{aligned}
\end{equation}
Again, $\widehat\delta_i^{N,K}(T)=0$ for every $i$.

Consider a boundary point where
$\widehat\delta_i^{N,K}=0$ and
$\widehat\delta_j^{N,K}\ge0$ for every $j$. It suffices to show that $\Psi_i^{N,K}-\Psi_i^{N,K+1}\ge\Psi_{i-1}^{N,K}-\Psi_{i-1}^{N,K+1}$. Write $f_t(z):=\min\{0,e(t)-z\}$. If $i\le K$, the feasible maximum actions coincide under the two server capacities, so
\begin{equation}
\begin{aligned}
\Psi_i^{N,K}-\Psi_i^{N,K+1}&=0,\\
\Psi_{i-1}^{N,K}-\Psi_{i-1}^{N,K+1}&\le0.
\end{aligned}
\end{equation}
If $i=K+1$, then
\begin{equation}
\begin{aligned}
\Psi_i^{N,K}-\Psi_i^{N,K+1}
&=-f_t\bigl(\Delta_i^{N,K}\bigr)\ge0,\\
\Psi_{i-1}^{N,K}-\Psi_{i-1}^{N,K+1}
&\le0.
\end{aligned}
\end{equation}

Finally, suppose $i>K+1$. By Lemma \ref{lem:monotonicity} and
$\widehat\delta_{i-1}^{N,K}\ge0$, we have
\begin{equation*}
\Delta_{i-1}^{N,K+1}
\le\Delta_{i-1}^{N,K}
\le\Delta_i^{N,K}
=\Delta_i^{N,K+1}.
\end{equation*}
Since $f_t$ is non-increasing and nonpositive,
\begin{equation}
\begin{aligned}
\Psi_{i-1}^{N,K}-\Psi_{i-1}^{N,K+1}&=Kf_t\bigl(\Delta_{i-1}^{N,K}\bigr)-(K+1)f_t\bigl(\Delta_{i-1}^{N,K+1}\bigr)\\
&\le-f_t\bigl(\Delta_{i-1}^{N,K}\bigr)\le-f_t\bigl(\Delta_i^{N,K}\bigr)=\Psi_i^{N,K}-\Psi_i^{N,K+1}.
\end{aligned}
\end{equation}
Hence, the reversed-time derivative in
\eqref{eq:capacity_server_difference_dynamics} is nonnegative on every boundary face. Applying Nagumo's theorem successively on the continuity intervals and using continuity at the discontinuity points gives
\begin{equation}
\label{eq:capacity_server_comparison}
\begin{aligned}
\widehat\delta_i^{N,K}(t)&\ge0,\\
\Delta_i^{N,K+1}(t)&\le\Delta_i^{N,K}(t),
\qquad i=1,\ldots,N.
\end{aligned}
\end{equation}
\end{proof}


%

\section{Proofs for \S \ref{sec:cyclic_model}}
\label{app:cross_cycle}

\begin{proof}[Proof of Theorem \ref{theo:cross_cycle}.]
For $m=1,\ldots,M$, define $\Delta_i^m(s):=\Delta_i((m-1)L+s)$, for $s\in[0,L]$ and $i=1,\ldots,N$. Under \eqref{eq:cyclic_parameters}, these functions satisfy the same marginal-cost HJB equations within every cycle.

We first show that an ordering at the end of a cycle is preserved backward throughout the cycle. Consider two functions $x=(x_1,\ldots,x_N)$ and $y=(y_1,\ldots,y_N)$ satisfying the same marginal-cost HJB equations on $[0,L]$, and suppose that $x_i(L)\geq y_i(L)$ for every $i$. Let $u_i:=i\wedge K$, $u_0:=0$, and define $f_i(s,z):=u_i\min\{0,\widetilde e(s)-z\}$ and $f_0:=0$. Their HJB equations can be written as
\begin{equation}
\label{eq:cross_cycle_hjb}
\begin{aligned}
-\dot x_i(s)
=\widetilde\lambda(s)\bigl(\pi\mathbf 1_{\{i=N\}}+x_{i+1}(s)-x_i(s)\bigr)+f_i(s,x_i(s))-f_{i-1}(s,x_{i-1}(s)),
\end{aligned}
\end{equation}
and analogously for $y$.

Reverse time by defining $R_i(\tau):=x_i(L-\tau)-y_i(L-\tau)$. Consider a boundary point of the nonnegative orthant at which $R_i(\tau)=0$ and $R_j(\tau)\geq0$ for every $j$. Setting $s=L-\tau$ and subtracting the two equations in \eqref{eq:cross_cycle_hjb} gives
\begin{equation}
\label{eq:cross_cycle_boundary}
\begin{aligned}
\dot R_i(\tau)
=\widetilde\lambda(s)R_{i+1}(\tau)-\Bigl[f_{i-1}(s,x_{i-1}(s))-f_{i-1}(s,y_{i-1}(s))\Bigr]\ge0.
\end{aligned}
\end{equation}
Indeed, $R_{i+1}(\tau)\geq0$, while $f_{i-1}(s,\cdot)$ is non-increasing, so $R_{i-1}(\tau)\geq0$ implies $f_{i-1}(s,x_{i-1}(s))\leq f_{i-1}(s,y_{i-1}(s))$. The vector field is Lipschitz in the marginal costs on every continuity interval of $\widetilde\lambda$ and $\widetilde e$. Nagumo's theorem therefore implies that $R_i(\tau)\geq0$ for every $i$ and $\tau\in[0,L]$. Applying this argument successively across the finitely many discontinuity points, at which the marginal costs remain continuous, shows that
\begin{equation}
\label{eq:cross_cycle_comparison}
\begin{aligned}
x_i(L)\geq y_i(L)\quad\text{for all }i\qquad\Longrightarrow\quad
x_i(s)\geq y_i(s) \quad\text{for all }i\text{ and }s\in[0,L].
\end{aligned}
\end{equation}

We now apply \eqref{eq:cross_cycle_comparison} recursively. For the last two cycles, the intercycle matching condition, Lemma~\ref{lem:monotonicity}, and the terminal condition give
\begin{equation}
\Delta_i^{M-1}(L)=\Delta_i^M(0)
\geq0=\Delta_i^M(L).
\end{equation}
Hence, \eqref{eq:cross_cycle_comparison} yields $\Delta_i^{M-1}(s)\geq\Delta_i^M(s)$ for every $i$ and $s\in[0,L]$.

Proceeding backward, suppose that $\Delta_i^m(s)\geq\Delta_i^{m+1}(s)$ for every $i$ and $s\in[0,L]$. The intercycle matching conditions imply
\begin{equation}
\Delta_i^{m-1}(L)=\Delta_i^m(0)
\geq\Delta_i^{m+1}(0)=\Delta_i^m(L).
\end{equation}
Another application of \eqref{eq:cross_cycle_comparison} gives $\Delta_i^{m-1}(s)\geq\Delta_i^m(s)$ throughout the preceding cycle. Repeating this argument proves
\begin{equation}
\begin{aligned}
\Delta_i^m&(s)\geq\Delta_i^{m+1}(s),\qquad m=1,\ldots,M-1,
\quad i=1,\ldots,N,
\quad s\in[0,L],
\end{aligned}
\end{equation}
which is equivalent to \eqref{eq:cross_cycle_marginal_cost}.

Finally, \eqref{eq:cyclic_parameters} gives $e((m-1)L+s)=e(mL+s)=\widetilde e(s)$ for $s\in[0,L)$. Therefore, $\Delta_i(mL+s)>e(mL+s)$ implies $\Delta_i((m-1)L+s)>e((m-1)L+s)$, and hence
\begin{equation}
\mathcal I_{(m-1)L+s}\supseteq\mathcal I_{mL+s}.
\end{equation}
Taking the smallest elements under the convention for an empty activation set yields $i_{(m-1)L+s}^{\min}\leq i_{mL+s}^{\min}$.
\end{proof}

\hsk

\begin{proof}[Proof of Proposition~\ref{prop:peak_offpeak}.]
By \eqref{eq:threshold_delta_N_upper_bound},
\begin{equation}
\Delta_N(t)\leq
\pi\bigl(1-e^{-\Lambda(t,T)}\bigr).
\end{equation}
The right-hand side is decreasing in $t$. Combining this bound with
the first condition in \eqref{eq:peak_offpeak_condition} and
Lemma~\ref{lem:monotonicity} gives
\begin{equation}
\label{eq:app-peak-shutdown-bound}
\begin{aligned}
\Delta_i(t)
&\leq\Delta_N(t)\leq\pi\left(1-e^{-\Lambda(t,T)}\right)
\leq e_{\mathrm H},\qquad i=1,\ldots,N,
\quad t\in\{(m-1)L,mL\}.
\end{aligned}
\end{equation}
At $t=(m-1)L$ and $t=mL$, respectively,
\eqref{eq:app-peak-shutdown-bound} implies that the activation threshold is $N+1$. Because $e(t)=e_{\mathrm H}$ on $H_m$ and $H_{m+1}$, Corollary~\ref{cor:increasing_operating_cost} implies that the threshold remains $N+1$ throughout both intervals. Hence, Proposition~\ref{pro:optimal_control} gives $u^*(t,i)=0$ for every $i=0,\ldots,N$ and $t\in H_m\cup H_{m+1}$.

It remains to establish full activation on $L_m$. Let $A$ be the number of arrivals during $H_{m+1}$. Because arrivals form a nonhomogeneous Poisson process,
\begin{equation*}
A\sim\operatorname{Poisson}(\Lambda_{\mathrm H}),
\qquad
\Lambda_{\mathrm H}=\Lambda(mL,(m+\rho)L).
\end{equation*}

Fix $i\in\{1,\ldots,N\}$, and couple two systems starting at time $mL$ in states $i$ and $i-1$ under the same arrival process. Because replenishment is inactive throughout $H_{m+1}$, the difference between their numbers of lost demands is
\begin{equation}
\label{eq:app-peak-lost-demand-difference}
\begin{aligned}
\left[A-(N-i)\right]^+
-\left[A-(N-i+1)\right]^+=\mathbf 1\{A\geq N-i+1\}.
\end{aligned}
\end{equation}
Conditional on $A=k\leq N-i$, the two systems enter time
$(m+\rho)L$ in states $i+k$ and $i+k-1$, respectively. If $A\geq N-i+1$, both systems enter that time in state $N$. Consequently, \eqref{eq:app-peak-lost-demand-difference} yields
\begin{equation}
\label{eq:app-peak-marginal-decomposition}
\begin{aligned}
\Delta_i(mL)=\pi\Pr\{A\geq N-i+1\}+\sum_{k=0}^{N-i}\Pr\{A=k\}
\Delta_{i+k}\bigl((m+\rho)L\bigr).
\end{aligned}
\end{equation}
The marginal continuation costs are nonnegative. Setting $i=1$ in
\eqref{eq:app-peak-marginal-decomposition} therefore gives
\begin{equation}
\label{eq:app-peak-smallest-marginal-bound}
\begin{aligned}
\Delta_1(mL)
\geq \pi\Pr\{A\geq N\}=\pi\left[1-e^{-\Lambda_{\mathrm H}}
\sum_{k=0}^{N-1}
\frac{\Lambda_{\mathrm H}^{k}}{k!}\right].
\end{aligned}
\end{equation}
The second condition in \eqref{eq:peak_offpeak_condition}, together with
\eqref{eq:app-peak-smallest-marginal-bound}, implies $e_{\mathrm L}<\Delta_1(mL)$. By continuity, $\Delta_1(mL^-)=\Delta_1(mL)>e_{\mathrm L}$, and Lemma~\ref{lem:monotonicity} then gives $\Delta_i(mL^-)>e_{\mathrm L}$ for every $i=1,\ldots,N$. Hence, $i_{mL^-}^{\min}=1$.

Finally, Corollary~\ref{cor:increasing_operating_cost} implies that the threshold is increasing on $L_m$. Thus,
\begin{equation*}
1\leq i_t^{\min}\leq i_{mL^-}^{\min}=1, \qquad t\in L_m,
\end{equation*}
so $i_t^{\min}=1$ throughout $L_m$. Proposition~\ref{pro:optimal_control} therefore gives $u^*(t,i)=\min\{K,i\}$ for every $i=0,\ldots,N$ and $t\in L_m$. Combining this result with shutdown on $H_m$ proves the proposition.
\end{proof}

\end{document}